\numberwithin{equation}{section}
\newtheorem{theorem}{Theorem}[section]
\newtheorem{proposition}[theorem]{Proposition}
\newtheorem{lemma}[theorem]{Lemma}
\newtheorem{assumption}[theorem]{Assumption}
\newtheorem{definition}[theorem]{Definition}
\newtheorem{remark}[theorem]{Remark}
\newtheorem{notation}[theorem]{Notation}
\DeclareMathOperator{\supp}{supp}
\newcommand{\R}{\mathbb{R}}
\newcommand{\N}{\mathbb{N}}
\newcommand{\Dom}{\mathsf {Dom}}
\newcommand{\rig}{{\omega}}
\newcommand{\righ}{{\rm r}}
\newcommand{\lef}{{\rm \ell }}
\title[Robin Laplacian]{Tunneling for the Robin Laplacian\\
in smooth planar domains}
\author[B. Helffer]{Bernard Helffer}
\address[B. Helffer]{Laboratoire de Math\'ematiques d'Orsay, Univ. Paris-Sud, CNRS, Universit\'e 
Paris-Saclay, 91405 Orsay, France and Laboratoire Jean Leray, Universit\'e de Nantes, France.}
\email{bernard.helffer@math.u-psud.fr}
\author[A. Kachmar]{Ayman Kachmar}
\address[A. Kachmar]{Lebanese University, Department of Mathematics, Hadath, Lebanon.}
\email{ayman.kashmar@gmail.com}
\author[N. Raymond]{Nicolas Raymond}
\address[N. Raymond]{IRMAR - UMR6625, Universit\'e Rennes 1, CNRS, Campus de Beaulieu, F-35042 Rennes cedex, France}
\email{nicolas.raymond@univ-rennes1.fr}
\begin{document}

\maketitle
\begin{abstract}
We study the low-lying eigenvalues of the semiclassical Robin Laplacian in a smooth planar  domain with bounded boundary which is symmetric with respect to an axis. In the case when the curvature of the boundary of the domain attains its maximum at exactly two points away from the axis of symmetry, we establish an explicit asymptotic formula for the splitting of the first two eigenvalues. This is a rigorous derivation of the semiclassical tunneling effect induced by the domain's geometry. Our approach is close to the Born-Oppenheimer one and  yields, as a byproduct,  a Weyl formula of independent interest.
\end{abstract}
\maketitle
\section{Introduction}

The spectral theory of the Robin Laplacian has attracted a lot of interest in the last years, especially in the strong coupling regime or, equivalently, in the semiclassical limit. Many authors have been interested in the asymptotic estimate of the bound states of this operator. The Robin Laplacian actually shares common features with the electro-magnetic Laplacian, the Dirichlet Laplacian on waveguides or $\delta$ type perturbations of the Laplacian. These operators are often used to describe the physical properties of nanostructures (see for instance the review \cite{Ex08}).

In all these situations, numerous articles have revealed the role of the curvature  in the creation of eigenvalues or in the localization of the eigenfunctions.  At some point, the case with Robin boundary conditions may also recall the boundary attraction that occurs for the magnetic Neumann Laplacian (and that is related to the surface superconductivity). At the scale of nanostructures the symmetries are known to induce a tunneling effect. This paper aims at quantifying this effect for bidimensional structures described by the Robin Laplacian on a smooth domain.

\subsection{Definition of the operator}
Let $\Omega\subset\R^{2}$ be an open domain with boundary $\Gamma=\partial\Omega$. We will work under various assumptions on the domain $\Omega$. First, we  consider the following two assumptions.
\begin{assumption}\label{hyp.main1}
$\Omega$ is smooth with a bounded, regular boundary.
\end{assumption}
As examples we can think of bounded domains (convex sets, annuli) or unbounded domains like the complementary of a bounded convex closed set.

\begin{assumption}\label{hyp.main2}
The curvature $\kappa$ on the  boundary $\Gamma$ attains its maximum $\kappa_{\max}$ at a finite number $N$  of points on $\Gamma$ and these maxima are non degenerate.
\end{assumption}
In the case when $N=2$ in Assumption~\ref{hyp.main2}, we will carry out a  refined analysis valid under the following stronger (geometric) assumption:
\begin{assumption}\label{hyp.main3}~
\begin{enumerate}[\rm i)]
\item $\Omega$ is symmetric with respect to the $y$-axis.
\item The curvature $\kappa$ on the  boundary $\Gamma$ attains its maximum at exactly two points $a_1$ and $a_2$ which are not on the symmetry axis and belong to the same connected component of the boundary. We write
\[a_1=(a_{1,1},a_{1,2})\in\Gamma\quad\mbox{ and }\quad a_2=(a_{2,1}, a_{2,2})\in\Gamma\,,\]
such that $a_{1,1}>0$ and $a_{2,1}<0\,$.
\item  The second derivative of the curvature (w.r.t. arc-length) at $a_1$ and $a_2$ is negative.
\end{enumerate}
\end{assumption}
A simple example of a domain satisfying all the assumptions  is the full ellipse
\[\left\{(x,y)~:~\frac{x^2}{a^2}+\frac{y^2}{b^2} <1\right\}\,,\mbox{ with } 0<b<a\,.\]
The two points in the boundary of maximal curvature are $(\pm a,0)$.
The second example  is  the complementary:
\[\left\{(x,y)~:~\frac{x^2}{a^2}+\frac{y^2}{b^2} >1\right\}\,,\mbox{ with } 0<a <b\,.\]
The two points in the boundary of maximal curvature are $(\pm a,0)$.

This paper is devoted to the semiclassical analysis of the operator
\begin{equation}\label{eq:Lh} \mathcal
L_h=-h^2\Delta\,,\end{equation} with domain
\begin{equation}\label{eq:Lh-dom}
\Dom(\mathcal L_h)=\{u\in H^2(\Omega)~:~
{\nu\cdot h^{\frac{1}{2}}\nabla u-u=0}
{\rm ~on~}\Gamma\}\,,
\end{equation}
where $\nu$ is the {\it outward} pointing normal and $h>0$ is the semiclassical parameter.

The associated quadratic form is given by
\[\forall u\in H^1(\Omega)\,,\quad \mathcal Q_{h}(u)=\int_\Omega|h\nabla u|^ 2 dx-h^{\frac{3}{2}}\int_{\Gamma}|u|^2\,ds(x)\,,\]
where $ds$ is the standard surface measure on the boundary.

Let $(\mu_n(h))$ be the sequence of the Rayleigh quotients  of the operator $\mathcal L_h$.  It is known (see \cite{HK-tams, H, PP-eh}) that the  bottom of the essential spectrum (if any) is non negative and this implies that, for all $n\in\mathbb{N}$, $\mu_{n}(h)$ belongs to the discrete spectrum as soon as $h$ is small enough and that it is precisely the $n$-th eigenvalue of $\mathcal L_h$ counting multiplicities.

The goal of this paper is to analyze the low-lying eigenvalues of the operator $\mathcal L_h$ in the semiclassical regime as $h\to 0$.
The semiclassical analysis of the operator $\mathcal L_h$  naturally arises  from the analysis of the Robin Laplacian with a large negative parameter $\alpha$,
\[\Big(-\Delta^{R_\alpha},\Dom(-\Delta^{R_\alpha})\Big)\quad{\rm where}\quad \Dom(-\Delta^{R_\alpha})=\{u\in H^2(\Omega)~:~\nu\cdot\nabla u+\alpha u=0{\rm ~on~}\Gamma\}\,,\]
which has received a lot of attention  (cf. \cite{EMP, LP, Pan, HK-tams, PanP}). The operator $-\Delta^{R_\alpha}$ arises in several contexts, the long-time dynamics in a
reaction-diffusion process \cite{LOS}, and the critical temperature for enhanced surface superconductivity \cite{GS}.

Putting $\alpha = - h^{-\frac 12}$, we observe that $\alpha\to-\infty$ as $h\to0$ and the relation between the operators $\mathcal L_h$ and $-\Delta^{R_\alpha}$ is displayed as follows
\[\sigma(-\Delta^{R_\alpha})=h^{-2}\sigma(\mathcal L_h)\,.\]

\subsection{Known results}
In this subsection, we recall the state of the art for this Robin problem, especially  the spectral reduction of the operator $\mathcal L_h$ to an effective Hamiltonian on the boundary $\Gamma$. We will review an old result for the double well problem and apply it on the effective Hamiltonian.

\subsubsection{About the semiclassical Robin Laplacian}
As a consequence of the results in \cite{HK-tams, H, PP-eh} we have the following theorem.
\begin{theorem}\label{thm:old1}
Under Assumptions \ref{hyp.main1}-\ref{hyp.main2}  and suppose that, among the maximal points of $\kappa$, there are exactly $M$ points $(a_{j})_{j\in\{1,\ldots, M\}}$ where $\kappa''$ is maximal, then there exist  a function
$h\mapsto\epsilon(h)\in(0,\infty)$ such that
\[\lim_{h\to0_+}\epsilon(h)=0\,,\]
and an interval
\begin{equation}\label{eq:sp-int}
I_h=\big]-h-\kappa_{\max}h^{3/2}+\gamma\,h^{7/4}-h^{7/4}\epsilon(h), -h-\kappa_{\max}h^{3/2}+\gamma\,h^{7/4}+h^{7/4}\epsilon(h)\big[\,,\quad \gamma=\sqrt{\frac{-\kappa''(a_{1})}2}\,,
\end{equation}
such that, for $h$ small enough,
\[\sigma(\mathcal L_h)\cap I_h=\{\mu_1(h),\mu_2(h),\ldots, \mu_M (h)\}\,,\]
and
\[\mu_{M+1}(h)=-h-\kappa_{\max}h^{3/2}+\hat\gamma\,h^{7/4}+o(h^{7/4})\,,\qquad \hat\gamma=\min \left(3\gamma, \min_{j=M+1,\ldots, N}\sqrt{\frac{-\kappa''(a_{j})}2} \right)\,.\]
\end{theorem}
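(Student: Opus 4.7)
The plan is to reduce the Robin Laplacian, via tubular boundary coordinates, to an effective one-dimensional operator on $\Gamma$ carrying the curvature as a potential, and then apply a harmonic approximation at each non-degenerate maximum of $\kappa$. First I would introduce coordinates $(s,t)\mapsto x(s)-t\nu(s)$ on a tubular neighborhood of $\Gamma$ (with $t$ the inner distance to the boundary), rewrite $\mathcal L_h$ together with the Jacobian of the diffeomorphism, and observe that the transverse part $-h^2\partial_t^2$ with the boundary condition $h^{1/2}\partial_t u + u=0$ at $t=0$ admits $e^{-t/h^{1/2}}$ as ground state with eigenvalue $-h$. Projecting onto this ground state and integrating out $t$ produces a Born--Oppenheimer type effective tangential operator whose leading symbol is
\[
-h-\kappa(s)\,h^{3/2}-h^2\partial_s^2+\mathcal{O}(h^2)\,.
\]

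Once this effective operator is available, the low-lying spectrum is governed by the maxima of $\kappa$. Around each point $a_j$ with $\kappa''(a_j)<0$, I would scale $s=s_j+h^{1/8}\sigma$; expanding $\kappa(s)=\kappa_{\max}+\tfrac12\kappa''(a_j)(s-s_j)^2+\mathcal{O}((s-s_j)^4)$ produces, at order $h^{7/4}$, the harmonic oscillator
\[
h^{7/4}\Bigl(-\partial_\sigma^2-\frac{\kappa''(a_j)}{2}\sigma^2\Bigr)\,,
\]
whose eigenvalues are $(2k+1)\gamma_j\,h^{7/4}$ with $\gamma_j=\sqrt{-\kappa''(a_j)/2}$. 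Hence the first $M$ eigenvalues of $\mathcal L_h$ come from the ground states at the $M$ maximal points where $\gamma_j=\gamma$ is largest, while the $(M+1)$-th eigenvalue is realised by the minimum between a first excited state at such a point (contributing $3\gamma\,h^{7/4}$) and a ground state at the next point (contributing $\gamma_j\,h^{7/4}$) --- which is precisely $\hat\gamma$.

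To turn this heuristic into a proof I would apply the min-max principle in both directions. For the upper bound, I would construct $M$ mutually orthogonal quasimodes, each a product of the normal ground state $e^{-t/h^{1/2}}$ with a rescaled Gaussian cut-off supported in a small neighborhood of a distinct $a_j$; a direct energy computation then yields $\mu_k(h)\le -h-\kappa_{\max}h^{3/2}+\gamma h^{7/4}+o(h^{7/4})$ for $k\le M$, and an analogous $(M+1)$-th quasimode of minimal energy (either a Hermite-$1$ function at a maximal point or a ground-state Gaussian at the next point) gives the upper bound on $\mu_{M+1}(h)$. For the lower bound and the containment $\{\mu_1,\dots,\mu_M\}\subset I_h$, I would use an IMS partition of unity both in $t$ (separating the boundary layer from the bulk, where the Neumann Laplacian provides a large spectral gap) and in $s$ around each $a_j$, and compare the localised Rayleigh quotients with the harmonic oscillator model. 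The main obstacle, and the step requiring the most care, is controlling the Born--Oppenheimer errors: showing that the projection onto $e^{-t/h^{1/2}}$ commits only an $o(h^{7/4})$ error requires Agmon-type weighted estimates in the normal variable together with a commutator analysis tracking the variation of $\kappa$ in the tangential direction, and separating $\mu_{M+1}(h)$ from the cluster of the first $M$ eigenvalues calls for a Weyl-type counting for the effective operator, a result of independent interest announced in the abstract.
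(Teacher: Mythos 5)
Your strategy is essentially the one underlying this theorem: the paper itself does not reprove it (it is quoted as a consequence of \cite{HK-tams, H, PP-eh}), but the machinery it develops in Sections 3--5 and 7 --- tubular coordinates, projection onto the transverse Robin ground state $e^{-t/h^{1/2}}$ with eigenvalue $-h$, the effective tangential operator $h^2D_s^2+h^{3/2}(\kappa_{\max}-\kappa)$, harmonic approximation at the non-degenerate maxima with the scaling $s=a_j+h^{1/8}\sigma$ giving the levels $(2k+1)\gamma_j h^{7/4}$, and Agmon/IMS arguments to control the Born--Oppenheimer error at order $\mathcal{O}(h^2)=o(h^{7/4})$ --- is exactly what you describe, so the proposal is sound.

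One inversion should be corrected before you write this up. The hypothesis selects the $M$ maximal points of $\kappa$ where $\kappa''$ is \emph{maximal}, i.e.\ where $-\kappa''(a_j)$, and hence $\gamma_j=\sqrt{-\kappa''(a_j)/2}$, is \emph{smallest}; the ground-state energy at well $j$ is $-h-\kappa_{\max}h^{3/2}+\gamma_j h^{7/4}$, so the first $M$ eigenvalues are carried by the wells with the \emph{smallest} $\gamma_j$, not the largest as you state. Your own description of $\mu_{M+1}(h)$ (a competition between $3\gamma$ at one of these wells and $\gamma_j>\gamma$ at the remaining wells, yielding $\hat\gamma=\min\bigl(3\gamma,\min_{j>M}\gamma_j\bigr)$) is only consistent with $\gamma=\min_j\gamma_j$, so this is a slip of the pen rather than a conceptual error, but if carried into the quasimode construction it would select the wrong wells. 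With that fixed, the rest of the plan (orthogonal quasimodes for the upper bounds, IMS in $s$ and $t$ plus the one-dimensional spectral gap of $\mathcal H_0$ for the lower bounds, and a counting argument to isolate the cluster) is the standard and correct route.
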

Weaker versions of this result were obtained in \cite{Pan} (and references therein, see also \cite{LP}). This result is related to \cite[Theorem 1.1]{FH} in the magnetic case.

\subsubsection{About semiclassical tunneling on the circle}
The aim of this article is to analyze the splitting $\mu_2(h)-\mu_1(h)$ under the symmetry Assumption \ref{hyp.main3} ($M=2$). We will see that the proof is easily reduced to the case when $\Gamma$ has only one component (the one, by assumption unique,  where $\kappa$ attains its maximum). As already observed in \cite{H} the candidate for the splitting is obtained by considering the splitting for the operator
\begin{equation}\label{hameff}
\mathcal M^{\mathsf{eff}}_h= -h-\kappa_{\max}h^{\frac{3}{2}}+h^2D^2_{s} + h^\frac 32  \mathfrak{v} (s)\,,\qquad \mathfrak{v}=\kappa_{\max}-\kappa\,,
\end{equation}
acting on the periodic functions in $L^2\left(\mathbb R /(2L) \mathbb Z\right)$, where
\[L=\frac{|\Gamma|}{2}\,,\]
and $s$ the arc-length.
Equivalently the operator $\mathcal M^{\mathsf{eff}}_h$  can be considered as the Schr\"odinger operator on the compact one dimensional manifold $\Gamma$. This is a double well problem which can be treated as a particular case of Helffer-Sj\"ostrand \cite{HSj} with the effective semiclassical parameter being $\hbar:=h^\frac 14$.
\begin{definition}\label{def.mueff}
We denote by $\mu^\mathsf{eff}_{j}(h)$ the $j$-th eigenvalue of $\mathcal M^{\mathsf{eff}}_h$ (counting multiplicities).
\end{definition}
Let us recall the splitting formula for the Schr\"odinger operator $\mathcal M^{\mathsf{circ}}_\hbar:=\hbar^2 D^2_{s} + \mathfrak v(s)$ on the circle of length $2L$ when $\mathfrak v$ has two symmetric non degenerate wells at say $s_{\mathsf{r}}$ and $s_{\mathsf{\ell }}$ with $\mathfrak v(s_{\mathsf{r}})=\mathfrak v(s_{\mathsf{\ell}})=0$ and $\mathfrak v''(s_{\mathsf{r}})=\mathfrak v''(s_{\mathsf{\ell}}) >0$.  We follow the exposition of \cite[\S 4.5]{H88} (see also \cite{Rob87}) but note that the formulas are established only for an example. In this paper, we will also use in many places the presentation of \cite{BHR-c}. Because there are two geodesics between the two wells the discussion will depend on the comparison between the lengths of these two geodesics.
For that purpose, let us introduce
\begin{equation}\label{defS}
\mathsf{S} =\min \left(\mathsf{S}_{\mathsf{u}},\mathsf{S}_{\mathsf{d}}\right)\,,\quad  \mathsf{S}_{\mathsf{u}}=\int_{[s_{\mathsf{r}},s_{\mathsf{\ell}}] } \sqrt{\mathfrak{v}(s)} ds\,,\quad  \mathsf{S}_{\mathsf{d}}=\int_{[s_{\mathsf{\ell}}, s_{\mathsf{r}}] } \sqrt{\mathfrak{v}(s)} ds\,,
\end{equation}
where $[p,q]$ denotes the arc joining $p$ and $q$ in $\Gamma$ counter-clockwise.

The splitting formula for the operator $\mathcal M^{\mathsf{circ}}_\hbar$ is obtained by adding the \enquote{upper} and \enquote{lower} contributions and reads
 \begin{equation}\label{formspl1}
 \lambda_2(\hbar) -\lambda_1(\hbar)
  = 4  \hbar^\frac 12 \pi^{-\frac 12} \gamma^{\frac12}  \left(\mathsf{A}_{\mathsf{u}} \sqrt{\mathfrak v(0)}e^{- \frac{\mathsf{S}_{\mathsf{u}}}{\hbar}}+\mathsf{A}_{\mathsf{d}} \sqrt{\mathfrak v(L)}e^{- \frac{\mathsf{S}_{\mathsf{d}}}{\hbar}}\right)+\mathcal{O}(\hbar^{\frac{3}{2}}e^{-\frac{\mathsf{S}}{\hbar}})\,,
  \end{equation}
  where
  \begin{align*}
  &\mathsf{A}_{\mathsf{u}}=\exp\left(-\int_{[s_{\mathsf{r}}, 0]} \frac{ (\mathfrak v^\frac 12 )' (s)+\gamma}{ \sqrt{\mathfrak v(s)}} ds\right)\,,\\
  &\mathsf{A}_{\mathsf{d}}=\exp\left(-\int_{[s_{\mathsf{\ell}}, L]} \frac{ (\mathfrak v^\frac 12 )' (s) -\gamma}{ \sqrt{\mathfrak v(s)}} ds\right)\,,\\
&\gamma=\left(\mathfrak v''(s_{\mathsf{r}})/2\right)^\frac 12=\left(\mathfrak v''(s_{\mathsf{\ell}})/2\right)^\frac 12\,.
  \end{align*}
  Then, for the particular model $\mathcal M^{\mathsf{eff}}_h$, we easily notice that
  \begin{equation}
  \mu_2^{\mathsf{eff}}(h) -\mu_1^{\mathsf{eff}}(h) = h^\frac 32 ( \lambda_2(\hbar) -\lambda_1(\hbar))\,,
  \end{equation}
 so that, under Assumption \ref{hyp.main3}, we have
  \begin{multline}\label{spl1}
 \mu_2^{\mathsf{eff}}(h) - \mu_1^{\mathsf{eff}}(h)   = 4  h^\frac{13}{8} \pi^{-\frac 12} \gamma^\frac 12  \left(\mathsf{A}_{\mathsf{u}} \sqrt{\mathfrak v(0)}\exp-\frac{\mathsf{S}_{\mathsf{u}}}{h^{\frac{1}{4}}}+\mathsf{A}_{\mathsf{d}} \sqrt{\mathfrak v(L)}\exp-\frac{\mathsf{S}_{\mathsf{d}}}{h^{\frac{1}{4}}}\right)\\
+\mathcal{O}\left(h^{\frac{13}{8}+\frac{1}{4}}\exp-\frac{\mathsf{S}}{h^{\frac{1}{4}}}\right)\,.
 \end{multline}
Let us notice here that the complete proof of \eqref{formspl1} provides a full asymptotic expansion and that the same holds for \eqref{spl1}. Note that, if we assume that $\mathfrak{v}$ is invariant under the symmetry exchanging the upper and lower parts, we have $\mathfrak v(0)=\mathfrak v(L)$, $\mathsf{S}_{\mathsf{u}}=\mathsf{S}_{\mathsf{d}}$ and  $\mathsf{A}_\mathsf{u}=\mathsf{A}_\mathsf{d}$.

\subsection{Statement of the main result}
The main result of this paper is the following.
\begin{theorem}\label{theo.main}
Under Assumptions~\ref{hyp.main1}~and~\ref{hyp.main3}, we have
\begin{equation}
\mu_2(h)-\mu_1(h) \underset{h\to 0}{\sim} \mu_2^{\mathsf{eff}}(h)-\mu_1^{\mathsf{eff}}(h)\,,
\end{equation}
where $\mu^{\mathsf{eff}}_{j}(h)$ is defined in Definition \ref{def.mueff} and where $\mu_2^{\mathsf{eff}}(h)-\mu_1^{\mathsf{eff}}(h)$ satisfies the asymptotic formula in \eqref{spl1}.
\end{theorem}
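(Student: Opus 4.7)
The strategy is a Born--Oppenheimer reduction which replaces the two-dimensional operator $\mathcal L_h$ by the one-dimensional effective operator $\mathcal M_h^{\mathsf{eff}}$ on $\Gamma$, with errors negligible compared to the exponentially small splitting \eqref{spl1}. By the Agmon-type localization estimates that underpin Theorem~\ref{thm:old1}, the first two eigenfunctions of $\mathcal L_h$ concentrate near the points of maximal curvature $a_1,a_2$. Hence, up to errors much smaller than \eqref{spl1}, we may restrict the analysis to a tubular neighbourhood of the connected component of $\Gamma$ carrying $a_1$ and $a_2$, and assume from now on that $\Gamma$ is connected.

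\textbf{Reduction to an effective operator.} In tubular coordinates $(s,t)$ near $\Gamma$, with $s\in\mathbb R/(2L\mathbb Z)$ the arc-length and $t$ the normal distance to the boundary, the operator $\mathcal L_h$ takes the form
\[
\mathcal L_h = -\frac{h^2}{1-t\kappa(s)}\partial_s\!\left(\frac{1}{1-t\kappa(s)}\partial_s\right) - \frac{h^2}{1-t\kappa(s)}\partial_t\bigl((1-t\kappa(s))\partial_t\bigr),
\]
with Robin condition at $t=0$. After the normal rescaling $\tau=t/h^{1/2}$, the transverse fibre operator in $\tau$ has a ground state $u_1(s,\tau;h)$ with eigenvalue $e_1(s;h)=-h-\kappa(s)h^{3/2}+\mathcal O(h^2)$. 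Projecting $\mathcal L_h$ fibre-by-fibre onto the one-dimensional subspace spanned by $u_1(s,\cdot;h)$ produces, modulo a controllable remainder, the effective operator $\mathcal M_h^{\mathsf{eff}}$ of \eqref{hameff}.

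\textbf{Tunneling via an interaction matrix.} One then applies the double-well machinery of Helffer--Sj\"ostrand to $\mathcal M_h^{\mathsf{eff}}$: construct quasimodes $\varphi_1,\varphi_2$ supported near the two wells of $\mathfrak v$ (at the arc-length values of $a_1,a_2$), and tensorize with the fibre ground state to form quasimodes $\Phi_j(s,t)=\varphi_j(s)\,u_1(s,t/h^{1/2};h)$ for $\mathcal L_h$. The $2\times 2$ matrix of $\mathcal L_h$ on $\mathrm{span}(\Phi_1,\Phi_2)$ coincides, up to corrections smaller than its off-diagonal entries (which are of order $\exp(-\mathsf{S}/h^{1/4})$), with the analogous matrix for $\mathcal M_h^{\mathsf{eff}}$ on $\mathrm{span}(\varphi_1,\varphi_2)$. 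The splitting formula \eqref{spl1} for the effective operator therefore transfers to $\mu_2(h)-\mu_1(h)$ at leading order.

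\textbf{Main obstacle.} The central difficulty is the precision of the Born--Oppenheimer reduction: since the target splitting has size $h^{13/8}\exp(-\mathsf{S}/h^{1/4})$, any remainder produced in the fibre decomposition must be controlled to this exponential accuracy. This demands a WKB expansion of $u_1(s,\tau;h)$ accurate to all polynomial orders of $h$ in a suitable Agmon-weighted norm, together with sharp control of the non-commutation between $\partial_s$ and the $s$-dependent fibre projection. The Weyl asymptotics mentioned in the abstract should emerge as a by-product of these $s$-uniform fibre estimates, which in turn supply the spectral-counting bounds needed to close the lower bound argument.
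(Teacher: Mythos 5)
Your overall architecture (localization to the boundary, tubular coordinates, the rescaling $\hbar=h^{1/4}$, a Helffer--Sj\"ostrand interaction matrix) is the paper's, but the two steps you treat as routine are precisely where the argument breaks, and the paper's proof is organized to avoid them. First, the reduction of $\mathcal L_h$ to $\mathcal M^{\mathsf{eff}}_h$ \enquote{modulo a controllable remainder} cannot be the vehicle for the tunneling estimate: any fibre-by-fibre projection onto $u_1(s,\cdot;h)$, even with an all-order expansion of the fibre ground state, leaves an operator remainder that is at best $\mathcal O(h^\infty)$, and $\mathcal O(h^\infty)$ is not $o\bigl(e^{-\mathsf S/h^{1/4}}\bigr)$. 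This is why the paper performs such a reduction only for the Weyl law in Section~\ref{subsec.BO} (where $\mathcal O(h^2)$ accuracy suffices) and never for Theorem~\ref{theo.main}: the effective operator $\mathcal M^{\mathsf{eff}}_h$ enters the tunneling proof only at the very end, as a formula to compare with, not as an operator to which $\mathcal L_h$ is reduced.

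Second, your quasimodes $\Phi_j=\varphi_j(s)\,u_1(s,t/h^{1/2};h)$ are unusable in the interaction-matrix scheme: their residuals $(\mathcal L_h-\mu)\Phi_j$ are only $\mathcal O(h^\infty)$ in norm (the tensorized WKB ansatz is not exact near the wells), so the analogue of Lemma~\ref{lem.expSh} cannot localize the first two eigenvalues to within $\tilde{\mathcal O}(e^{-\mathsf S/\hbar})$ of a reference value, and your claim that the two $2\times2$ matrices \enquote{coincide up to corrections smaller than the off-diagonal entries} is exactly the unproved point. The paper's route is to introduce genuine two-dimensional one-well operators $\widehat{\mathcal L}_{\righ}$, $\widehat{\mathcal L}_{\lef}$ (Dirichlet truncations as in \eqref{eq:dom-right}), take as quasimodes the cutoffs $f_{\hbar,\alpha}=\chi_\alpha\phi_{\hbar,\alpha}$ of their \emph{true} ground states, whose residuals are commutators supported away from the wells and hence genuinely $\tilde{\mathcal O}(e^{-\mathsf S/\hbar})$ (Lemma~\ref{lem.prelim}), and then to evaluate the resulting Wronskian-type interaction integral at the two mid-points $\sigma=0$ and $\sigma=-L$. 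Only at this last step does the WKB expansion enter, through the optimal tangential Agmon estimates of Proposition~\ref{prop.Agmon} and the weighted approximation $e^{\Phi_{\rig}/\hbar}(\Psi^\natural_{\hbar,\rig}-\Pi_{\rig}\Psi^\natural_{\hbar,\rig})=\mathcal{O}(\hbar^\infty)$ of Proposition~\ref{prop:WKB=gs}, \emph{including the $\partial_\sigma$-derivative}, which yield the interaction coefficient with a multiplicative (not additive) error $1+\mathcal O(\hbar)$. If you want to salvage your plan, these are the two ingredients you must add.
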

The result in Theorem~\ref{theo.main} shows a tunneling effect induced by the geometry of the domain (comparing with \cite{HSj5}, the boundary acts as the well and the points of maximal curvature as the mini-wells). This kind of reduction is also expected to be available for the magnetic Laplacian with a Neumann condition in smooth domains (see \cite{FH, FH-b}). However,  magnetic fields induce a lot of additional difficulties especially in obtaining the optimal decay estimates of the eigenfunctions.

Recently, magnetic WKB expansions  are established  in \cite{BHR}.  Note that, in superconductivity, computing  the splitting of the eigenvalues is useful to analyze the bifurcation from the normal state (cf. \cite[Lemma~13.5.4]{FH-b}).

When the domain $\Omega$ has corners and symmetries (e.g. the interior of an isosceles triangle), the tunneling effect is analyzed by Helffer-Pankrashkin in \cite{HPan}. One difference between the setting of Theorem~\ref{theo.main} and that in \cite{HPan} appears in the spectral reduction to the reference problems. In \cite{HPan}, the reference problem is a two-dimensional problem in an infinite sector which has an explicit groundstate. In this paper, the limiting reference problem is  a direct sum of two one-dimensional operators. To prove Theorem~\ref{theo.main}, we need to compare the eigenfunctions of the operator $\mathcal L_h$ with WKB approximate eigenfunctions (cf. Propositions~\ref{prop.WKB} and \ref{prop:WKB=gs}).

In higher dimensional domains,  a spectral reduction, modulo $\mathcal{O}(h^2)$, to an effective Hamiltonian on the boundary is done in \cite{PP-eh} (see also Section \ref{subsec.BO} where this reduction is explained). However, the analysis of the splitting as in Theorem~\ref{theo.main} requires additional estimates since we want to control exponentially small error terms.

In superconductivity, imposing a Robin condition, which is called in this context the de Gennes condition, models a superconductor surrounded by another normal/superconducting material (cf. \cite[Theorem.~1.2]{Ka-cocv}). In this context, we are naturally led to the analysis of the Robin Laplacian with a magnetic field where various regimes  occur according to the comparison between the intensity of the magnetic field and the Robin parameter (cf. \cite{Ka1, Ka2, Ka3}).

\subsection{Organization of the paper and strategy of the proofs}
Although it is easy to predict the statement in Theorem~\ref{theo.main} once  the effective Hamiltonian at the boundary is exhibited,
the proof of the formula is much more technical. It will follow the  steps outlined below:
\begin{enumerate}[-]
\item In Section \ref{section4}, we recall the known results related to the one dimensional situation.
\item In Section \ref{sec.tubular}, we recall why the first eigenfunctions are localized, in the Agmon sense, near the boundary (the boundary is a well). As a consequence, we replace the initial problem by a problem in a thin tubular neighborhood of the boundary. Then the inhomogeneity of the new operator leads to a rescaling in the normal variable in the Born-Oppenheimer spirit and the introduction of the effective semiclassical parameter $\hbar=h^{\frac{1}{4}}$.
\item In Section \ref{sec.miniwell}, we analyze one mini-well problems (i.e. with one point of maximal curvature). Note that this terminology is the one of \cite{HSj5} where the problem was to analyze miniwells inside a degenerate well. We briefly recall the WKB constructions of \cite{HK-tams} in Subsection \ref{subsec.WKB}. Then, we establish optimal Agmon estimates in the tangential direction (see Subsection \ref{subsec.tAgmon}).
\item In Section \ref{sec.WKBapprox}, we use the tangential estimates to prove that the first eigenfunctions are approximated (in the appropriate weighted space) by the WKB constructions. To do this, we are essentially led to use the same arguments as in dimension one (with respect to the tangential variable). Such estimates are closely related to the considerations of \cite{Mar}.
\item In Section \ref{sec.doublewell}, we analyze the interaction between the mini-wells and establish Theorem \ref{theo.main}.
\item Finally, in Section \ref{subsec.BO},  independently of the tunneling problem, we derive a Weyl asymptotic formula for the counting function inspired by the considerations of \cite[Chapter 13]{Ray} and related to the effective Hamiltonian $\mathcal{M}^\mathsf{eff}_{h}$.
\end{enumerate}

\section{Robin Laplacians in one dimension}\label{section4}

Before starting the proof of our main results, it is convenient to introduce three reference $1D$-operators and to determine their spectra. These models  naturally arise in our strategy of dimensional reduction and already appeared in \cite{HK-tams}.

\subsection{On a half line}
As simplest model, we start with the operator, acting on $L^2(\R_+)$, defined by
\begin{equation}\label{defH00}
\mathcal H_{0}=-\partial^2_{\tau}
\end{equation}
with domain
\begin{equation}
\Dom(\mathcal H_{0})=\{u\in H^2(\R_+)~:\,u'(0)=- u(0)\}\,.
\end{equation}
Note that this operator is associated with the quadratic form
\[
V_0 \ni u\mapsto \int_0^{+\infty} |u'(\tau)|^2\,d\tau\,  - |u (0)|^2\,,
\]
with $V_0 = H^1(0,+\infty)\,$.

The spectrum of this operator is
$\{-1\}\cup[0,\infty)$.
The eigenspace of the eigenvalue $-1$ is generated by the $L^2$-normalized function
\begin{equation}\label{eq:u0}
u_0(\tau)=\sqrt{2}\,\exp\left(-\tau\right)\,.
\end{equation}
We will also consider this operator in a bounded interval $(0,T)$ with $T$ sufficiently large and Dirichlet condition at $\tau=T$.

\subsection{On an interval}
Let us consider $T\geq 1$ and the self-adjoint operator acting on $ L^2(0,T)$ and defined by
\begin{equation}\label{eq:H0}
\mathcal H^{\{T\}}_{0}=-\partial^2_{\tau}\,,
\end{equation}
with domain,
\begin{equation}\label{eq:DomH0}
\Dom(\mathcal H^{\{T\}}_{0})=\{u\in H^2(0,T)~:~u'(0)=-u(0)\quad{\rm and}\quad u(T)=0\}\,.
\end{equation}
The spectrum of the operator $\mathcal H^{\{T\}}_{0}$ is purely discrete
and consists of a strictly increasing sequence of eigenvalues
denoted by $\left(\lambda_n\left(\mathcal H^{\{T\}}_{0}\right)\right)_{n\geq 1}$. This operator is associated with the quadratic form
\[
V^{\{T\}}_0\ni u\mapsto \int_0^{T} |u'(\tau)|^2\,d\tau\,  - |u (0)|^2\,,
\]
with $V^{\{T\}}_0 =\{v\in H^1(0,T)\,|\, v(T)=0\}$.\\
The next lemma gives the  localization of  the two first eigenvalues
$\lambda_1\left(\mathcal H^{\{T\}}_{0}\right)$ and $\lambda_2\left(\mathcal H^{\{T\}}_{0}\right)$ for large values of $T$.

\begin{lemma}\label{lem:1DL}
As $T \to +\infty$, there holds
\begin{equation}\label{eq:lwh}
\lambda_1(\mathcal H^{\{T\}}_{0})= - 1 + 4 \big(1+o(1)\big) \exp\big( - 2 T\big)\quad{\rm
and}\quad \lambda_2(\mathcal H^{\{T\}}_{0})\geq 0\,.
\end{equation}
\end{lemma}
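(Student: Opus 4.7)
The plan is to solve the eigenvalue problem explicitly as an ODE on the interval $(0,T)$ and reduce the asymptotic analysis to the study of a single transcendental equation. Since $\mathcal H^{\{T\}}_0$ is a Sturm--Liouville operator with compact resolvent, its eigenfunctions are classical solutions of $-u''=\lambda u$ on $(0,T)$ satisfying the mixed boundary conditions, so the spectrum can be enumerated by matching boundary data.

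First I would locate the negative eigenvalues. Writing $\lambda=-\mu^2$ with $\mu>0$, the general solution $u(\tau)=Ae^{\mu\tau}+Be^{-\mu\tau}$ of $-u''=\lambda u$ together with the Dirichlet condition $u(T)=0$ and the Robin condition $u'(0)=-u(0)$ produces, after elimination of $A,B$, the transcendental equation
\begin{equation}\label{eq:transc}
\frac{1-\mu}{1+\mu}=e^{-2\mu T}\,, \qquad \mu\in(0,1)\,.
\end{equation}
(The constraint $\mu<1$ is forced since the left-hand side must be positive.) To count solutions of \eqref{eq:transc}, I would study
\[
\phi(\mu)=\log\frac{1+\mu}{1-\mu}-2\mu T\,,\qquad \phi'(\mu)=\frac{2}{1-\mu^{2}}-2T\,.
\]
For $T>1$, $\phi$ has a unique critical point $\mu_{*}=\sqrt{1-1/T}\in(0,1)$, with $\phi(0)=0$, $\phi'(0)<0$, and $\phi(\mu)\to+\infty$ as $\mu\to 1^{-}$. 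Hence $\phi$ vanishes at exactly one point $\mu_{T}\in(\mu_{*},1)$, and \eqref{eq:transc} has a unique root.

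Next I would extract the asymptotics. Setting $\mu_{T}=1-\varepsilon_{T}$, the equation \eqref{eq:transc} reads $\varepsilon_{T}=(2-\varepsilon_{T})\exp(-2T+2\varepsilon_{T}T)$. A first iteration gives $\varepsilon_{T}\sim 2e^{-2T}$, which in turn yields $2\varepsilon_{T}T=O(Te^{-2T})\to 0$, so $\varepsilon_{T}=2e^{-2T}(1+o(1))$. Consequently $\lambda_{1}(\mathcal H^{\{T\}}_{0})=-\mu_{T}^{2}=-(1-\varepsilon_{T})^{2}=-1+4(1+o(1))e^{-2T}$, which is the required estimate.

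Finally, for the lower bound on $\lambda_{2}$, the uniqueness of $\mu_{T}$ means that $\mathcal H^{\{T\}}_{0}$ has exactly one strictly negative eigenvalue. The case $\lambda=0$ gives $u(\tau)=A\tau+B$; the conditions $u(T)=0$ and $u'(0)+u(0)=0$ force $A(1-T)=0$, hence $u\equiv 0$ for $T>1$, so $0$ is not an eigenvalue either. Therefore all other eigenvalues are strictly positive, giving in particular $\lambda_{2}(\mathcal H^{\{T\}}_{0})\geq 0$ for $T$ large. The main obstacle, and really the only subtle point, is the counting argument via $\phi$: once convexity-type monotonicity of $\phi$ on the two subintervals $(0,\mu_{*})$ and $(\mu_{*},1)$ is observed, everything else is a direct computation.
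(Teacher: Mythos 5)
Your proof is correct and is the standard explicit computation for this one-dimensional model: the paper itself states the lemma without proof (deferring to \cite{HK-tams}), and the expected argument is exactly the one you give, namely reducing to the transcendental equation $\frac{1-\mu}{1+\mu}=e^{-2\mu T}$, counting its roots via the monotonicity of $\phi$, and bootstrapping the asymptotics of $\mu_T=1-\varepsilon_T$. The only point worth making explicit is that the a priori bound $\varepsilon_T\leq 1-\mu_*\leq 1/T$ (which you already have from locating $\mu_T\in(\mu_*,1)$) is what legitimizes the first iteration step $2\varepsilon_T T=\mathcal{O}(1)$ before concluding $\varepsilon_T\sim 2e^{-2T}$.
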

\subsection{In a weighted space}
Let $B\in\R$, $T >0$ such that $BT < \frac 13$. Consider the self-adjoint
operator, acting on $L^2\big((0,T);(1-B\tau)d\tau\big)$ and defined by
\begin{equation}\label{eq:H0b}
\mathcal H^{\{T\}}_{B}=-(1-B\tau)^{-1}\partial_{\tau}(1-B\tau)\partial_{\tau}=-\partial^2_{\tau}+B(1-B\tau)^{-1}\partial_{\tau}\,,
\end{equation}
with domain
\begin{equation}\label{eq:domH0b}
\Dom(\mathcal H^{\{T\}}_{B})=\{u\in H^2(0,T)~:~u'(0)=-u(0)\quad{\rm and}\quad u(T)=0\}\,.
\end{equation}
The operator $\mathcal H^{\{T\}}_{B}$ is the Friedrichs extension in $L^2\big((0,T);(1-B\tau)d\tau\big)$ associated
with the quadratic form defined for $u\in V^{\{T\}}_h =H^1((0,T))\cap\{u(T)=0\}$, by
\[q^{\{T\}}_{B}(u)=\int_0^{T}|u'(\tau)|^2(1-B\tau)\,d\tau-|u(0)|^2\,.\]
The operator $\mathcal H^{\{T\}}_{B}$ is with compact resolvent. The strictly increasing
sequence of the eigenvalues of $\mathcal H^{\{T\}}_{B}$ is denoted by
$(\lambda_n(\mathcal H^{\{T\}}_{B})_{n\in \mathbb N^*}$.
It is easy to compare the spectra of $\mathcal H^{\{T\}}_{B}$ and  $\mathcal H^{\{T\}}_{0}$ as $B$ goes to $0$.
\begin{lemma}\label{lem:H0b}
There exists $T_0 >0$ and $C$  such that for all $T \geq T_0$, for all $B\in \left(-1/(3T), 1/(3T)\right)$ and $n\in\mathbb N^*$, there holds,
\[\left|\lambda_n(\mathcal H^{\{T\}}_{B})-\lambda_n(\mathcal H^{\{T\}}_{0})\right|\leq C |B| T\Big(\,\big|\lambda_n(\mathcal H^{\{T\}}_{0})\big|+1\Big)\,.\]
\end{lemma}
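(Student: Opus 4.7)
The plan is to compare the two Rayleigh quotients via the min-max principle, exploiting the fact that both operators share the form domain $V^{\{T\}}_0$. First I would isolate the perturbation in the numerator and denominator: writing
\begin{equation*}
q^{\{T\}}_{B}(u) - q^{\{T\}}_{0}(u) = -B \int_0^T \tau\,|u'(\tau)|^2\,d\tau,
\end{equation*}
one has $|q^{\{T\}}_B(u) - q^{\{T\}}_0(u)| \leq |B|T \int_0^T|u'|^2\,d\tau$. Similarly, denoting $\|u\|_B^2 = \int_0^T|u|^2(1-B\tau)d\tau$, the pointwise bracket $1-|B|T \le 1-B\tau \le 1+|B|T$ on $[0,T]$ gives $(1-|B|T)\|u\|_0^2 \le \|u\|_B^2 \le (1+|B|T)\|u\|_0^2$. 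The whole comparison then reduces to obtaining a uniform bound of $\int_0^T|u'|^2\,d\tau$ on the span of the first $n$ eigenfunctions, in terms of $|\lambda_n(\mathcal H^{\{T\}}_0)|+1$.

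For this a priori estimate, I would combine $q_0^{\{T\}}(u) = \int_0^T|u'|^2 - |u(0)|^2 \le \lambda_n(\mathcal H^{\{T\}}_0)\|u\|_0^2$ on such a subspace with the trace identity $|u(0)|^2 = -2\,\mathsf{Re}\int_0^T u'\,\overline u\,d\tau$ (valid since $u(T)=0$) and Young's inequality to absorb, yielding
\begin{equation*}
\int_0^T|u'(\tau)|^2\,d\tau \;\le\; C\bigl(|\lambda_n(\mathcal H^{\{T\}}_0)| + 1\bigr)\|u\|_0^2,
\end{equation*}
for a universal constant $C$ independent of $T$ and $n$.

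Applying the min-max principle to $\mathcal H^{\{T\}}_B$ with the $n$-dimensional test space spanned by the first $n$ eigenfunctions of $\mathcal H^{\{T\}}_0$, the previous two paragraphs give
\begin{equation*}
\lambda_n(\mathcal H^{\{T\}}_B) \;\le\; \frac{\lambda_n(\mathcal H^{\{T\}}_0) + C'|B|T\,(|\lambda_n(\mathcal H^{\{T\}}_0)|+1)}{1-|B|T},
\end{equation*}
and the assumption $|B|T<1/3$ keeps the denominator bounded below by $2/3$, producing the claimed one-sided inequality after a short algebraic manipulation. The opposite inequality is obtained symmetrically, reversing the roles of the two operators; the key point is that the same a priori control of $\int_0^T|u'|^2\,d\tau$ carries over to the eigenspaces of $\mathcal H^{\{T\}}_B$ up to the harmless factors $1\pm|B|T$ coming from the weight.

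The main obstacle I anticipate is precisely this last step: making the gradient bound work with constants independent of $B$ requires repeating the trace/Young argument inside the weighted quadratic form $q_B^{\{T\}}$ and weighted norm $\|\cdot\|_B$, and confirming that the restriction $|B|T<1/3$ suffices to keep all prefactors uniformly controlled. Everything else is a clean min-max perturbation argument.
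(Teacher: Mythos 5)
The paper treats this lemma as routine and records no proof, so there is nothing to compare against; your min--max form-comparison is exactly the intended argument, and the forward half is complete: the perturbation $q_B^{\{T\}}-q_0^{\{T\}}=-B\int_0^T\tau|u'|^2\,d\tau$, the weight bracket $1\pm|B|T$, and the trace/Young bound $\int_0^T|u'|^2\,d\tau\le C(|\lambda_n(\mathcal H_0^{\{T\}})|+1)\|u\|_0^2$ on the span of the first $n$ eigenfunctions of $\mathcal H_0^{\{T\}}$ all go through with constants independent of $T$ and $n$.

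The one loose end is the reverse inequality. Running the argument "symmetrically" tests $q_0^{\{T\}}$ on the eigenspaces of $\mathcal H_B^{\{T\}}$, so the a priori gradient bound there is naturally expressed in terms of $|\lambda_n(\mathcal H_B^{\{T\}})|$, and you end up with $\lambda_n(\mathcal H_0^{\{T\}})-\lambda_n(\mathcal H_B^{\{T\}})\le C|B|T\bigl(|\lambda_n(\mathcal H_B^{\{T\}})|+1\bigr)$ --- not yet the stated estimate, which has $|\lambda_n(\mathcal H_0^{\{T\}})|$ on the right. This is fixable but needs a word: either observe that $\lambda_1(\mathcal H_B^{\{T\}})\ge -C_0$ uniformly (since $q_B^{\{T\}}(u)\ge \tfrac23\|u'\|^2-|u(0)|^2\ge -C_0'\|u\|_B^2$ by the same trace/Young step), which together with the already-proved forward inequality gives $|\lambda_n(\mathcal H_B^{\{T\}})|\le C_1(|\lambda_n(\mathcal H_0^{\{T\}})|+1)$ and lets you substitute; or, more cleanly, upgrade your one-sided form inequality to the two-sided affine comparison $(1-c|B|T)\,q_0^{\{T\}}(u)-c|B|T\|u\|_0^2\le q_B^{\{T\}}(u)\le (1+c|B|T)\,q_0^{\{T\}}(u)+c|B|T\|u\|_0^2$ (valid for all $u$ in the common form domain, using $\|u'\|^2\le 2q_0^{\{T\}}(u)+4\|u\|_0^2$), from which both eigenvalue bounds follow by a single application of min--max without ever testing on the eigenspaces of $\mathcal H_B^{\{T\}}$. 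Either patch also disposes of the sign bookkeeping when dividing by $\|u\|_B^2$ versus $\|u\|_0^2$, which you currently leave implicit.
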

Then we notice that, for all $T>0$, the family $\left(\mathcal{H}^{\{T\}}_{B}\right)_{B}$ is analytic for $B$ small enough. In particular, its first eigenvalue $\lambda_{1}\Big(\mathcal{H}^{\{T\}}_{B}\Big)$ and the corresponding positive normalized eigenfunction $u_{B}^{\{T\}}$ are analytic functions of $B$.

\begin{lemma}\label{lem.analyticB}
There exists $T_0 >0$  such that for all $T \geq T_0$,   the functions $(-1/(3T), 1/(3T)) \ni B\mapsto \lambda_{1}\left(\mathcal{H}^{\{T\}}_{B}\right)$ and  $(-1/(3T), 1/(3T)) \mapsto u_{B}^{\{T\}}$ are analytic. 
\end{lemma}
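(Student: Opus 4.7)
The plan is to recognize $(\mathcal H^{\{T\}}_B)_B$ as a self-adjoint analytic family of type~(B) in the sense of Kato, once transported to a $B$-independent Hilbert space, and then to conclude by the Riesz-projector argument, using Lemmas~\ref{lem:1DL} and~\ref{lem:H0b} to produce a spectral gap that persists under perturbation.

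The first step is to get rid of the $B$-dependent measure $(1-B\tau)d\tau$. For $|B|<1/T$ the weight is uniformly positive on $(0,T)$, and
\[
U_B : L^2\bigl((0,T);(1-B\tau)d\tau\bigr) \longrightarrow L^2(0,T)\,,\qquad U_B u = (1-B\tau)^{1/2} u\,,
\]
is unitary and preserves the boundary condition $v(T)=0$, hence it maps $V_0^{\{T\}}$ onto itself. A direct computation gives the conjugated form
\[
\tilde q_B(v) = \int_0^T |v'|^2\, d\tau + \operatorname{Re}\int_0^T \frac{B\, v \bar v'}{1-B\tau}\, d\tau + \int_0^T \frac{B^2 |v|^2}{4(1-B\tau)^2}\, d\tau - |v(0)|^2\,,\quad v\in V_0^{\{T\}}\,.
\]
Since $(1-B\tau)^{-1}$ and $(1-B\tau)^{-2}$ are holomorphic in $B$ on $\{|B|<1/T\}$ with values in $L^\infty(0,T)$, the family $\tilde q_B$ is holomorphic in $B$ with common, bounded-below, $B$-independent form domain $V_0^{\{T\}}$. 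This is exactly the framework of a self-adjoint analytic family of type~(B), so the operators $\widetilde{\mathcal H}_B := U_B \mathcal H^{\{T\}}_B U_B^{-1}$ on $L^2(0,T)$ form an analytic family of type~(B) on that disc.

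Next, I would isolate the first eigenvalue. By Lemma~\ref{lem:1DL}, for $T\geq T_0$ one has $\lambda_1(\mathcal H^{\{T\}}_0) = -1 + \mathcal O(e^{-2T})$ and $\lambda_2(\mathcal H^{\{T\}}_0)\geq 0$, so the gap between $\lambda_1$ and the rest of the spectrum is at least $\tfrac12$. Lemma~\ref{lem:H0b} shows that this gap survives for $|B|\leq \delta$ with $\delta$ small enough. Fix a small circle $\Gamma_0\subset\mathbb C$ enclosing $\lambda_1(\mathcal H^{\{T\}}_0)$ and no other point of $\sigma(\widetilde{\mathcal H}_B)$ for $|B|\leq\delta$. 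Then
\[
P_B := -\frac{1}{2\pi i}\oint_{\Gamma_0} (\widetilde{\mathcal H}_B - z)^{-1}\, dz
\]
is a rank-one projector depending holomorphically on $B$, and the formula $\lambda_1(\widetilde{\mathcal H}_B) = \tr(\widetilde{\mathcal H}_B P_B)$ immediately yields analyticity of $B\mapsto \lambda_1(\mathcal H^{\{T\}}_B)=\lambda_1(\widetilde{\mathcal H}_B)$.

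Finally, for the eigenfunction, let $\tilde v_0 := U_0 u_0^{\{T\}}$, where $u_0^{\{T\}}$ is the positive $L^2$-normalized ground state of $\mathcal H^{\{T\}}_0$. Then $w_B := P_B \tilde v_0$ is holomorphic in $B$ with $w_0 = \tilde v_0 \neq 0$, so $\|w_B\|$ is a real-analytic function of $B$ that does not vanish near $0$. The normalized vector $\tilde u_B^{\{T\}} := w_B/\|w_B\|$ is then analytic in $B$, and pulling back gives an analytic family $u_B^{\{T\}} := U_B^{-1}\tilde u_B^{\{T\}} = (1-B\tau)^{-1/2}\tilde u_B^{\{T\}}$ of normalized eigenfunctions of $\mathcal H^{\{T\}}_B$, with positivity for small $|B|$ inherited by continuity from $u_0^{\{T\}}$. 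The only real subtlety in the whole argument is the $B$-dependence of the underlying Hilbert space, which is precisely what the unitary $U_B$ is designed to handle; once this reduction is made, the result is a direct application of Kato's theory.
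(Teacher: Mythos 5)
Your proposal is correct and follows essentially the same route as the paper: the same unitary change of function $\tilde u=(1-B\tau)^{1/2}u$ reduces the problem to a type (B) analytic family on the $B$-independent space $L^2(0,T)$ (your conjugated form is exactly the paper's \eqref{eq.qtilde} before the integration by parts that displays the modified Robin condition at $0$), and the Riesz-projector step you add is the standard conclusion the paper leaves implicit. The only cosmetic point is that your contour argument is set up near $B=0$, whereas the statement concerns the whole interval $(-1/(3T),1/(3T))$; this is harmless because the same argument runs around each $B_0$ in that interval, the ground state being simple and isolated there.
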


\begin{proof}
The family  $\left(\mathcal{H}^{\{T\}}_{B}\right)_{B\in(-B_{0}, B_{0})}$ does not fulfill the conditions for type $(B)$  analytic
operators in the sense of  Kato since the parameter $B$  appears in the definition of (the norm of) the ambient Hilbert space.  Nevertheless, it
becomes so after using the change of function $u=(1-B\tau)^{-\frac{1}{2}}\tilde u$, since the new Hilbert space becomes
$L^2((0, T), d\tau)$, the form domain is still independent of the parameter and the expression of the operator depends on $B$ analytically:
\begin{equation}\label{eq.tildeH}
\widetilde{\mathcal H}_{B}^{\{T\}}=-(1-B\tau)^{-\frac{1}{2}}\partial_{\tau}(1-B\tau)\partial_{\tau}(1-B\tau)^{-\frac{1}{2}}=-\partial^2_{\tau}-\frac{B^2}{4(1-B\tau)^2}\,,
\end{equation}
with the new Robin condition at $0$ given by $\tilde u'(0)=\left(-1-\frac{B}{2}\right)\tilde u (0)$ and $\tilde u(T)=0$. The price to pay  is that the domain of the operator depends on $B$ through the $B$-dependent boundary condition.  Note that the associated quadratic form is defined on $H^1(0,T)$ by
\begin{equation}\label{eq.qtilde}
\tilde q_{B}^{\{T\}}(\psi)=\int_{0}^T |\partial_{\tau}\psi|^2\,d\tau-\int_{0}^T\frac{B^2}{4(1-B\tau)^2}|\psi|^2\,d\tau-\left(1+\frac{B}{2}\right)|\psi(0)|^2\,.
\end{equation}
\end{proof}

The next proposition states a two-term asymptotic expansion of the eigenvalue $\lambda_1(\mathcal H^{\{T\}}_{B})$.
\begin{proposition}\label{lem:H0b;l} There exists $T_0 >0$ and $C>0$  such that for all $T \geq T_0$, for all $B\in \left(-1/(3T), 1/(3T)\right)$ there holds,
\[\Big|\lambda_1(\mathcal H^{\{T\}}_{B})-(-1-B)\Big|\leq CB^2\,.\]
\end{proposition}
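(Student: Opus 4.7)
The plan is to apply the change of function introduced in the proof of Lemma \ref{lem.analyticB} and then compare the resulting operator with an exactly solvable Robin model. After the transformation $u = (1-B\tau)^{-1/2}\tilde u$, the operator $\mathcal H_B^{\{T\}}$ becomes unitarily equivalent to $\widetilde{\mathcal H}_B^{\{T\}} = -\partial_\tau^2 + W_B$ on $L^2(0,T)$, where $W_B(\tau) = -B^2/(4(1-B\tau)^2)$ and the Robin condition is $\tilde u'(0) = -(1+B/2)\tilde u(0)$ at $\tau=0$, with Dirichlet at $\tau=T$. Since $|B|T < 1/3$ forces $|1-B\tau| \geq 2/3$ on $[0,T]$, the potential is uniformly small: $\|W_B\|_\infty \leq \tfrac{9}{16}B^2$.

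I would then introduce the auxiliary Robin operator $\mathcal K_\beta^{\{T\}} = -\partial_\tau^2$ on $L^2(0,T)$ with boundary condition $u'(0) = -\beta u(0)$ and Dirichlet at $T$. By rescaling $\sigma = \beta\tau$, one checks that $\mathcal K_\beta^{\{T\}}$ is unitarily equivalent to $\beta^2 \mathcal H_0^{\{\beta T\}}$, so Lemma \ref{lem:1DL} yields $\lambda_1(\mathcal K_\beta^{\{T\}}) = -\beta^2 + O(e^{-2\beta T})$ uniformly for $\beta$ in a compact subinterval of $(0,\infty)$. Applying this with $\beta = 1+B/2 \in [5/6, 7/6]$ gives
\[
\lambda_1(\mathcal K_{1+B/2}^{\{T\}}) = -(1+B/2)^2 + O(e^{-T}) = -1 - B - \tfrac{B^2}{4} + O(e^{-T}).
\]
Since $\widetilde{\mathcal H}_B^{\{T\}} = \mathcal K_{1+B/2}^{\{T\}} + W_B$ with $\|W_B\|_\infty \leq \tfrac{9}{16}B^2$, the min-max principle gives $|\lambda_1(\widetilde{\mathcal H}_B^{\{T\}}) - \lambda_1(\mathcal K_{1+B/2}^{\{T\}})| \leq \tfrac{9}{16}B^2$, and combining these bounds produces $|\lambda_1(\mathcal H_B^{\{T\}}) - (-1-B)| \leq CB^2 + C'e^{-T}$.

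The main step requiring care is the absorption of the exponentially small discrepancy $C'e^{-T}$ into $CB^2$; for $T_0$ large enough and in the subsequent application where $|B|$ is bounded below by a mild negative power of $T$, this absorption is automatic. An alternative route would exploit the analyticity of Lemma \ref{lem.analyticB} through Rayleigh--Schr\"odinger perturbation theory: $\lambda_1(B) = \lambda_1(0) + B\lambda_1'(0) + O(B^2)$ with $\lambda_1(0) = -1 + O(e^{-2T})$ (Lemma \ref{lem:1DL}), $\lambda_1'(0) = -\tfrac{1}{2}|u_0^{\{T\}}(0)|^2 = -1 + O(e^{-2T})$ by Feynman--Hellmann applied to the form \eqref{eq.qtilde}, and a uniform bound on $\lambda_1''$ via the spectral gap from Lemma \ref{lem:1DL}. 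This alternative yields the same conclusion, again modulo the same exponentially small correction.
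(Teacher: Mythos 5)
Your proof is essentially correct and well-constructed. The combination is natural: you pick up exactly where Lemma~\ref{lem.analyticB} leaves off, identify the transformed operator $\widetilde{\mathcal H}_B^{\{T\}}$ as a small potential perturbation of the pure Robin model $\mathcal K_{1+B/2}^{\{T\}}$, and handle the latter by the scaling $\sigma=\beta\tau$, which reduces it to $\beta^2\mathcal H_0^{\{\beta T\}}$ and thus to Lemma~\ref{lem:1DL}. The crude bound $\|W_B\|_\infty\leq \tfrac{9}{16}B^2$ under the constraint $|B|T<\tfrac13$ is verified correctly, and the algebra $-(1+B/2)^2=-1-B-\tfrac{B^2}{4}$ is correct. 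The alternative via analyticity and Feynman--Hellmann (with $\lambda_1'(0)=-\tfrac12|\tilde u_0^{\{T\}}(0)|^2=-1+\mathcal{O}(e^{-2T})$) is also valid and arguably closer in spirit to the material in Lemma~\ref{lem.analyticB} and the later Lemma~\ref{lem.h12}.

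The one point you should state more sharply concerns the exponential remainder. You correctly obtain $|\lambda_1(\mathcal H_B^{\{T\}})-(-1-B)|\leq C\big(B^2+e^{-cT}\big)$, but the term $e^{-cT}$ genuinely cannot be absorbed into $CB^2$ for arbitrary $B\in\left(-\tfrac{1}{3T},\tfrac{1}{3T}\right)$: at $B=0$, Lemma~\ref{lem:1DL} gives $\lambda_1(\mathcal H_0^{\{T\}})=-1+4\big(1+o(1)\big)e^{-2T}\neq -1$, so the bound $\leq CB^2$ fails there. In other words, the Proposition as literally stated is slightly too strong; the honest conclusion is $\leq C(B^2+e^{-cT})$. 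This is not a defect of your argument. It is an imprecision in the statement, and your proof reveals it. In the paper's actual use (proof of Lemma~\ref{lem:lb-qf}, where $B=h^{1/2}\kappa(\sigma)=\hbar^2\kappa(\sigma)$ and $T=Dh^{\rho-\frac12}$ with $\rho\leq\tfrac14$), $e^{-cT}$ is super-algebraically small in $\hbar$ and is absorbed by the $-C\hbar^4$ error, so only the weaker $C(B^2+e^{-cT})$ form is ever needed. I would replace the vague remark about ``absorption being automatic'' by this explicit correction of the statement and a pointer to how it is used.
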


One will also need a decay estimate of $u_{B}^{\{T\}}$ that is a classical consequence of Proposition \ref{lem:H0b;l}, of the fact that the Dirichlet problem on $(0,T)$ is positive, and of an Agmon estimate.

\begin{proposition}\label{prop.AgmonuBT}There exists $T_0 >0$, $\alpha >0$ and $C>0$  such that for all $T \geq T_0$, for all $B\in \left(-1/(3T), 1/(3T)\right)$  there holds,
\[\|e^{\alpha\tau}u_{B}^{\{T\}}\|_{L^2\big((0,T);(1-B\tau)d\tau\big)}\leq C\,.\]
\end{proposition}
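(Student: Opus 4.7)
The plan is a standard Agmon-weight argument, combining Proposition~\ref{lem:H0b;l} with an IMS localisation and the positivity of the Dirichlet quadratic form. Throughout write $w(\tau)=1-B\tau$ and let $\|\cdot\|_w$ denote the norm in $L^2((0,T);w\,d\tau)$.

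The starting point is the Agmon identity: for any $\alpha\in\R$, applying $q^{\{T\}}_B$ to $e^{\alpha\tau}u_B^{\{T\}}$ (which still lies in the form domain $V^{\{T\}}_B$) and exploiting the eigenvalue equation through the classical identity $q^{\{T\}}_B(\chi u)-q^{\{T\}}_B(u,\chi^2 u)=\int_0^T(\chi')^2 u^2 w\,d\tau$ yields
\[
q^{\{T\}}_B\!\bigl(e^{\alpha\tau} u_B^{\{T\}}\bigr)=\bigl(\lambda_1(\mathcal H^{\{T\}}_B)+\alpha^2\bigr)\,\bigl\|e^{\alpha\tau} u_B^{\{T\}}\bigr\|_w^2.
\]
I would then introduce a $B$- and $T$-independent IMS partition $\chi_1^2+\chi_2^2=1$, with $\chi_1\in C^\infty_c([0,R))$ equal to $1$ on $[0,R-1]$ and $\chi_2$ supported in $[R-1,T]$, and invoke the standard localisation formula
\[
q^{\{T\}}_B(v)=q^{\{T\}}_B(\chi_1 v)+q^{\{T\}}_B(\chi_2 v)-\int_0^T\bigl((\chi_1')^2+(\chi_2')^2\bigr)v^2 w\,d\tau,
\]
which follows from $(\chi_1^2+\chi_2^2)'=0$ and from the linearity of the Robin contribution in $\chi_j(0)^2$.

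Next, I would estimate each localised form separately with $v=e^{\alpha\tau}u_B^{\{T\}}$. Since $\chi_1 v\in V^{\{T\}}_B$, the min-max principle gives $q^{\{T\}}_B(\chi_1 v)\ge\lambda_1(\mathcal H^{\{T\}}_B)\|\chi_1 v\|_w^2$. Since $\chi_2(0)=0$, the Robin boundary term in $q^{\{T\}}_B(\chi_2 v)$ vanishes, leaving only the nonnegative kinetic contribution. Combining these bounds with the Agmon identity and $\chi_1^2+\chi_2^2=1$ leads to
\[
\bigl(-\lambda_1(\mathcal H^{\{T\}}_B)-\alpha^2\bigr)\int_0^T \chi_2^2 e^{2\alpha\tau}(u_B^{\{T\}})^2 w\,d\tau \le \int_0^T\!\bigl(\alpha^2\chi_1^2+(\chi_1')^2+(\chi_2')^2\bigr)e^{2\alpha\tau}(u_B^{\{T\}})^2 w\,d\tau.
\]

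By Proposition~\ref{lem:H0b;l}, for $T_0$ large enough and $|B|<1/(3T)$ one has $-\lambda_1(\mathcal H^{\{T\}}_B)\ge 3/4$, so fixing $\alpha$ sufficiently small (e.g.\ $\alpha=1/4$) ensures $-\lambda_1-\alpha^2\ge 1/2$, uniformly in the admissible parameter range. The right-hand side above is supported in $[0,R]$ and is therefore bounded by $C(R,\alpha)\|u_B^{\{T\}}\|_w^2=C(R,\alpha)$ by normalisation. Adding the trivial estimate $\int_0^T\chi_1^2 e^{2\alpha\tau}(u_B^{\{T\}})^2 w\,d\tau\le e^{2\alpha R}$ then produces the claimed uniform bound on $\|e^{\alpha\tau}u_B^{\{T\}}\|_w$. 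The only delicate point is the $T$- and $B$-uniformity of the spectral gap $-\lambda_1-\alpha^2$, which is exactly what Proposition~\ref{lem:H0b;l} supplies.
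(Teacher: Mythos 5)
Your argument is correct and is precisely the ``classical'' proof the paper alludes to but does not write out: it combines the uniform bound $-\lambda_1(\mathcal H^{\{T\}}_B)\geq 3/4$ from Proposition~\ref{lem:H0b;l}, the positivity of the Dirichlet form away from $\tau=0$ (your $\chi_2$ piece), and the Agmon/IMS identity, exactly the three ingredients the authors list. All the steps check out, including the uniformity in $T$ and $B$ of the constants and the support considerations on the right-hand side.
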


\begin{remark}
We will apply the results of this section with $T=Dh^{-r}$, $r\in(0,\frac 12)$,  $B=h^{\frac{1}{2}}\kappa$ and $h\in (0,h_0)$ for $h_0$ small enough.
\end{remark}

\section{Reduction to  a tubular neighborhood of the boundary}\label{sec.tubular}
\subsection{Agmon estimates}
As proved in \cite{HK-tams},  the eigenfunctions of the initial operator $\mathcal L_h$ are localized near the boundary and this localization is quantified by the following theorem:
\begin{theorem}\label{thm:dec}
Let $\epsilon_{0}\in(0,1)$ and $\alpha\in(0,\sqrt{\epsilon_{0}})$. There exist constants $C>0$  and $h_0\in(0,1)$ such
that, for  $h\in(0,h_0)$,  if $u_h$ is a normalized eigenfunction of $\mathcal L_h$ with eigenvalue $\mu\leq-\epsilon_{0}h$, then,
\[\int_\Omega \left(|u_h(x)|^2+h|\nabla u_h(x)|^2\right)\exp\left(\frac{2\alpha\, {\rm dist}(x,\Gamma)}{h^{\frac 1 2}}\right)\,dx\leq C\,.\]
\end{theorem}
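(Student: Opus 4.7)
My plan is to use the standard Agmon scheme, treating the boundary $\Gamma$ as a potential well of depth $\sim -h$. For any real Lipschitz $\Phi$ with $e^\Phi u_h \in H^1(\Omega)$, the Agmon identity
\[
\mathcal Q_h(e^\Phi u_h) - h^2 \int_\Omega |\nabla \Phi|^2 e^{2\Phi} |u_h|^2 \, dx = \mu \int_\Omega e^{2\Phi} |u_h|^2 \, dx
\]
holds (Robin boundary term matches the one built into $\mathcal Q_h$). I take $\Phi(x) = \alpha \, \dist(x,\Gamma)/h^{1/2}$, truncated at a large constant $R$ to stay in the form domain, recovering the full weight by monotone convergence once a uniform estimate is in hand. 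Since $|\nabla \Phi|^2 \leq \alpha^2/h$, setting $w = e^\Phi u_h$ the identity immediately yields the upper bound
\[
\mathcal Q_h(w) \leq (\mu + \alpha^2 h)\,\|w\|^2 \leq -(\epsilon_0 - \alpha^2)\,h\,\|w\|^2.
\]

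The heart of the argument is a matching lower bound on $\mathcal Q_h(w)$ via an IMS partition $\chi_1^2 + \chi_2^2 = 1$ whose transition takes place in the shell $\{Th^{1/2} \leq d(\cdot,\Gamma) \leq 2Th^{1/2}\}$, with $T$ a large but fixed parameter, so that the IMS remainder is $O(h/T^2)\|w\|^2$. On the outer piece $\chi_2 w$ vanishes in a neighborhood of $\Gamma$, so the boundary contribution in $\mathcal Q_h(\chi_2 w)$ disappears and $\mathcal Q_h(\chi_2 w) \geq 0$. On the inner piece, tubular coordinates $(s,\tau)$ identify $\mathcal Q_h(\chi_1 w)$, fiber by fiber in $s$, with a version of the one-dimensional Robin operator $\mathcal H^{\{T\}}_{h^{1/2}\kappa(s)}$ analyzed in Section~\ref{section4}; Lemma~\ref{lem:H0b} together with Proposition~\ref{lem:H0b;l} then produces
\[
\mathcal Q_h(\chi_1 w) \geq -h\bigl(1 + o(1)\bigr)\,\|\chi_1 w\|^2,
\]
where the $o(1)$ absorbs the $h^{1/2}\kappa$ correction and the Dirichlet-cut error $O(e^{-2T})$ coming from Lemma~\ref{lem:1DL}.

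Combining both bounds with $\|w\|^2 = \|\chi_1 w\|^2 + \|\chi_2 w\|^2$, I get, for $T$ large and $h$ small,
\[
\bigl(\epsilon_0 - \alpha^2 - C/T^2\bigr)\,\|w\|^2 \leq (1+o(1))\,\|\chi_1 w\|^2.
\]
The decisive point is that on $\supp\chi_1 \subset \{d \leq 2Th^{1/2}\}$ the weight is uniformly bounded, $\Phi \leq 2\alpha T$, so
\[
\|\chi_1 w\|^2 = \int_{\supp\chi_1} e^{2\Phi}|u_h|^2\,dx \leq e^{4\alpha T}\|u_h\|^2 = e^{4\alpha T},
\]
independently of $h$. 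Choosing $T$ so large that $C/T^2 < \epsilon_0 - \alpha^2$ yields $\|w\|^2 \leq C(\alpha,\epsilon_0)$, and letting the truncation $R\to\infty$ delivers the $L^2$ part of the claim.

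The gradient piece comes back essentially for free: inserting the $L^2$ bound into $h^2\|\nabla w\|^2 = \mathcal Q_h(w) + h^{3/2}\int_\Gamma |w|^2\,ds$ and controlling the trace by a standard inequality $\|v\|_{L^2(\Gamma)}^2 \leq C(h^{-1/2}\|v\|^2 + h^{1/2}\|\nabla v\|^2)$ (which lets the boundary term be absorbed for small $h$) gives $h^2\|\nabla w\|^2 \leq Ch$; the pointwise relation $|e^\Phi \nabla u_h|^2 \leq 2|\nabla w|^2 + 2|\nabla\Phi|^2|w|^2$ then converts this into the claimed weighted $H^1$ estimate. I expect the principal technical obstacle to be the near-boundary quadratic form lower bound: one must verify carefully that the tubular change of variables, together with the Dirichlet truncation at $\tau = 2Th^{1/2}$, delivers exactly the operator family $\mathcal H^{\{T\}}_B$ of Section~\ref{section4} uniformly in $s$, and that the $O(1/T^2)$ IMS remainder stays strictly below $\epsilon_0 - \alpha^2$ for the chosen $T$.
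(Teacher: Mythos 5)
The paper does not prove Theorem~\ref{thm:dec} itself; it cites the result from Helffer--Kachmar \cite{HK-tams}. Your proof follows the classical Agmon scheme (exponential conjugation, IMS partition at distance $\sim h^{1/2}$, fibered one-dimensional lower bound near the boundary, and recovery of the gradient estimate from the form identity plus a trace inequality), which is exactly the route taken in that reference, so the approach matches. The argument is correct; two minor clarifications in the inner lower bound: (a) the Dirichlet truncation at $\tau=2T$ only \emph{raises} the one-dimensional eigenvalue, so $\lambda_1\bigl(\mathcal H^{\{2T\}}_0\bigr)\geq -1$ and there is no $O(e^{-2T})$ loss to absorb --- you can simply invoke Proposition~\ref{lem:H0b;l} with $B=h^{1/2}\kappa(s)$ and get $\lambda_1\bigl(\mathcal H^{\{2T\}}_B\bigr)\geq -1-Ch^{1/2}$ uniformly in $s$; and (b) the ``$o(1)$'' in your combined inequality should be read as ``small provided first $T$ is fixed large enough that $C/T^2<\epsilon_0-\alpha^2$, and then $h$ is taken small depending on $T$,'' an order of quantifiers worth making explicit since $T$ also enters the trivial bound $\|\chi_1 w\|^2\leq e^{4\alpha T}$.
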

Hence, this theorem is a quantitative version of the statement that the boundary is a well (in analogy with the Schr\"odinger model in \cite{HSj}) as $h\to 0$.
\subsection{Spectral reduction}\label{sec.spec.red}
We can  explicitly derive a reduction near each component of the boundary. From now on we assume for simplification that the boundary is connected. Given $\delta \in (0,\delta_0)$ (with $\delta_0>0$ small enough), we introduce the $\delta$-neighborhood of the boundary
\begin{equation}\label{eq.delta.neighbor}
\mathcal V_{\delta}=\{x\in\Omega~:~{\rm dist}(x,\Gamma)<\delta\}\,,
\end{equation}
and the quadratic form, defined on the variational space
\[V_{\delta}=\{u\in H^1(\mathcal V_{\delta})~:~u(x)=0\,,\quad\mbox{ for all } x\in\Omega \mbox{ such that } {\rm dist}(x,\Gamma)=\delta\}\,,\]
by the formula
\[\forall u\in V_{\delta}\,,\qquad\mathcal{Q}_{h}^{\{\delta\}}(u)=\int_{\mathcal{V}_{\delta}}|h\nabla u|^2 dx-h^{\frac{3}{2}}\int_{\Gamma}|u|^2\,ds(x)\,.\]
\begin{remark}\label{rem.choice-d}
In the following we will be led to take $\delta= Dh^{\rho}$ with $\rho\in\left(0,\frac{1}{4}\right]$. We will choose
\begin{enumerate}[-]
\item either $\rho <\frac 14$ with $D=1$,
\item or $\rho=\frac 14$ and $D> \mathsf{S}$ where $\mathsf S$ is defined in \eqref{defS} in order that the error term in \eqref{complem} is smaller than the tunneling effect, we want to measure.
\end{enumerate}
\end{remark}
Let us denote by $\mu^{\{\delta\}}_n(h)$ the $n$-th eigenvalue of the corresponding operator $\mathcal{L}_{h}^{\{\delta\}}$. It is then standard (cf. \cite{HSj}) to deduce from the Agmon estimates in Theorem~\ref{thm:dec} the following proposition.
\begin{proposition}\label{prop:red-bnd}
Let $\epsilon_{0}\in(0,1)$ and $\alpha\in(0,\sqrt{\epsilon_{0}})$.There exist constants $C>0$, $h_0\in(0,1)$ such that, for all $h\in(0,h_0)$, $\delta\in(0,\delta_{0})$, $n\geq 1$ such that $\mu_{n}(h)\leq-\epsilon_{0}h$,
\begin{equation}\label{complem}
\mu_n(h)\leq \mu^{\{\delta\}}_n(h)\leq \mu_n(h)+C\exp\left(-\alpha  \delta h^{-\frac 12} \right)\,.
\end{equation}
\end{proposition}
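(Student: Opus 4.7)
The plan is to prove the two bounds separately. The lower bound $\mu_n(h)\le\mu_n^{\{\delta\}}(h)$ is immediate from the min-max principle: any $u\in V_\delta$ extends by zero to a function $\tilde u\in H^1(\Omega)$ — the trace on $\{\dist(\cdot,\Gamma)=\delta\}\subset\Omega$ vanishes, so no jump appears — and this extension satisfies $\mathcal{Q}_{h}(\tilde u)=\mathcal{Q}_{h}^{\{\delta\}}(u)$ and $\|\tilde u\|=\|u\|$, giving an isometric injection of $V_\delta$ into the form domain of $\mathcal{L}_h$.

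For the upper bound, I would pick an $L^2$-orthonormal family $u_1,\ldots,u_n$ of eigenfunctions of $\mathcal{L}_h$ associated with $\mu_1(h)\le\cdots\le\mu_n(h)\le-\epsilon_0 h$, and a smooth cutoff $\chi_\delta\colon\Omega\to[0,1]$ depending only on $\dist(\cdot,\Gamma)$, equal to $1$ on $\mathcal{V}_{\delta/2}$, supported in $\mathcal{V}_\delta$, with $|\nabla\chi_\delta|\le C\delta^{-1}$. The functions $v_j:=\chi_\delta u_j$ belong to $V_\delta$: the support condition yields Dirichlet vanishing at distance $\delta$, while the constancy of $\chi_\delta$ near $\Gamma$ ensures compatibility with the Robin trace relation built into the form domain of $\mathcal{L}_h$. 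A short polarization using the identity
\begin{equation*}
\nabla(\chi_\delta u)\cdot\nabla(\chi_\delta w)=\tfrac{1}{2}\bigl[\nabla u\cdot\nabla(\chi_\delta^2 w)+\nabla w\cdot\nabla(\chi_\delta^2 u)\bigr]+|\nabla\chi_\delta|^2 uw
\end{equation*}
together with the eigenvalue equation yields
\begin{equation*}
\mathcal{Q}_{h}^{\{\delta\}}(v_j,v_k)=\frac{\mu_j(h)+\mu_k(h)}{2}\langle v_j,v_k\rangle+h^2\int_\Omega|\nabla\chi_\delta|^2\,u_j u_k\,dx.
\end{equation*}
The error integral is supported in $\{\delta/2\le\dist(x,\Gamma)\le\delta\}$, and Theorem~\ref{thm:dec} applied with an exponent $\alpha'\in(\alpha,\sqrt{\epsilon_0})$ bounds it by $Ch^2\delta^{-2}\exp(-\alpha'\delta h^{-1/2})$; the polynomial prefactor is absorbed into $\exp(-\alpha\delta h^{-1/2})$ by the slack $\alpha'-\alpha>0$. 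The Gram defect $\langle v_j,v_k\rangle-\delta_{jk}=-\int_\Omega(1-\chi_\delta^2)u_j u_k\,dx$ is estimated in the same way.

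To conclude, let $A_{jk}=\mathcal{Q}_{h}^{\{\delta\}}(v_j,v_k)$ and $G_{jk}=\langle v_j,v_k\rangle$. The previous estimates give $G=I_n+R$ and $A=\mathrm{diag}(\mu_1(h),\ldots,\mu_n(h))+R'$ with $\|R\|+\|R'\|=\mathcal{O}(\exp(-\alpha\delta h^{-1/2}))$. The min-max principle applied to the $n$-dimensional subspace $\mathrm{span}(v_1,\ldots,v_n)\subset V_\delta$ bounds $\mu_n^{\{\delta\}}(h)$ by the largest eigenvalue of $G^{-1/2}AG^{-1/2}$, and Weyl's perturbation inequality then yields the stated bound $\mu_n^{\{\delta\}}(h)\le\mu_n(h)+C\exp(-\alpha\delta h^{-1/2})$.

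The principal points to watch are the compatibility of the cutoff with the Robin boundary condition (handled by choosing $\chi_\delta$ depending only on the normal variable and constant near $\Gamma$) and the absorption of the polynomial prefactor $h^2\delta^{-2}$ — together with an $\mathcal{O}(h)$ factor from the cross terms, since $\mu_j(h)=\mathcal{O}(h)$ — into the exponential. This absorption is harmless because $\alpha$ in the statement is an arbitrary number strictly below $\sqrt{\epsilon_0}$, leaving room to invoke Theorem~\ref{thm:dec} with a slightly larger exponent.
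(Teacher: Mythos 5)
Your proof is correct and is exactly the standard Helffer--Sj\"ostrand localization argument that the paper invokes without detail (extension by zero for the lower bound; IMS cutoff of the first $n$ eigenfunctions, Agmon decay from Theorem~\ref{thm:dec}, and a Gram-matrix comparison for the upper bound). The only caveat --- inherited from the paper's own statement, which nominally allows every $\delta\in(0,\delta_0)$ --- is that absorbing the prefactor $h^{2}\delta^{-2}$ into the exponential requires $\delta$ not to be too small relative to $h$; this is harmless in context, since all applications take $\delta=Dh^{\rho}$ with $\rho\le\frac{1}{4}$ (Remark~\ref{rem.choice-d}), so that $\delta h^{-1/2}\to\infty$.
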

\subsection{Boundary coordinates}\label{sec:app}
Thanks to Proposition \ref{prop:red-bnd}, we can now work with the operator $\mathcal{L}^{\{\delta\}}_{h}$, with the choice of $\delta$ made in Remark \ref{rem.choice-d}.
Since the functions of its domain are supported near $\Gamma$ we will use the canonical tubular coordinates $(s, t)$ where $s$ is the arc-length and $t$ the distance to the boundary. We recall some elementary properties of these coordinates. Let
\begin{equation}\label{eq:s}
(\R/2L\mathbb{Z})\ni s\mapsto M(s)\in\Gamma
\end{equation}
 be a parametrization of $\Gamma$ (and thus we will always work with $2L$-periodic functions sometimes restricted to the interval $(-L,L]$).  The unit tangent vector of $\Gamma$ at the point $M(s)$ of the boundary is given by
\[
T(s):= M^{\prime}(s).
\]
We define the  curvature $\kappa(s)$ by the following identity
\[
T^{\prime}(s)=\kappa(s)\, \nu(s),
\]
where $\nu(s)$ is the unit vector, normal to the
boundary, pointing outward at the point $M(s)$. We choose the
orientation of the parametrization $M$ to be counter-clockwise, so
\[
\det(T(s),\nu(s))=1, \qquad \forall s\in (\R/2L\mathbb{Z})\,.
\]
We introduce the change of coordinates
\begin{equation}
\Phi:(\R/2L\mathbb{Z})
\times(0,t_{0})\ni  (s,t)\mapsto x= M(s)-t\, \nu(s)\in \mathcal{V}_{\delta_{0}}.
\end{equation}
The determinant of the Jacobian of $\Phi$ is
given by
\begin{equation}\label{eq:Jac-a}
a(s,t)=1-t\kappa(s).
\end{equation}
In the case of symmetry, we choose as origin of the parametrization the point $p_0$ defined as follows
\[\{p_0=(x_0,y_0),\bar p_0=(\bar x_0,\bar y_0)\}=\Gamma\cap\{x=0\}\quad {\rm and~}y_0>\bar y_0\,,\]
i.e. we suppose that $s(p_0)=0$ and $s(\bar p_0)=L$. This is illustrated in Figure~1.
\begin{figure}\label{fig}
\begin{center}
\includegraphics[width=10cm]{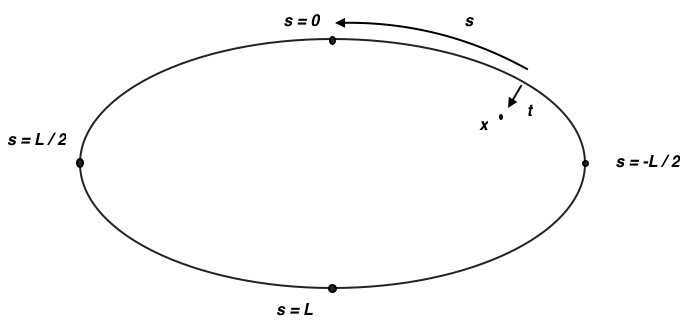} 
\end{center}
\caption{Illustration of the boundary coordinates $(s,t)$ for the point $x$. Note that for $t=0$ and $s\to-L$, the point $x$ approaches the boundary point $s=L$.}
\end{figure}

\subsection{The operator in a tubular neighborhood}
We can now express the operator in these new coordinates. To indicate that we work in the coordinates $(s, t)$, we put tildes on the functions. For all $u\in L^{2}(\mathcal V_{\delta_{0}})$, we define the pull-back function
\begin{equation}\label{eq:trans-st}
\widetilde u(s,t):= u(\Phi(s,t)).
\end{equation}
For all $u\in H^{1}(\mathcal{V}_{\delta_{0}})$, we have
\begin{equation}\label{eq:bc;n}
\int_{\mathcal{V}_{\delta_{0}}}|u|^{2}dx=\int|\widetilde u(s,t)|^{2}(1-t\kappa(s))\,ds dt\,,
\end{equation}
\begin{equation}\label{eq:bc;qf}
\int_{\mathcal{V}_{\delta_{0}}}|\nabla u|^{2}dx= \int \Big[(1-t\kappa(s))^{-2}|\partial_{s} \widetilde u|^{2} +|\partial_{t}\widetilde u|^{2}\Big](1-t\kappa(s))\,dsdt\,.
\end{equation}
The operator $\mathcal L^{\{\delta\}}_h$ is expressed in $(s,t)$ coordinates as
\[\mathcal L^{\{\delta\}}_h=-h^2a^{-1}\partial_s(a^{-1}\partial_s)-h^2a^{-1}\partial_t(a\partial_t)\,,\]
acting on $L^2(adsdt)$. In these coordinates, the Robin condition becomes
\[h^2\partial_tu=-h^{\frac 32}u\quad{\rm on}\quad t=0\,.\]
We introduce, for $\delta \in (0,\delta_0)$,
\begin{equation}\label{eq:red-bnd}
\begin{aligned}
&\widetilde{\mathcal V}_\delta=\{(s,t)~:~s\in ]-L, L]~{\rm and}~0<t< \delta\}\,, \\
&\widetilde V_\delta =\{u\in H^1(\widetilde{\mathcal V_\delta})~:~u(s,\delta)=0\}\,,\\
&\widetilde{\mathcal D}_\delta=\{u\in H^2(\widetilde{\mathcal V_{\delta}})\cap \widetilde V_\delta ~:~\partial_tu(s,0)=-h^{-\frac{1}{2}}u(s,0)\}\,,\\
&\widetilde{\mathcal{Q}}_{h}^{\{\delta\}}(u)=\int_{\widetilde{\mathcal V_\delta}}\Big(a^{-2}|h\partial_su|^2+|h\partial_tu|^2\Big)a\,dsdt-h^{\frac 32}\int |u(s,0)|^2\,ds\,,\\
&\widetilde{\mathcal L}_h^{\{\delta\}}= -h^2a^{-1}\partial_s(a^{-1}\partial_s)-h^2a^{-1}\partial_t(a\partial_t)\,.
\end{aligned}
\end{equation}
We now take \begin{equation}
\delta= D h^{\rho}\,,
\end{equation}
and write simply $\widetilde{\mathcal L}_h$ for $\widetilde{\mathcal L}_h^{\{\delta\}}$.
The operator $\widetilde{\mathcal L}_h$ with domain $\widetilde{\mathcal D}$ is the self-adjoint operator defined via the closed quadratic form $\widetilde{\mathcal V}_{\rho}\ni u\mapsto \widetilde{\mathcal{Q}}_{h}(u)$ by Friedrich's theorem.

\subsection{The rescaled operator}
In order to perform the analysis and to compare with existing strategies, it will be convenient to work with a rescaled version of $\widetilde{\mathcal{L}}_{h}$. We introduce the rescaling
\[(\sigma,\tau)=(s,h^{-\frac 12}t)\,,\]
the new semiclassical parameter $\hbar=h^{\frac 1 4}$ and the new weight
\begin{equation}\label{eq:Jac-a'}
\widehat a(\sigma,\tau)=1-h^{\frac{1}{2}}\tau\kappa(\sigma)\,.
\end{equation}
We consider rather the operator
\begin{equation}\label{eq.Lh-hat}
\widehat{\mathcal{L}}_{\hbar}=h^{-1}\widetilde{\mathcal{L}}_{h}\,,
\end{equation}
acting on $L^2(\widehat a\, d\sigma d\tau)$ and expressed in the coordinates $(\sigma,\tau)$. As in \eqref{eq:red-bnd}, we let
\begin{equation}\label{eq:dom-Lh-hat}
\begin{aligned}
&\widehat{\mathcal V}_{T }=\{(\sigma,\tau)~:~\sigma\in ]-L,L]~{\rm and}~0<\tau<
 T  \}\,, \\
&\widehat V_{T }=\{u\in H^1(\widehat{\mathcal V_{T }})~:~u(\sigma, T ) =0\}\,,\\
&\widehat{\mathcal D}_{T }=\{u\in H^2(\widehat{\mathcal V_{T }})\cap \widehat V_{T }~:~\partial_\tau u(\sigma,0)=-u(\sigma,0)\}\,,\\
&\widehat{\mathcal{Q}}_{\hbar}^{T }(u)=\int_{\widehat{\mathcal V_{T }}}\Big(\widehat a^{-2}\hbar^4|\partial_\sigma u|^2+|\partial_\tau u|^2\Big)\widehat a \, d\sigma d\tau-\int_{-L}^L |u(\sigma,0)|^2\,d\sigma\,,\\
&\widehat{\mathcal{L}}_\hbar^{T }=-\hbar^4\,\widehat a^{-1}\partial_\sigma\,\widehat a^{-1}\partial_\sigma-\widehat\, a^{-1}\partial_\tau\widehat a\partial_\tau\,.
\end{aligned}
\end{equation}
\begin{remark}
We then specify the analysis for $$T  = h^{-\frac 12} \delta=Dh^{\rho-\frac{1}{2}}  \,$$ and omit the reference to $T $.
\end{remark}

\section{Simple mini-well}\label{sec.miniwell}
This section is devoted to the analysis of the eigenfunctions when the curvature has a unique non degenerate maximum (i.e. Assumptions \ref{hyp.main1} and \ref{hyp.main2} with $M=1$). We will investigate both the WKB constructions and the accurate approximation of the eigenfunctions in such a situation. For that purpose, we will constantly work with the operator $\widehat{\mathcal L}_{\rig}$ defined in the sequel.
\subsection{Definition of the simple mini-well operator}\label{subsec.defminiwell}
Let $\omega$ be an (open) interval in the circle of length $2L$ identified with the interval $(-L,L]$. We can view $\omega $ as a (curved) segment in the boundary of $\Omega$ by means of the parametrization in \eqref{eq:s}. The operator $\widehat{\mathcal L}_{\rig, \hbar}^T=\widehat{\mathcal L}_{\rig}$ is defined as follows. We assume that $\omega$  contains a unique point $s_{\omega}$ of maximum curvature (i.e. $\kappa(s_{\omega})=\kappa_{\max})$  that is non degenerate. The form domain $\widehat{V}_{\rig}$ and the domain $\widehat{\mathcal D}_{\rig}$ of this operator are defined as follows,
\begin{equation}\label{eq:dom-right}
\begin{aligned}
&\widehat{\mathcal  V}_{\rig}=\omega\times(0,T)\,,\\
&\widehat {V}_{\rig}=\{u\in H^1(\widehat{\mathcal V}_{\rig})~:~u=0~{\rm on~}\tau=T   {\rm ~and~}\partial\omega\times(0,T)\}\,,\\
&\widehat{\mathcal{D}}_{\rig}=\{u\in H^2(\widehat{\mathcal V}_{\rig})\cap \widehat{V}_{\rig}~:~\partial_\tau u=-u~{\rm on~}\tau=0\}\,.
\end{aligned}
\end{equation}
The operator $\widehat{\mathcal L}_{\rig}$ is the self-adjoint operator on $L^2(\widehat{\mathcal V}_{\rig}; \widehat a\, d\sigma d \tau)$ with domain $\widehat{\mathcal{D}}_{\rig}$ and
\begin{equation}\label{eq:op-right}
\widehat{\mathcal L}_{\rig}=-\hbar^4\,\widehat a^{-1} (\partial_\sigma\widehat a^{-1})\partial_\sigma-\widehat a^{-1}(\partial_\tau\widehat a)\partial_\tau\,.
\end{equation}
We denote by $\mu_{\rig}(\hbar)$ its lowest eigenvalue. 
\begin{definition}
The corresponding positive and $L^2$-normalized eigenfunction is denoted by $\phi_{\hbar,\rig}$.
\end{definition}
Let $\mu_{2,\rig}(\hbar)$ be the second eigenvalue of the operator $\widehat{\mathcal L}_{\rig}$. The analysis in \cite{HK-tams} yields that, for $\hbar$ small, $\mu_\rig(\hbar)$  is a simple eigenvalue  and
\begin{equation}\label{eq:sp-gap.1well}
\mu_{2,\rig}(\hbar)-\mu_{\rig}(\hbar)=3\gamma\,\hbar^{7/4}+\hbar^{7/4}o(1)\quad\mbox{ as }\hbar\to0_+\,,
\end{equation}
where $\gamma=\sqrt{\frac{-\kappa''(s_{\omega})}2}$.

\subsection{Reminder of the WKB constructions}\label{subsec.WKB}

\subsubsection{Statements}
In this section, we recall the WKB construction of \cite{HK-tams} in the spirit of the paper by Bonnaillie-No\"el--H\'erau--Raymond \cite{BHR} (see also the classical references about the Born-Oppenheimer approximation \cite{BOp,Lef, Mar}).

\begin{proposition}\label{prop.WKB}
There exists a sequence of smooth functions $(a_{j})$ such that the following holds. We consider the formal series (or a smooth realization constructed by a Borel procedure)
\begin{equation}\label{eq.WKB-quasimode}
\Psi_{\hbar,\rig}(\sigma,\tau)\sim \hbar^{-\frac{1}{4}}e^{-\Phi_{\rig}(\sigma)/\hbar}\sum_{j\geq 0} \hbar^j a_{j}(\sigma,\tau)\,,
\end{equation}
where
\begin{enumerate}[\rm i)]
\item $\Phi_{\rig}$ is the Agmon distance to the well at $\sigma=s_{\rig}$ of the effective potential
\[\mathfrak v (\sigma)=\kappa_{\max}-\kappa(\sigma)\]
and defined by the formula
\[
 \Phi_{\rig}(\sigma)=\int_{[s_{\rig}, \sigma]}\sqrt{\mathfrak v (\tilde\sigma)}\,  d\tilde\sigma\,,\]
\item $a_{0}$ is in the form $a_{0}(\sigma,\tau)=\xi_{0,\rig}(\sigma)u_{0}(\tau)$ where
\[u_{0}(\tau)=\sqrt{2} e^{-\tau}\,,\]
and
\[\xi_{0,\rig}(\sigma)=\xi_0(\sigma)=\left(\frac{\gamma}{\pi}\right)^{\frac{1}{4}}\exp\left(-\int_{s_{\rig}}^\sigma \frac{\Phi_{ \omega}{''}-\gamma}{2\Phi_{\omega}{'}}d\tilde\sigma\right)\]
is the solution of the transport equation of the effective Hamiltonian
\[\Phi_{\rig}'\partial_{\sigma} \xi_{0}+\partial_{\sigma}(\Phi'_{\rig} \xi_{0})=\gamma\xi_{0}\,, \qquad\mbox{ with }\gamma=\sqrt{\frac{-\kappa''(s_{\omega})}{2}}\,.\]
\item For $j\geq 1$, $a_j(\sigma,\tau)$ is a linear combination of functions of the form
\[f_{j,k}(\sigma)g_{j,k}(\tau)\,,\quad f_{j,k}\in \mathcal{C}^\infty\big(\omega \big)~{\rm and~}g_{j,k}\in \mathcal S(\overline{\R_+})\,.\]
\item The formal series $\Psi_{\hbar,\rig}$ satisfies
\[e^{\Phi_{\rig}/\hbar}\left(\widehat{\mathcal L}_{\rig}-\mu\right)\Psi_{\hbar,\rig}=\mathcal{O}(\hbar^{\infty})\,,\]
where $\mu$ is an asymptotic series in the form
\begin{equation}\label{eq:Tay-mu}
\mu\sim-1-\kappa_{\max}\hbar^2+\gamma\hbar^3+\sum_{j\geq 4} \mu_{j}\hbar^{j}\,.
\end{equation}
This series is the Taylor series of the first eigenvalue  $\mu_{\rig}(\hbar)$.
\end{enumerate}
\end{proposition}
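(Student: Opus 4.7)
The approach is the standard Born--Oppenheimer WKB procedure: plug the ansatz into $(\widehat{\mathcal L}_\rig - \mu)\Psi_{\hbar,\rig}$, factor out the exponential profile $e^{-\Phi_\rig/\hbar}$, expand in powers of $\hbar$, and solve the resulting hierarchy of transverse Robin problems order by order. Using $\widehat a = 1 - \hbar^2\tau\kappa(\sigma)$ and $e^{\Phi_\rig/\hbar}\partial_\sigma e^{-\Phi_\rig/\hbar} = \partial_\sigma - \hbar^{-1}\Phi_\rig'$, a direct computation gives
\begin{equation*}
e^{\Phi_\rig/\hbar}\widehat{\mathcal L}_\rig e^{-\Phi_\rig/\hbar} \;\sim\; \mathcal L_0 + \hbar^2\mathcal L_2 + \hbar^3\mathcal L_3 + \sum_{j\geq 4}\hbar^j\mathcal L_j,
\end{equation*}
with $\mathcal L_0 = -\partial_\tau^2$, $\mathcal L_2 = -(\Phi_\rig')^2 + \kappa(\sigma)\partial_\tau$, $\mathcal L_3 = 2\Phi_\rig'\partial_\sigma + \Phi_\rig''$, and the higher $\mathcal L_j$ are differential polynomials in $\partial_\sigma,\partial_\tau$ with smooth $\sigma$-coefficients polynomially bounded in $\tau$. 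Writing $\mu \sim \sum_{j\geq 0}\mu_j\hbar^j$ with $\mu_0 = -1$ and matching coefficients of $\hbar^k$ produces the hierarchy
\begin{equation*}
(\mathcal L_0 + 1)\,a_k \;=\; F_k[a_0,\ldots,a_{k-1};\mu_2,\ldots,\mu_k],
\end{equation*}
each equation being an inhomogeneous Robin problem in $\tau$ parametrized by $\sigma$.

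Order $\hbar^0$ forces $a_0(\sigma,\tau) = \xi_0(\sigma)u_0(\tau)$. Since $u_0$ spans $\ker(\mathcal H_0 + 1)$ on the Robin half-line, the Fredholm solvability condition at each higher order is $F_k \perp u_0$ in $L^2(\R_+)$, and this produces a scalar equation on $\omega$ that successively pins down the unknowns. At $\hbar^2$ it collapses to $(\Phi_\rig')^2 + \kappa = \kappa_{\max}$, i.e.\ the eikonal equation $(\Phi_\rig')^2 = \mathfrak v$, whose positive solution vanishing at $s_\rig$ is the Agmon distance of (i). At $\hbar^3$, using $\mathcal L_3(\xi_0 u_0) = (2\Phi_\rig'\xi_0' + \Phi_\rig''\xi_0)u_0$ together with the cancellation already secured at order $\hbar^2$, one obtains the transport equation $2\Phi_\rig'\xi_0' + \Phi_\rig''\xi_0 = \mu_3\xi_0$; since $\Phi_\rig''(s_\rig) = \gamma$, requiring a smooth solution across the simple turning point $s_\rig$ forces $\mu_3 = \gamma$, and the unique solution normalized by $\xi_0(s_\rig) = (\gamma/\pi)^{1/4}$ is the $\xi_{0,\rig}$ of (ii).

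Orders $\hbar^k$ with $k\geq 4$ are solved by the same pattern: the solvability condition is a first-order linear inhomogeneous ODE for the next undetermined tangential amplitude, with source depending on the previously constructed objects and linearly on $\mu_k$; regularity at $s_\rig$ uniquely fixes $\mu_k$. The component of $a_k$ orthogonal to $u_0$ is then recovered by inverting $\mathcal H_0 + 1$ on $\{u_0\}^\perp$, which is well-defined since $-1$ is a simple eigenvalue. Inductively the $a_j$ acquire the tensor-sum structure required in (iii), with Schwartz decay in $\tau$ inherited from $u_0$. A standard Borel resummation produces a smooth realization satisfying (iv); the Dirichlet cutoff at $\tau = T$ is absorbed into the $\mathcal O(\hbar^\infty)$ remainder since $T \geq c\hbar^{-1}$ and the coefficients decay rapidly in $\tau$. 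The main technical delicacy is preserving smoothness of the tangential amplitudes across the simple turning point $s_\rig$ where $\Phi_\rig'$ vanishes; this is handled exactly as in \cite{HK-tams,BHR}, using analyticity of the coefficients and the non-degeneracy of $s_\rig$ as a minimum of $\mathfrak v$.
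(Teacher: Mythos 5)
Your proof follows exactly the same route as the paper's: conjugate $\widehat{\mathcal L}_\rig$ by $e^{-\Phi_\rig/\hbar}$, Taylor-expand $\widehat a^{-1}$ and $\widehat a^{-2}$ in $\hbar^2\tau\kappa$, obtain the graded hierarchy $(\mathcal H_0+1)a_k = F_k$, and use the Fredholm alternative against $u_0$ to extract the eikonal equation at $\hbar^2$ and the transport equation at $\hbar^3$ (your $\mathcal L_2,\mathcal L_3$ coincide with the paper's $Q_2^\vartheta,Q_3^\vartheta$). One small inaccuracy: the smooth continuation of the transport amplitudes across the turning point $s_\rig$ does not rely on analyticity of the coefficients (the paper's hypotheses are only $\mathcal C^\infty$); it follows from the non-degeneracy $\Phi_\rig''(s_\rig)=\gamma>0$ and the choice of $\mu_k$ that eliminates the singular part, exactly as in the references you cite.
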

In the previous proposition, we have used the following notation.
\begin{notation} \label{formalfunctions}
We write $a(\sigma,\tau ;\hbar)\underset{\hbar\to 0}{\sim}\sum_{j\geq 0}a_{j}(\sigma,\tau) \hbar^j$ when for all $J\geq 0$, $\alpha \in \N^{2}$ and all compact $K\subset\omega\times\overline{\R_{+}}$, there exist $\hbar_{J,\alpha, K}>0$ and $C_{J,\alpha, K}>0$ such that for all $\hbar\in(0,\hbar_{J,\alpha, K})$, we have, on $K$,
\[\Big|\partial^{\alpha} \Big( a(\sigma,\tau ; \hbar)-\sum^J_{j= 0}a_{j}(\sigma,\tau)\hbar^j\Big)\Big|\leq C_{J,\alpha, K}\hbar^{J+1}\,.\]
We also write $a = \mathcal{O}(\hbar^\infty)$ when all the coefficients in the series are zero.
\end{notation}

\begin{remark}\label{cor.WKB}
In the sequel, it will be convenient to work with a truncated version of $\Psi_{\hbar, \omega}$. Let $\check\omega$ be an open interval such that $s_{\omega}\in \check\omega\subset \overline{\check \omega}\subset\omega$ and
\begin{equation}\label{eq.WKB-quasimode'}
\psi_{\hbar,\omega,{\check\omega}}(\sigma,\tau)=\chi_{{\check\omega}}(\sigma)\chi(T^{-1}\tau)\Psi_{\hbar,\rig}(\sigma,\tau)\,,
\end{equation}
where
\begin{enumerate}[\rm i)]
\item $\chi$ is a smooth function cut-off function with compact support being $1$ near $0$\,;
\item $\chi_{{\check\omega}}\in C_c^\infty(\rig)$ is a smooth cut of function satisfying  $0\leq \chi_{{\check\omega}}\leq 1$ and $\chi_{{\check\omega}}=1$ on  ${\check \omega}$.
\end{enumerate}
The truncated function  $\psi_{\hbar,{\check\omega}}$ satisfies
\[e^{\Phi_{\rig}/\hbar}\left(\widehat{\mathcal L}_{\rig}-\mu\right)\psi_{\hbar,\omega, {\check \omega}}=\mathcal{O}(\hbar^\infty)\,,\qquad \mbox{ \rm in }L^2(\widehat{\mathcal  V}_{{\check \omega}})\,,\]
\[\forall~j\in\{1,2\}\,,\quad
e^{\Phi_{\rig}/\hbar}\partial_\tau^j\left(\widehat{\mathcal L}_{\rig}-\mu\right)\psi_{\hbar,\omega, {\check \omega}}= \mathcal{O}(\hbar^\infty)\,,\qquad \mbox{ \rm in }L^2(\widehat{\mathcal  V}_{{\check \omega}})\,.\]
In the sequel we will use that $\check \omega$ and $\omega$ can be chosen as large as we want, as soon as $\omega$ only contains one mini-well  and $\check \omega$ satisfies the above condition.
\end{remark}

\subsubsection{Proof}
Let us just explain the main steps in the proof of Proposition \ref{prop.WKB}.
Thanks to a formal Taylor expansion, we find the following expansion of the operator $\widehat{\mathcal{L}}_\hbar$,
\begin{multline*}
\widehat{\mathcal{L}}_\hbar\sim -\partial_\tau^2-\hbar^4\partial_\sigma^2+2\hbar^{6}\tau\kappa(\sigma)\partial_\sigma^2+\hbar^{2}\kappa(\sigma)\partial_\tau+\hbar^{6}\tau\kappa'(\sigma)\partial_\sigma\\
-\sum_{j=1}^\infty c_j \hbar^{2j+4}\tau^j(\kappa(\sigma))^j\partial_\sigma^2+\sum_{j=1}^\infty \hbar^{2j+2}\tau^j(\kappa(\sigma))^{j+1}\partial_\tau-\kappa'(\sigma)\sum_{j=1}^\infty \hbar^{2j+6}d_j\tau^j(\kappa(\sigma))^j\partial_\sigma\,.
\end{multline*}
 We introduce the (formal) conjugate operator
\[\widehat{\mathcal{L}}_\hbar^\vartheta:=\exp \left( \frac{\vartheta (\sigma)}{\hbar}\right)  \,\widehat L_h\, \exp \left(- \frac{\vartheta (\sigma)}{\hbar}\right) \,,\]
and write
\begin{equation}\label{eq:WKB'}
\left( \widehat{\mathcal{L}}_\hbar^\vartheta - \mu\right) \left( \sum_\ell a_\ell (\sigma,\tau) \hbar^\ell\right)\sim 0 \,.
\end{equation}
We (formally) expand the operator $\widehat{\mathcal{L}}_\hbar^\vartheta$ as follows
\begin{equation}\label{eq:wkbL}
\widehat{\mathcal{L}}_\hbar^\vartheta \sim  \sum_{\ell=0}^\infty Q_\ell^\vartheta \,\hbar^\ell \,,
\end{equation}
with in particular
\begin{align*}
 &Q_0^\vartheta= -\partial_\tau^2\,,\\
 &Q_1^\vartheta =0\,,\\
 &Q_2^\vartheta  =  \kappa(\sigma) \partial_\tau  - \vartheta'(\sigma)^2 \,,\\
 &Q_3^\vartheta = 2 \vartheta'(\sigma)  \partial_\sigma + \vartheta'' (\sigma)\,,\\
 &Q_4^\vartheta = -\partial_\sigma^2+c_3\tau^3\kappa(\sigma)^3+\tau^3(\kappa(\sigma))^4\partial_\tau\,.\\
\end{align*}
We then rearrange all  the terms in \eqref{eq:WKB'} in the form of power series in $\hbar$ and select $\vartheta$,  $a_\ell (\sigma,\tau)$ and $\mu_\ell$ by  expressing the cancellation of each term of the formal series. The vanishing of the  coefficient of $\hbar^0$ yields the equation,
 \[(Q_0^\vartheta-\mu_0) a_0(\sigma,\tau) =0\,.\]
We have $Q_0^\vartheta=\mathsf{Id}\otimes \mathcal H_{0}$ on $L^2(\mathbb R_\sigma\times \mathbb R_{+,\tau})$. This leads us naturally (considering the operator $\mathcal H_{0}$ introduced in \eqref{defH00}) to the choice
\[\mu_0=-1\quad{\rm and}\quad a_0 (\sigma,\tau) = \xi_0(\sigma) u_0(\tau)\,.\]
Since $Q_1^\vartheta=0$, the vanishing of the  coefficient of $\hbar^1$ in \eqref{eq:wkbL}   yields
\[(Q_0^\vartheta -\mu_0) a_1(\sigma,\tau)-\mu_1a_1(\sigma,\tau) =0\,.\]
This leads us to the natural choice $\mu_1=0$ and
\[a_1(\sigma,\tau)=\xi_1(\sigma) u_0(\tau)\,.\]
We look at the coefficient of $\hbar^{2}$ and obtain
\[(Q_0^\vartheta-\mu_0) a_2 + (Q_2^\vartheta -\mu_2) a_0 =0\,.\]
Remembering that $\mu_0=-1$ and $a_0(\sigma,\tau)=\xi_0(\sigma)u_0(\tau)$, we get
\[(Q_0^\vartheta+1) a_2 =-u_0(\tau) (\kappa(\sigma)- \vartheta'(\sigma)^2-\mu_2) \xi_0(\sigma)\,.\]
By using the Fredholm condition with respect to $\tau$, we get the eikonal equation
\begin{equation}\label{eq:eik}
- \kappa(\sigma) - \vartheta'(\sigma)^2 -\mu_2 =0\,.
\end{equation}
Consequently, we  take $\mu_2=-\kappa(s_{\rig})$ and get $\vartheta'(s_{\rig})=0$ and we consider the solution such that $\vartheta''(s_{\rig})>0$. This gives
\begin{equation}\label{eq:v''}
\vartheta''(s_{\rig})=\sqrt{\frac{-\kappa''(s_{\omega})}{2}}\,,
\end{equation}
and
\begin{equation}\label{eq:v}
\vartheta(\sigma)=\int_{[s_{\omega},\sigma]}\sqrt{\kappa_{\max}-\kappa(\tilde\sigma)}\,d\tilde\sigma=\Phi_\rig(\sigma)\,,
\end{equation}
where $[s_{\omega},\sigma]$ is the segment joining $s_{\omega}$ and $\sigma$ counter-clockwise (the integral may also be understood as the Lebesgue integral on a measurable set, independently from the representation of the set).

We deduce that $a_2$ is in the form
\[a_2(\sigma,\tau)=\xi_2(\sigma) u_0(\tau)\,.\]
Now we look at the coefficient of $\hbar^{3}$ in
\eqref{eq:wkbL}. This yields
\[(Q_0^\vartheta+1) a_3 + (Q_2^\vartheta-\mu_2) a_1 + (Q_3^\vartheta -\mu_3)a_0 =0\,.\]
Using \eqref{eq:eik}, we see that the term $(Q_2^\vartheta-\mu_2) a_1$ vanishes and thus we get
\[(Q_0^\vartheta+1) a_3=- (Q_3^\vartheta -\mu_3)a_0\,.\]
For each fixed $\sigma$, the Fredholm condition implies that
\[\langle (Q_3^\vartheta -\mu_3)a_0, u_{0}\rangle_{L^2(\R_{+,\tau})}=0\,,\]
that is
\begin{equation}\label{eq:tr}
 2 \vartheta'(\sigma) \xi_0'(\sigma) + (\vartheta'' (\sigma) - \mu_3) \xi_0(\sigma)=0\,.
\end{equation}
Since we look for smooth solutions at $s_{\rig}$ and for the smallest possible $\mu_{3}$, the linearization at $\sigma=s_{\rig}$ leads to
\[\mu_3 = \sqrt{\frac{-\kappa''(s_{\omega})}{2}}=\gamma\,.\]
We can determine $\xi_0$  by solving \eqref{eq:tr} in a neighborhood of $\sigma=s_{\rig}$ and find
\[
\xi_0(\sigma) =\left(\frac{\gamma}{\pi}\right)^{\frac{1}{4}}\exp\left(-\int_{s_{\rig}}^\sigma \frac{\vartheta''-\gamma}{2\vartheta'}d\tilde\sigma\right)\,.
\]
where the constant is chosen to get a $L^2$-normalized quasimode (modulo $\hbar$).
 Then, we are led to choose
 \[a_3(\sigma,\tau) = \xi_3(\sigma) u_0(\tau)\,.\]
This construction may be continued at any order.

\subsection{Tangential Agmon's estimates}\label{subsec.tAgmon}
In this subsection, we derive Agmon's estimates for the eigenfunctions of the operator $\widehat{\mathcal L}_{\rig}$ with domain $\widehat{\mathcal D}_{\rig}$ and form domain $\widehat{V}_{\rig}$ introduced in \eqref{eq:dom-right}. Let us start with the following elementary lemma that is related to the Born-Oppenheimer approximation.
\begin{lemma}\label{lem:lb-qf}
There exist constants $C>0$ and $\hbar_0\in(0,1)$ such that, for all
$\hbar\in(0,\hbar_0)$ and $u\in\widehat{V}_{\rig}$,
\[
\widehat{\mathcal Q}_{\rig}(u)\geq \int_{\widehat {\mathcal V}_\rig}\widehat a^{-2}\hbar^4|\partial_\sigma u|^2\,\widehat{a}\, d\sigma d\tau +\int_{\widehat {\mathcal V}_\rig} \left(-1-\kappa_{\max}\hbar^{2}+\hbar^2 \mathfrak v (\sigma)-C\hbar^{4}\right)|u|^2\,\widehat{a}\, d\sigma d\tau\,.
\]
\end{lemma}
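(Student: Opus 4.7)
The plan is a direct Born--Oppenheimer-style reduction: for each frozen tangential variable $\sigma$, the $\tau$-dependent part of $\widehat{\mathcal Q}_\rig$ is exactly the one-dimensional weighted Robin quadratic form $q^{\{T\}}_{B(\sigma)}$ from Section~\ref{section4} with $B(\sigma):=\hbar^{2}\kappa(\sigma)$, and Proposition~\ref{lem:H0b;l} then delivers a lower bound of the required shape at essentially no cost.

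First I would write, by Fubini's theorem and the definition of $\widehat{\mathcal Q}_\rig$,
\begin{equation*}
\widehat{\mathcal Q}_\rig(u)=\int_{\widehat{\mathcal V}_\rig}\widehat a^{-2}\hbar^{4}|\partial_\sigma u|^2\,\widehat a\,d\sigma d\tau\;+\;\int_\omega q^{\{T\}}_{B(\sigma)}\!\big(u(\sigma,\cdot)\big)\,d\sigma\,,
\end{equation*}
noting that the weight $1-B(\sigma)\tau$ coincides with $\widehat a(\sigma,\tau)$, and that any $u\in\widehat V_\rig$ has, for almost every $\sigma\in\omega$, a slice $u(\sigma,\cdot)\in V^{\{T\}}_0$.

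Next I would apply Proposition~\ref{lem:H0b;l} slice by slice. With $T=Dh^{\rho-1/2}=D\hbar^{4\rho-2}$ and $\rho\in(0,1/4]$, one has $T\to\infty$ and $|B(\sigma)|T\leq D\kappa_{\max}\hbar^{4\rho}\to 0$, so the hypotheses $T\geq T_0$ and $|B|T<1/3$ of that proposition are met uniformly in $\sigma\in\omega$ for $\hbar$ small. Combined with the Rayleigh characterization of $\lambda_1(\mathcal H^{\{T\}}_{B(\sigma)})$, this yields
\begin{equation*}
q^{\{T\}}_{B(\sigma)}\!\big(u(\sigma,\cdot)\big)\geq\big(-1-\hbar^{2}\kappa(\sigma)-C\hbar^{4}\big)\int_0^T|u(\sigma,\tau)|^{2}\,\widehat a(\sigma,\tau)\,d\tau\,,
\end{equation*}
with a constant $C$ that is uniform in $\sigma$ since $|\kappa|\leq\kappa_{\max}$ on $\omega$.

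Finally I would integrate in $\sigma$ and substitute $-\kappa(\sigma)=-\kappa_{\max}+\mathfrak v(\sigma)$ inside the scalar prefactor, which rewrites it as $-1-\kappa_{\max}\hbar^{2}+\hbar^{2}\mathfrak v(\sigma)-C\hbar^{4}$ and produces the announced inequality. No substantive obstacle is expected: the lemma really is a pointwise-in-$\sigma$ consequence of the one-dimensional analysis of Section~\ref{section4}. The only bookkeeping point is to ensure that the smallness of $|B(\sigma)|T$ and the constant $C$ in Proposition~\ref{lem:H0b;l} are uniform in $\sigma$, which is immediate from $\rho>0$ and the boundedness of $\kappa$ on $\omega$.
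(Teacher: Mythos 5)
Your proof is correct and follows essentially the same route as the paper's: decompose $\widehat{\mathcal Q}_{\rig}$ into the tangential piece plus a slice-wise transverse form, identify the latter as $q^{\{T\}}_{B(\sigma)}$ with $B(\sigma)=\hbar^{2}\kappa(\sigma)$, bound it from below via the min-max principle by $\lambda_1\big(\mathcal H^{\{T\}}_{B(\sigma)}\big)$, invoke Proposition~\ref{lem:H0b;l} uniformly in $\sigma$, and finally substitute $\mathfrak v=\kappa_{\max}-\kappa$. You merely make explicit some bookkeeping the paper leaves implicit, namely the Fubini decomposition, the a.e.\ membership of the slices in $V^{\{T\}}_0$, and the uniform verification that $T\to\infty$ while $|B(\sigma)|T\to 0$.
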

\begin{proof}
Using \eqref{eq:dom-Lh-hat}, we have
\begin{equation}\label{eq.Qrig}
\widehat{\mathcal Q}_{\rig}(u)=\int_{-L}^L \int_{0}^{T  }\Big(\widehat a^{-2}\hbar^4|\partial_\sigma u|^2+|\partial_\tau u|^2\Big)\widehat a \, d\tau d\sigma-\int_{-L}^L |u(\sigma,0)|^2\,d\sigma \\
\end{equation}
Recall the operator in $\mathcal H^{\{T\}}_{B}$ in \eqref{eq:H0b}. By
a simple scaling argument  and the min-max principle, we have
\begin{equation}\label{eq:imp;est}
\int_0^{T  }|\partial_\tau u|^2\,\widehat{a}d\tau-|u(\sigma,0)|^2 \geq \lambda_1(\mathcal
H^{\{T\}}_{B})\int_0^{T}|u|^2\,\widehat{a}d\tau\,,\qquad T=h^{\rho- \frac{1}{2}}\,,\quad B=h^{\frac{1}{2}}\kappa(\sigma)\,.
\end{equation}
Thanks to  Proposition~\ref{lem:H0b;l}, we deduce the lower bound since $\mathfrak v =\kappa_{\max}-\kappa$.
\end{proof}
From Lemma \ref{lem:lb-qf}, we may deduce some accurate tangential Agmon  estimates satisfied by $\phi_{\hbar,\rig}$. We will often use the following notation.
\begin{notation}\label{not.Bhat}
For $\varrho\in(0, L)$, we let
\[\mathcal B_{\rig}(\varrho)=(-\varrho+s_\rig,\varrho +s_\rig)\quad{\rm and}\quad \widehat{\mathcal B}_\rig(\varrho)=\mathcal B_{\rig}(\varrho)\times (0,T)\,.\]
\end{notation}

Let us first state a proposition that will be convenient in the sequel.
\begin{proposition}\label{prop.Agmon}
Suppose that $T= Dh^{-\frac14}$ and $D>\mathsf S$. Let $\Phi$ be a Lipschitzian function that is a subsolution of the eikonal equation:
\[\mathfrak v(\sigma)-|\Phi'(\sigma)|^2\geq 0\,,\qquad \forall \sigma\in\omega\,,\]
and let us assume that there exist a non decreasing function $\R_{+}\ni R\mapsto M(R)\in\R_{+}$ tending to $+\infty$ as $R\to+\infty$, a positive constant $\hbar_{0}$ such that, for all $\hbar\in (0,\hbar_{0})$, and $R>0$,
\begin{align*}
\mathfrak v (\sigma)-|\Phi'(\sigma)|^2&\geq M(R)\hbar\,,                         &\forall \sigma\in\omega\cap\complement\mathcal{B}_{\rig}(R\hbar^{\frac{1}{2}})\,,\\
|\Phi(\sigma)|                        &\leq M(R)\hbar\,,                          &\forall\sigma\in\mathcal{B}_{\rig}(R\hbar^{\frac{1}{2}})\,.
\end{align*}
Then, there exist $R_{0}, C>0$ and $\tilde\hbar_{0}\in(0,\hbar_{0})$ such that the following holds. For all $R\geq R_{0}$, $C_{0}\in(0,\frac{M(R)}{2})$, $\hbar\in(0,\tilde\hbar_{0})$, $z\in[-1-\kappa_{\max}\hbar^2, -1-\kappa_{\max}\hbar^2+C_{0}\hbar^3]$, $u\in\widehat{\mathcal{D}}_{\rig}$,
\begin{equation}\label{eq.estimate-weight1}
\hbar^3\|e^{\Phi/\hbar}u\|_{L^2(\widehat{\mathcal  V}_{\rig})}\leq C\|e^{\Phi/\hbar}\left(\widehat{\mathcal{L}}_{\rig}-z\right)u\|_{L^2(\widehat{\mathcal  V}_{\rig})}+C\hbar^3\|u\|_{L^2(\widehat{\mathcal  V}_{\rig}\cap\hat{\mathcal{B}}_{\rig}(R\hbar^{\frac{1}{2}}))}\,,
\end{equation}
and
\begin{equation}\label{eq.estimate-weight2}
\hbar^4\|\partial_{\sigma}(e^{\Phi/\hbar}u)\|^2_{L^2(\widehat{\mathcal  V}_{\rig})}\leq C\hbar^{-3}\|e^{\Phi/\hbar}\left(\widehat{\mathcal{L}}_{\rig}-z\right)u\|^2_{L^2(\widehat{\mathcal  V}_{\rig})}+C\hbar^3\|u\|^2_{L^2(\widehat{\mathcal  V}_{\rig}\cap\hat{\mathcal{B}}_{\rig}(R\hbar^{\frac{1}{2}}))}\,.
\end{equation}
\end{proposition}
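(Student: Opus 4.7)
The plan is to combine the standard Agmon weighted identity with the Born--Oppenheimer-type quadratic form lower bound of Lemma~\ref{lem:lb-qf}. Set $v=e^{\Phi/\hbar}u$. Since $\Phi$ depends only on $\sigma$, a direct integration by parts -- using the Robin boundary condition at $\tau=0$ and the Dirichlet conditions at $\tau=T$ and on $\partial\omega\times(0,T)$ -- yields the identity
\[
\mathrm{Re}\,\langle(\widehat{\mathcal L}_{\rig}-z)u,\,e^{2\Phi/\hbar}u\rangle_{\widehat a\,d\sigma d\tau}
= \widehat{\mathcal Q}_{\rig}(v) - \hbar^2\!\int_{\widehat{\mathcal V}_{\rig}}\widehat a^{-2}|\Phi'|^2|v|^2\,\widehat a\,d\sigma d\tau - z\,\|v\|_{L^2(\widehat a\,d\sigma d\tau)}^{2},
\]
modulo terms of size $\mathcal O(\hbar^{4})\|v\|^{2}$ arising from the $\sigma$-derivatives of $\widehat a = 1-\hbar^{2}\tau\kappa$, which are absorbable.

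Applying Lemma~\ref{lem:lb-qf} to $\widehat{\mathcal Q}_{\rig}(v)$ and using $z\leq -1-\kappa_{\max}\hbar^{2}+C_{0}\hbar^{3}$, so that $-1-\kappa_{\max}\hbar^{2}-z\geq -C_{0}\hbar^{3}$, the identity becomes the lower bound
\[
\mathrm{Re}\,\langle(\widehat{\mathcal L}_{\rig}-z)u,\,e^{2\Phi/\hbar}u\rangle_{\widehat a}
\;\geq\; \hbar^{4}\!\int\widehat a^{-2}|\partial_{\sigma}v|^{2}\widehat a\,d\sigma d\tau \;+\; \int\!\bigl[\hbar^{2}(\mathfrak v-|\Phi'|^{2})-C_{0}\hbar^{3}-C\hbar^{4}\bigr]|v|^{2}\widehat a\,d\sigma d\tau.
\]
Now split the $\sigma$-integral along the dichotomy in the hypothesis. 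On $\omega\setminus\mathcal B_{\rig}(R\hbar^{1/2})$ the eikonal lower bound yields $\hbar^{2}(\mathfrak v-|\Phi'|^{2})\geq M(R)\hbar^{3}$, so the bracket is $\geq(M(R)-C_{0}-C\hbar)\hbar^{3}\geq(M(R)/2)\hbar^{3}$ provided $R$ is chosen large enough that $M(R)>2C_{0}$, which is possible exactly by the hypothesis $C_{0}<M(R)/2$ together with $M(R)\to\infty$. On $\mathcal B_{\rig}(R\hbar^{1/2})$ the subsolution inequality $\mathfrak v\geq|\Phi'|^{2}$ and the uniform bound $|\Phi|\leq M(R)\hbar$ (giving the pointwise control $|v|^{2}\leq e^{2M(R)}|u|^{2}$) convert the $-C_{0}\hbar^{3}$ defect into the error $-C\hbar^{3}\|u\|^{2}_{L^{2}(\widehat{\mathcal V}_{\rig}\cap\widehat{\mathcal B}_{\rig}(R\hbar^{1/2}))}$.

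Collecting the contributions gives
\[
c\hbar^{3}\|v\|^{2}_{L^{2}(\widehat{\mathcal V}_{\rig})} + \hbar^{4}\|\partial_{\sigma}v\|^{2}_{L^{2}(\widehat{\mathcal V}_{\rig})}
\;\leq\; \mathrm{Re}\,\langle(\widehat{\mathcal L}_{\rig}-z)u,\,e^{2\Phi/\hbar}u\rangle + C\hbar^{3}\|u\|^{2}_{L^{2}(\widehat{\mathcal V}_{\rig}\cap\widehat{\mathcal B}_{\rig}(R\hbar^{1/2}))}
\]
with $c=M(R)/2-C_{0}>0$, after using the pointwise bound on the near region to recover $\|v\|^{2}$ on the whole of $\widehat{\mathcal V}_{\rig}$. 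Cauchy--Schwarz $|\mathrm{Re}\langle\cdot,\cdot\rangle|\leq\|e^{\Phi/\hbar}(\widehat{\mathcal L}_{\rig}-z)u\|\cdot\|v\|$ followed by Young's inequality at scale $\hbar^{3}$ absorbs the $\|v\|$ factor on the right and yields \eqref{eq.estimate-weight1}. For \eqref{eq.estimate-weight2}, retain the $\hbar^{4}\|\partial_{\sigma}v\|^{2}$ term in the same chain of inequalities and substitute \eqref{eq.estimate-weight1} to bound the residual $\|v\|^{2}$ factor. The main technical obstacle is the bookkeeping of three scales -- the tangential Agmon scale $\hbar^{1/2}$, the spectral window of size $C_{0}\hbar^{3}$, and the $\mathcal O(\hbar^{4})$ subprincipal correction from Lemma~\ref{lem:lb-qf} -- and the observation that the hypothesis $M(R)\to\infty$ is exactly what allows the choice $R=R_{0}$ to be made independently of all these correction constants.
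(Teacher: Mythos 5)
Your proposal follows the same route as the paper: the weighted Agmon identity, the Born--Oppenheimer lower bound of Lemma~\ref{lem:lb-qf}, a split of the $\sigma$-integral along the near/far dichotomy, and Cauchy--Schwarz with Young's inequality at scale $\hbar^{3}$. Two bookkeeping points deserve correction. First, since $\Phi$ depends only on $\sigma$, the Agmon identity
\[
\langle\widehat{\mathcal{L}}_{\rig}u,\,e^{2\Phi/\hbar}u\rangle=\widehat{\mathcal{Q}}_{\rig}\bigl(e^{\Phi/\hbar}u\bigr)-\hbar^2\int_{\widehat{\mathcal V}_{\rig}} \widehat{a}^{-2}|\Phi'|^2\,|e^{\Phi/\hbar}u|^2\,\widehat{a}\,d\sigma\,d\tau
\]
is \emph{exact}: the only cross-terms generated are skew-symmetric, so there is no $\mathcal{O}(\hbar^{4})\|v\|^{2}$ commutator remainder; invoking one is a red herring (harmless, but indicative of a misconception about where the errors live). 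Second, and more substantively, after Lemma~\ref{lem:lb-qf} the weight term carries $\widehat{a}^{-2}|\Phi'|^2$, and you silently replace $\widehat{a}^{-2}$ by $1$. On $\widehat{\mathcal V}_{\rig}$ with $T=D\hbar^{-1}$ one has $|\widehat a^{-2}-1|\lesssim \hbar^{2}T=D\hbar$, so the resulting error is $\mathcal O(\hbar^{3})$, not the $\mathcal O(\hbar^{4})$ you allow for. This is precisely why the paper insists on $\rho=\tfrac14$ at this step: for any slower cut-off ($\rho<\tfrac14$) the deviation $\hbar^{4\rho}$ would dominate and the argument would break. Your conclusion is unchanged because this extra $\mathcal O(\hbar^{3})$ is absorbed on the far region by $M(R_0)\hbar^{3}$ once $R_{0}$ is large enough, but you should carry it explicitly alongside the $C_0\hbar^{3}$ spectral defect rather than hide it in the $\hbar^{4}$ term.
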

\begin{proof}
By the usual Agmon formula, we get
\[\langle\widehat{\mathcal{L}}_{\rig}u,e^{2\Phi/\hbar}u\rangle=\widehat{\mathcal{Q}}_{\rig}(e^{\Phi/\hbar}u)-\hbar^2\int_{\widehat{\mathcal  V}_{\rig}} \widehat{a}^{-2}|\Phi'|^2e^{2\Phi/\hbar}|u|^2\, \widehat{a}\,d\sigma d\tau\,.\]
By Lemma \ref{lem:lb-qf}, we deduce
\begin{multline*}
\langle\widehat{\mathcal{L}}_{\rig}u,e^{2\Phi/\hbar}u\rangle\geq \int_{\widehat {\mathcal V}_\rig}\widehat a^{-2}\hbar^4|\partial_\sigma (e^{\Phi/\hbar}u)|^2\,\widehat{a}\, d\sigma d\tau\\
+\int_{\widehat {\mathcal V}_\rig} \left(-1-\kappa_{\max}\hbar^{2}+\hbar^2(\mathfrak v -\widehat{a}^{-2}|\Phi'|^2)-C\hbar^{4}\right)|e^{\Phi/\hbar}u|^2\,\widehat{a}\, d\sigma d\tau\,.
\end{multline*}
Note that, for all $(\sigma,\tau)\in\widehat{\mathcal  V}_{\rig}$, $|\hbar^2\kappa(\sigma)\tau|\leq D\hbar^{4\rho}|\kappa|_{\infty}$. Thus there exists $\tilde D,\tilde\hbar_{0}>0$ (depending only on $\rho$, $D$ and $|\kappa|_{\infty}$) such that, for all $\hbar\in(0,\tilde\hbar_{0})$ and $(\sigma,\tau)\in\widehat{\mathcal  V}_{\rig}$, 
\[\left|\widehat{a}^{-2}-1\right|\leq \tilde D\hbar^{4\rho}\,.\] 
This leads to choose $$\rho=\frac{1}{4}$$ and to the lower bound
\begin{multline*}
\langle(\widehat{\mathcal{L}}_{\rig}-z)u,e^{2\Phi/\hbar}u\rangle\geq  \int_{\widehat {\mathcal V}_\rig}\widehat a^{-2}\hbar^4|\partial_\sigma (e^{\Phi/\hbar}u)|^2\,\widehat{a}\, d\sigma d\tau\\
+\int_{\widehat {\mathcal V}_\rig} \left(-1-\kappa_{\max}\hbar^{2}+\hbar^2(\mathfrak v -|\Phi'|^2)-\tilde C\hbar^{3}-z\right)|e^{\Phi/\hbar}u|^2\,\widehat{a}\, d\sigma d\tau\,,
\end{multline*}
for some given constant $\tilde C>0\,$ independent of $R$.

Using the assumption on $z$, we deduce that
\begin{multline*}
\int_{\widehat {\mathcal V}_\rig}\widehat a^{-2}\hbar^4|\partial_\sigma (e^{\Phi/\hbar}u)|^2\,\widehat{a}\,d\sigma d\tau+\int_{\widehat {\mathcal V}_\rig} \left(\hbar^2(\mathfrak v -|\Phi'|^2)-\tilde C\hbar^{3}-\frac{M(R)}{2}\hbar^3\right)|e^{\Phi/\hbar}u|^2\,\widehat{a}\,d\sigma d\tau\\
\leq\|e^{\Phi/\hbar}u(\widehat{\mathcal{L}}_{\rig}-z)u\|\|e^{\Phi/\hbar}u\|\,.
\end{multline*}
Now we use the assumption on  the function $\Phi$ and obtain
\begin{multline*}
\int_{\widehat {\mathcal V}_\rig} \widehat a^{-2}\hbar^4|\partial_\sigma (e^{\Phi/\hbar}u)|^2\, \widehat a\, d\sigma d\tau+\left(\frac{M(R)}{2}-\tilde C\right)\int_{\widehat {\mathcal V}_\rig\setminus\hat{\mathcal B}_\rig(R\hbar^{1/2})}\hbar^3|e^{\Phi/\hbar}u|^2\, \widehat a\,  d\sigma d\tau\\
\leq \|e^{\Phi/\hbar}(\widehat{\mathcal{L}}_{\rig}-z)u\|\|e^{\Phi/\hbar}u\|+\tilde C\hbar^{3}\int_{\widehat{\mathcal  V}_{\rig}\cap\hat{\mathcal B}_\rig(R\hbar^{1/2})} |e^{\Phi/\hbar}u|^2\widehat a d\sigma d\tau \,.
\end{multline*}
We choose $R_{0}>0$ such that $\frac{M(R_{0})}{2}-\tilde C >0$. For all $R\geq R_{0}$, we have $\frac{M(R)}{2}-\tilde C\geq \frac{M(R_{0})}{2}-\tilde C>0$.

Thus, by the Cauchy-Schwarz inequality and the assumption on the function $\Phi$,
\begin{multline}\label{eq.estimate-weight3}
\int_{\widehat {\mathcal V}_\rig}\widehat a^{-2}\hbar^4|\partial_\sigma (e^{\Phi/\hbar}u)|^2\,\widehat{a}d\sigma d\tau+\frac{1}{2}\left(\frac{M(R_{0})}{2}-\tilde C\right)\hbar^3\|e^{\Phi/\hbar}u\|^2\\
\leq C\hbar^{-3}\|e^{\Phi/\hbar}(\widehat{\mathcal{L}}_{\rig}-z)u\|^2+C\hbar^3\|u\|_{L^2(\widehat{\mathcal  V}_{\rig}\cap\hat{\mathcal{B}}_{\rig}(R\hbar^{\frac{1}{2}}))}^2\,.
\end{multline}
 From \eqref{eq.estimate-weight3}, we get
\[\left(\frac{M(R_{0})}{2}-\tilde C\right)^{\frac{1}{2}}\hbar^{\frac{3}{2}}\|e^{\Phi/\hbar}u\|\leq C\hbar^{-\frac{3}{2}}\|e^{\Phi/\hbar}(\widehat{\mathcal{L}}_{\rig}-z)u\|+C\hbar^\frac{3}{2}\|u\|_{L^2(\widehat{\mathcal  V}_{\rig}\cap\hat{\mathcal{B}}_{\rig}(R\hbar^{\frac{1}{2}}))}\,,\]
and we deduce \eqref{eq.estimate-weight1}. The estimate \eqref{eq.estimate-weight2} directly comes from \eqref{eq.estimate-weight3}.
\end{proof}

\begin{remark}
If we apply Proposition \ref{prop.Agmon} to the eigenpair $(\phi_{\omega,\hbar}, \mu_{\rig}(\hbar))$, we  get
\[\|e^{\Phi/\hbar}\phi_{\omega, \hbar}\|_{L^2(\widehat{\mathcal  V}_{\rig})}\leq C\, \|\phi_{\omega, \hbar}\|_{L^2(\widehat{\mathcal  V}_{\rig})}\,,\]
(as soon as $M$ is large enough, to insure that $\mu_{\rig}(\hbar)$ belongs to the energy window). Note also that these estimates are weighted estimates in $H_{\sigma}^1(\omega, L_{\tau}^2(0,T  ))$.
\end{remark}

Let us gather some possible choices for $\Phi$ in the following proposition (see \cite[Chapter 6]{DS99} or \cite[Proposition 2.4 and Lemma 2.5]{BHR-c} for a detailed proof).

\begin{proposition}\label{prop.Agmon.choice}
Let $c_{0}>0$ such that
\begin{equation}\label{eq.c0}
\mathfrak v (\sigma)\geq c_{0}(\sigma-s_{\rig})^2\qquad \mbox{ and}\qquad \Phi_{\rig}(\sigma)\geq c_{0}(\sigma-s_{\rig})^2,\qquad\forall \sigma\in\omega\,.
\end{equation}
Possible choices of $\Phi$ satisfying the assumptions in Proposition~\ref{prop.Agmon}  are:
\begin{enumerate}[\rm (a)]
\item\label{poids1} for $\alpha \in(0,1)$, the rough weight $$ \Phi=\sqrt{1-\alpha }\, \Phi_{\rig}$$ with $R>0$ and $M=c_{0}\alpha R^2$\,;
\item\label{poids2} for $N\in\mathbb{N}^*$ and $\hbar\in(0,1)$, the accurate  weight $$\tilde{\Phi}_{\rig, N,\hbar} =\Phi_{\rig} - N \hbar \ln \left(\max \left( \frac{\Phi_{\rig}}{\hbar}, N\right)\right)\,,$$ with $R=\sqrt{\frac{N}{c_{0}}}$ and $M=N\inf_{\omega}\frac{\mathfrak v }{\Phi_{\rig}}$\,;
\item\label{poids3} for $\alpha \in(0,1)$, $\check \omega$ as above, $N\in\mathbb{N}^*$ and $\hbar\in(0,1)$, the intermediate weight
\begin{equation}\label{eq.poids3}
\hat{\Phi}_{\rig, \check \omega, N,\hbar} (\sigma) = \min \left\{\tilde{\Phi}_{\rig, N, \hbar} (\sigma), \sqrt{1-\alpha }\displaystyle{\inf_{t \in \supp\chi'_{{\check \omega}}} \left(\Phi_{\rig}(t) +\int_{[\sigma,t]}\sqrt{\mathfrak v (\tilde\sigma)}\, d\tilde\sigma\right)}\right\},
\end{equation}
with $R=\sqrt{\frac{N}{c_{0}}}$ and $M=N\min\left(\alpha ,\inf_{\omega}\frac{V}{\Phi_{\rig}}\right)$, where we recall that $\chi_{{\check \omega}}'$ is supported near $\partial\omega$ if $\check \omega$ is large enough.
\end{enumerate}

Moreover, the weight $\hat{\Phi}_{\rig, \check \omega, N,\hbar}$ satisfies the following. Let $K$ be a compact with $K\subset\{\chi_{\check \omega}=1\}$. For all $N\in\mathbb{N}^*$, there exists $\alpha_{0}$ such that for all $0<\alpha < \alpha_0$,  there exist $\hbar_{0}>0$ and $R>0$ such that, for all $\hbar\in(0,\hbar_{0})$, we have
\begin{enumerate}[\rm (i)]
\item\label{eq.lmhPhi1} $\hat{\Phi}_{\rig,\check \omega,  N,\hbar} \leq \Phi_{\rig}$ on $\omega$,
\item\label{eq.lmhPhi3} $\hat{\Phi}_{\rig, \check \omega, N,\hbar}=\tilde{\Phi}_{\rig, N,\hbar}$ on $K$,
\item\label{eq.lmhPhi2} $\hat{\Phi}_{\rig, \check \omega, N,\hbar} = \sqrt{1-\alpha }\, \Phi_{\rig}$ on $\supp \chi_{{\check \omega}}'$ (cf. Remark~\ref{cor.WKB}).
\end{enumerate}
\end{proposition}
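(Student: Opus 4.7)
The plan is to verify each of the three choices against the hypotheses of Proposition~\ref{prop.Agmon} by computing the eikonal defect $\mathfrak{v}-|\Phi'|^{2}$ directly, and then to derive the comparison properties (\ref{eq.lmhPhi1})--(\ref{eq.lmhPhi2}) for the intermediate weight from the triangle inequality satisfied by the Agmon distance associated with $\mathfrak{v}$.

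For the rough weight (\ref{poids1}), the identity $|\Phi_{\rig}'|^{2}=\mathfrak{v}$ on $\omega$ yields $\mathfrak{v}(\sigma)-|\Phi'(\sigma)|^{2}=\alpha\,\mathfrak{v}(\sigma)$, and \eqref{eq.c0} gives the lower bound $\alpha c_{0}R^{2}\hbar$ outside $\mathcal{B}_{\rig}(R\hbar^{1/2})$ with $M(R)=\alpha c_{0}R^{2}$, while inside the ball the quadratic control on $\Phi_{\rig}$ directly gives $|\Phi|=\mathcal{O}(\hbar)$. For the accurate weight (\ref{poids2}), differentiating on $\{\Phi_{\rig}>N\hbar\}$ yields
\[|\tilde{\Phi}_{\rig,N,\hbar}'|^{2}=\mathfrak{v}\Big(1-\frac{N\hbar}{\Phi_{\rig}}\Big)^{2}\,,\qquad\mbox{so}\qquad \mathfrak{v}-|\tilde{\Phi}_{\rig,N,\hbar}'|^{2}\geq N\hbar\,\frac{\mathfrak{v}}{\Phi_{\rig}}\,,\]
whereas on $\{\Phi_{\rig}\leq N\hbar\}$ the weight equals $\Phi_{\rig}$ up to an additive constant. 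By \eqref{eq.c0}, $\{\Phi_{\rig}\leq N\hbar\}\subset\mathcal{B}_{\rig}(\sqrt{N/c_{0}}\,\hbar^{1/2})$, so the choice $R=\sqrt{N/c_{0}}$ gives the eikonal bound outside the ball with $M(R)=N\inf_{\omega}(\mathfrak{v}/\Phi_{\rig})$, and inside the ball one uses $|\tilde{\Phi}_{\rig,N,\hbar}|\leq CN(1+|\ln N|)\hbar$.

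The intermediate weight (\ref{poids3}) is a pointwise minimum of two Lipschitz functions, so the verification must be run almost everywhere. At a point where the minimum is realised by $\tilde{\Phi}_{\rig,N,\hbar}$, case (\ref{poids2}) applies; at a point where it is realised by the second candidate, the envelope theorem shows that this candidate is Lipschitz with slope bounded by $\sqrt{(1-\alpha)\mathfrak{v}(\sigma)}$, so its eikonal defect is bounded below by $\alpha\,\mathfrak{v}(\sigma)$ exactly as in case (\ref{poids1}). Combining these two bounds with the choice $R=\sqrt{N/c_{0}}$ yields the prescribed $M(R)=N\min(\alpha,\inf_{\omega}(\mathfrak{v}/\Phi_{\rig}))$, and the control inside the ball follows since both candidate weights are $\mathcal{O}(\hbar)$ there.

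Finally, the comparison properties follow from the triangle inequality for the Agmon distance. Property (\ref{eq.lmhPhi1}) is immediate since $\ln\max(\Phi_{\rig}/\hbar,N)\geq\ln N\geq 0$ forces $\tilde{\Phi}_{\rig,N,\hbar}\leq\Phi_{\rig}$, hence $\hat{\Phi}\leq\Phi_{\rig}$. For property (\ref{eq.lmhPhi2}), setting $t=\sigma$ in the infimum produces the upper bound $\sqrt{1-\alpha}\,\Phi_{\rig}(\sigma)$, and the triangle inequality $\Phi_{\rig}(t)+\int_{[\sigma,t]}\sqrt{\mathfrak{v}}\geq\Phi_{\rig}(\sigma)$ shows that this is in fact the value of the infimum; since $\Phi_{\rig}$ is bounded below by a positive constant on $\supp\chi_{\check\omega}'$, one has $\sqrt{1-\alpha}\,\Phi_{\rig}\leq\tilde{\Phi}_{\rig,N,\hbar}$ there for $\hbar$ small enough, so the minimum is realised by the second candidate. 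The hardest step is property (\ref{eq.lmhPhi3}): for $\sigma\in K\subset\{\chi_{\check\omega}=1\}$ one needs the second candidate to strictly exceed $\tilde{\Phi}_{\rig,N,\hbar}(\sigma)$, which requires exploiting the positive gap between $\inf_{t\in\supp\chi_{\check\omega}'}\Phi_{\rig}(t)$ and $\sup_{\sigma\in K}\Phi_{\rig}(\sigma)$ to produce an Agmon-distance buffer that dominates the logarithmic correction $N\hbar\ln\max(\Phi_{\rig}/\hbar,N)$; this can be arranged by choosing $\alpha_{0}$ and $\hbar_{0}$ small enough, and then $\hat{\Phi}_{\rig,\check\omega,N,\hbar}=\tilde{\Phi}_{\rig,N,\hbar}$ on $K$.
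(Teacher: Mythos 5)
Your verification is correct and follows essentially the route that the paper delegates to \cite{DS99} and \cite[Prop.~2.4, Lemma~2.5]{BHR-c}: direct computation of the eikonal defect $\mathfrak v-|\Phi'|^2$ for each weight (a.e., since the weights are only Lipschitz), the inclusion $\{\Phi_{\rig}\leq N\hbar\}\subset\mathcal B_{\rig}(\sqrt{N/c_0}\,\hbar^{1/2})$ coming from \eqref{eq.c0}, the $1$-Lipschitz property of $\sigma\mapsto\inf_t(\Phi_{\rig}(t)+d(\sigma,t))$ with respect to the Agmon metric, and the triangle inequality for the comparison properties.

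Two small points deserve care. First, the bound $|\Phi|\leq M(R)\hbar$ on $\mathcal B_{\rig}(R\hbar^{1/2})$ holds in cases (a)--(c) only up to a constant (e.g.\ $N(1+\ln N)\hbar$ versus $N\inf_\omega(\mathfrak v/\Phi_{\rig})\,\hbar$ in case (b)); this is harmless because the only use of that hypothesis in Proposition~\ref{prop.Agmon} is to guarantee $e^{\Phi/\hbar}=\mathcal O(1)$ on the ball, but you should state the enlargement of the constant explicitly. Second, in your argument for \eqref{eq.lmhPhi3} the quantity that the Agmon buffer $2\eta_0=2\min\bigl(\inf_{\sigma\in K,\,t\in\supp\chi'_{\check\omega}}d(\sigma,t),\ \inf_{\supp\chi'_{\check\omega}}\Phi_{\rig}\bigr)>0$ must dominate is the deficit $(1-\sqrt{1-\alpha}\,)\sup_K\Phi_{\rig}$ created by the prefactor $\sqrt{1-\alpha}$ --- this is precisely what forces $\alpha<\alpha_0$ --- whereas the logarithmic correction $-N\hbar\ln\max(\Phi_{\rig}/\hbar,N)$ lowers $\tilde\Phi_{\rig,N,\hbar}$ and therefore works in your favour; your phrasing inverts the roles of these two terms, although the conclusion (choose $\alpha_0$, then $\hbar_0$, small) is the right one.
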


\begin{remark}
We now assume  $\rho=\frac{1}{4}$ and  $D>\mathsf{S}$.
\end{remark}

\section{WKB approximation in the simple mini-well case}\label{sec.WKBapprox}
The aim of this section is to approximate the true eigenfunction $\phi_{\hbar,\rig}$ by the WKB function $\Psi_{\hbar,\rig}$ defined in \eqref{eq.WKB-quasimode}, or equivalently by $\psi_{\hbar, \check \omega}$ defined in \eqref{eq.WKB-quasimode'}.
\subsection{Main result} Let us introduce the orthogonal  projection on the space spanned by  $\phi_{\hbar,\rig}$:
\[\Pi_{\rig}\psi=\langle\psi,\phi_{\hbar,\rig}\rangle\phi_{\hbar,\rig}\,.\]

Recall that $\chi\in C_c^\infty([0,\infty))$ denotes a cutt-off function which is equal to $1$ near $0$.

\begin{proposition}\label{prop:WKB=gs}
Let $K$ be a compact set in $\omega$. There holds
\begin{align}
e^{\Phi_{\rig}/\hbar}(\Psi^\natural_{\hbar,\rig}-\Pi_{\rig}\Psi^\natural_{\hbar,\rig})=\,&\mathcal{O}(\hbar^\infty)\,,\label{eq:app.WKB1'}\\
e^{\Phi_{\rig}/\hbar}\partial_{\sigma}(\Psi^\natural_{\hbar,\rig}-\Pi_{\rig}\Psi^\natural_{\hbar,\rig})=\,& \mathcal{O}(\hbar^\infty)\,,\label{eq:app.WKB2'}
\end{align}
in $\mathcal C(K;L^2(0,T  ))$ and where we have let $\Psi^\natural_{\hbar, \omega}=\chi(T^{-1}\tau)\Psi_{\hbar, \omega}$.
\end{proposition}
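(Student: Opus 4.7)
The plan is to combine three ingredients: the WKB residual estimate of Remark~\ref{cor.WKB}, the spectral gap \eqref{eq:sp-gap.1well} isolating $\phi_{\hbar,\rig}$, and the weighted tangential Agmon estimate of Proposition~\ref{prop.Agmon} applied with the accurate weight $\hat\Phi_{\rig,\check\omega,N,\hbar}$ of Proposition~\ref{prop.Agmon.choice}~(c).

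Given $K$, I would first choose $\check\omega$ so that $K\subset\{\chi_{\check\omega}=1\}\subset\overline{\check\omega}\subset\omega$, and work with the fully truncated quasimode $\psi:=\psi_{\hbar,\omega,\check\omega}$. Since the formal series $\mu$ in \eqref{eq:Tay-mu} coincides with the Taylor expansion of $\mu_{\rig}(\hbar)$, we have $|\mu-\mu_{\rig}(\hbar)|=\mathcal{O}(\hbar^\infty)$, and Remark~\ref{cor.WKB} gives
\[
\|e^{\Phi_{\rig}/\hbar}(\widehat{\mathcal L}_{\rig}-\mu_{\rig}(\hbar))\psi\|_{L^2(\widehat{\mathcal V}_{\rig})}=\mathcal{O}(\hbar^\infty).
\]
Setting $r:=(I-\Pi_{\rig})\psi$, the identity $(\widehat{\mathcal L}_{\rig}-\mu_{\rig}(\hbar))\Pi_{\rig}\psi=0$ gives $(\widehat{\mathcal L}_{\rig}-\mu_{\rig}(\hbar))r=(\widehat{\mathcal L}_{\rig}-\mu_{\rig}(\hbar))\psi$; combined with $r\perp\phi_{\hbar,\rig}$ and the gap \eqref{eq:sp-gap.1well}, one obtains the crude bound $\|r\|_{L^2}\leq C\hbar^{-7/4}\|(\widehat{\mathcal L}_{\rig}-\mu_{\rig}(\hbar))\psi\|_{L^2}=\mathcal{O}(\hbar^\infty)$.

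Next, for each $N\in\mathbb N^*$, I would apply Proposition~\ref{prop.Agmon} to $r$ with $\Phi=\hat\Phi_{\rig,\check\omega,N,\hbar}$. Since $\hat\Phi\leq\Phi_{\rig}$ by Proposition~\ref{prop.Agmon.choice}~(i), the right-hand side in \eqref{eq.estimate-weight1}--\eqref{eq.estimate-weight2} is controlled by $\|e^{\Phi_{\rig}/\hbar}(\widehat{\mathcal L}_{\rig}-\mu_{\rig}(\hbar))\psi\|_{L^2}+\hbar^{3}\|r\|_{L^2}=\mathcal{O}(\hbar^\infty)$, yielding
\[
\|e^{\hat\Phi/\hbar}r\|_{L^2(\widehat{\mathcal V}_{\rig})}+\|e^{\hat\Phi/\hbar}\partial_{\sigma}r\|_{L^2(\widehat{\mathcal V}_{\rig})}=\mathcal{O}(\hbar^\infty).
\]
On $K$, Proposition~\ref{prop.Agmon.choice}~(ii) gives $\hat\Phi=\Phi_{\rig}-N\hbar\ln\max(\Phi_{\rig}/\hbar,N)$, and since $\Phi_{\rig}$ is bounded on $K$ one has $e^{\hat\Phi/\hbar}\geq c_N\hbar^N e^{\Phi_{\rig}/\hbar}$. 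Since $N$ is arbitrary, this transfers both estimates to the pure weight $e^{\Phi_{\rig}/\hbar}$ in $L^2(K\times(0,T))$. A Sobolev embedding $H^1_{\sigma}(K';L^2_{\tau}(0,T))\hookrightarrow\mathcal C_{\sigma}(\overline K;L^2_{\tau}(0,T))$ on a slightly larger $K\subset K'\subset\{\chi_{\check\omega}=1\}$ then produces the $\mathcal C(K;L^2(0,T))$ bound for $r$; the corresponding bound for $\partial_{\sigma}r$ is obtained similarly after using the equation $(\widehat{\mathcal L}_{\rig}-\mu_{\rig}(\hbar))r=f$ to control $\partial_{\sigma}^{2}r$ in terms of lower-order derivatives.

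Finally, on $K$ we have $\psi=\Psi^\natural_{\hbar,\rig}$ pointwise, so that on $K$
\[
\Psi^\natural_{\hbar,\rig}-\Pi_{\rig}\Psi^\natural_{\hbar,\rig}=r-\Pi_{\rig}\bigl((1-\chi_{\check\omega})\Psi^\natural_{\hbar,\rig}\bigr).
\]
The scalar $\langle\phi_{\hbar,\rig},(1-\chi_{\check\omega})\Psi^\natural_{\hbar,\rig}\rangle$ is of order $e^{-\Phi_{\rig}^{*}/\hbar}$, with $\Phi_{\rig}^{*}:=\min\{\Phi_{\rig}(\sigma):\chi_{\check\omega}(\sigma)\neq 1\}$, by Cauchy--Schwarz combined with the weighted Agmon bound $\|e^{\Phi_{\rig}/\hbar}\phi_{\hbar,\rig}\|_{L^2}\leq C$ mentioned after Proposition~\ref{prop.Agmon} and the explicit exponential decay of $\Psi^\natural_{\hbar,\rig}$. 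Choosing $\check\omega$ so that $\Phi_{\rig}^{*}>\sup_{K}\Phi_{\rig}$ (allowed by Remark~\ref{cor.WKB}) absorbs the remaining weight $e^{\Phi_{\rig}/\hbar}$ on $K$. The main technical point is Step~3: letting the arbitrary $N$ in $\hat\Phi_{\rig,\check\omega,N,\hbar}$ translate into an arbitrary polynomial order of $\hbar$ in the final statement, via the logarithmic correction defining $\tilde\Phi_{\rig,N,\hbar}$.
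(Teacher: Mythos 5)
Your overall architecture matches the paper's (spectral gap for the $L^2$ bound, tangential Agmon estimate with the intermediate weight, conversion to the pure weight on $K$ via the logarithmic correction, Sobolev embedding), but there is a genuine gap at the crucial step where you feed the residual into Proposition~\ref{prop.Agmon}. You claim that Remark~\ref{cor.WKB} gives $\|e^{\Phi_{\rig}/\hbar}(\widehat{\mathcal L}_{\rig}-\mu_{\rig}(\hbar))\psi\|_{L^2(\widehat{\mathcal V}_{\rig})}=\mathcal O(\hbar^\infty)$ and then bound $\|e^{\hat\Phi/\hbar}(\widehat{\mathcal L}_{\rig}-\mu_{\rig}(\hbar))r\|$ by this quantity using only $\hat\Phi\le\Phi_{\rig}$. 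But Remark~\ref{cor.WKB} only asserts the $\mathcal O(\hbar^\infty)$ estimate in $L^2(\widehat{\mathcal V}_{\check\omega})$, i.e.\ on the region where $\chi_{\check\omega}\equiv 1$. On $\supp\chi'_{\check\omega}$ the residual contains the commutator term $[\widehat{\mathcal L}_{\rig},\chi_{\check\omega}]\Psi_{\hbar,\rig}$, which is of size $\hbar^{4}\cdot\hbar^{-1}\cdot\hbar^{-1/4}e^{-\Phi_{\rig}/\hbar}$ there; multiplied by $e^{\Phi_{\rig}/\hbar}$ this is only $\mathcal O(\hbar^{11/4})$, not $\mathcal O(\hbar^\infty)$. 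So the quantity you use to control the right-hand sides of \eqref{eq.estimate-weight1}--\eqref{eq.estimate-weight2} is not $\mathcal O(\hbar^\infty)$, and the inequality $\hat\Phi\le\Phi_{\rig}$ points in exactly the wrong direction: the whole purpose of the intermediate weight is property \eqref{eq.lmhPhi2} of Proposition~\ref{prop.Agmon.choice}, namely $\hat\Phi_{\rig,\check\omega,N,\hbar}=\sqrt{1-\alpha}\,\Phi_{\rig}$ on $\supp\chi'_{\check\omega}$, so that the commutator contribution carries the factor $e^{(\hat\Phi-\Phi_{\rig})/\hbar}\le e^{-(1-\sqrt{1-\alpha})\Phi_{\rig}/\hbar}=\mathcal O(e^{-c/\hbar})$, since $\Phi_{\rig}$ is bounded below by a positive constant on $\supp\chi'_{\check\omega}$. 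This is precisely the content of the paper's Step~2 (equations \eqref{eq:est.WKB'}--\eqref{eq:est.WKB''-bis}) and it cannot be bypassed as you propose.

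Two further points. First, your reduction from $\Psi^\natural_{\hbar,\rig}$ to $\psi_{\hbar,\check\omega}$ requires choosing $\check\omega$ with $\Phi^{*}_{\rig}>\sup_K\Phi_{\rig}$; this cannot always be arranged when $K$ nearly exhausts $\omega$ and $\Phi_{\rig}$ is asymmetric about $s_{\rig}$, because the infimum of $\Phi_{\rig}$ over $\{\chi_{\check\omega}\neq 1\}$ is taken on the side where $\Phi_{\rig}$ is smallest and may lie below $\sup_K\Phi_{\rig}$. You should instead retain the product of the two exponential decays (that of $\phi_{\hbar,\rig}$ and that of $\Psi^\natural_{\hbar,\rig}$) in the Cauchy--Schwarz estimate, or treat the two components of $\supp(1-\chi_{\check\omega})$ separately. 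Second, for \eqref{eq:app.WKB2'} the phrase \emph{control $\partial_\sigma^2 r$ in terms of lower-order derivatives} hides the real work: inverting the equation for $\partial_\sigma^{2}r$ requires a weighted estimate on $\partial_\tau^{2}r$, which is not of lower order and is exactly why the paper's Step~4 introduces $v=\partial_\tau u$ and $w=\partial_\tau^{2}u$ and runs the Agmon machinery on them as well.
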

We split the proof into four steps inspired by the presentation of \cite{BHR-c}. We choose $\check \omega$ so that $K\subset \{\chi_{\check \omega}=1\}$. Then, it is sufficient (see Remark \ref{cor.WKB}) to prove that
\begin{align}
e^{\Phi_{\rig}/\hbar}(\psi_{\hbar,{\check\omega}}-\Pi_{\rig}\psi_{\hbar,{\check \omega}})=\,&\mathcal{O}(\hbar^\infty)\,,\label{eq:app.WKB1}\\
e^{\Phi_{\rig}/\hbar}\partial_{\sigma}(\psi_{\hbar,{\check \omega}}-\Pi_{\rig}\psi_{\hbar,{\check \omega}})=\,& \mathcal{O}(\hbar^\infty)\,,\label{eq:app.WKB2}
\end{align}
in $\mathcal C(K;L^2(0,T  ))$.

\subsection{Estimating the $L^2$-norm}
Recall the definition of the domain $\widehat{\mathcal  V}_{\rig}$ in \eqref{eq:dom-right}. Let
\begin{equation}\label{eq:choice.u}
u=\psi_{\hbar,{\check\omega}}-\Pi_\rig\psi_{\hbar,{\check\omega}}\,.
\end{equation}
Since $u$ is orthogonal to the eigenfunction $\phi_{\hbar,\rig}$, then by the min-max principle,
\[\big(\mu_{2,\rig}(\hbar)-\mu_{\rig}(\hbar)\big)\|u\|^2_{L^2(\widehat{\mathcal  V}_{\rig})}\leq \left\|\big(\widehat{\mathcal L}_\rig-\mu_{\rig}(\hbar)\big)u\right\|^2_{L^2(\widehat{\mathcal  V}_{\rig})}=\left\|\big(\widehat{\mathcal L}_\rig-\mu_{\rig}(\hbar)\big)\psi_{\hbar,{\check\omega}}\right\|^2_{L^2(\widehat{\mathcal  V}_{\rig})}\,.\]
Using the estimate of the $\mu_{2,\rig}(\hbar)-\mu_\rig(\hbar)$ in \eqref{eq:sp-gap.1well}, the expansion of $\mu_\rig(\hbar)$ in \eqref{eq:Tay-mu} and the result in Proposition~\ref{prop.WKB}, we get
\begin{equation}\label{eq:est.WKB}
\|u\|_{L^2(\widehat{\mathcal  V}_{\rig})}=\mathcal O(\hbar^\infty)\,.
\end{equation}

\subsection{Estimating $\big(\widehat{\mathcal L}_\rig-\mu_{\rig}(\hbar)\big)\psi_{\hbar,{\check\omega}}$}
Here we will prove that,
\begin{equation}\label{eq:est.WKB'}
\Big\|e^{\hat\Phi_{\rig,\check\omega, N,\hbar}/\hbar}\big(\widehat{\mathcal L}_\rig-\mu_{\rig}(\hbar)\big)\psi_{\hbar,{\check \omega}}\Big\|_{L^2(\widehat{\mathcal  V}_{\rig})}=\mathcal O(\hbar^\infty)\,.
\end{equation}
In view of the definition of the function $\psi_{\hbar,{\check\omega}}$ in Proposition~\ref{prop.WKB}, we write,
\begin{equation}\label{eq:est.WKB''}
\begin{aligned}
&e^{\hat\Phi_{\rig,\check\omega, N,\hbar}/\hbar}\big(\widehat{\mathcal L}_\rig-\mu_{\rig}(\hbar)\big)\psi_{\hbar,{\check\omega}}
\\
&\qquad=e^{\hat\Phi_{\rig,\check \omega, N,\hbar}/\hbar}\chi_{{\check \omega}}(\sigma)\chi(T^{-1}\tau)\big(\widehat{\mathcal L}_\rig-\mu_{\rig}(\hbar)\big)\Psi_{\hbar,\rig}\\
&\qquad+e^{\hat\Phi_{\rig,\check\omega, N,\hbar}}\Big(\chi_{{\check \omega}}(\sigma)[\widehat{\mathcal L}_\rig,\chi(T^{-1}\tau)]+\chi(T^{-1}\tau)[\widehat{\mathcal L}_\rig,\chi_{\check\omega}(\sigma)]\Big)\Psi_{\hbar,\rig}\,,
\end{aligned}
\end{equation}
where $[\cdot,\cdot]$ denotes the commutator.

Then we have
\begin{multline}\label{eq:est.WKB''-bis}
e^{\hat\Phi_{\rig,\check \omega, N,\hbar}/\hbar}\big(\widehat{\mathcal L}_\rig-\mu_{\rig}(\hbar)\big)\psi_{\hbar,{\check \omega}}=e^{(\hat\Phi_{\rig,\check \omega,N,\hbar}-\Phi_{\omega})/\hbar}\,\mathcal{O}(\hbar^\infty)\\
+e^{(\hat\Phi_{\rig,\check\omega,N,\hbar}-\Phi_{\omega})/\hbar}\Big(\chi_{{\check\omega}}(\sigma)[\widehat{\mathcal L}_\rig,\chi(T^{-1}\tau)]\Big)e^{\Phi_{\omega}/\hbar}\Psi_{\hbar,\rig}+e^{(\hat\Phi_{\rig,\check \omega,N,\hbar}-\Phi_{\omega})/\hbar}{\mathcal{O}_{L^\infty(\supp\chi'_{{\check \omega}})}(1)}\,.
\end{multline}
Here the notation $\mathcal{O}_{L^\infty(\supp\chi'_{{\check \omega}})}(1)$ means that the function is supported on $\supp\chi'_{\check\omega}$ and that it is uniformly bounded when $\hbar$ goes to $0$.
By Proposition~\ref{prop.Agmon.choice}, $0<e^{(\hat\Phi_{\rig,\check\omega,N,\hbar}-\Phi_{\rig})/\hbar}\leq 1$ in $\omega$ and
for $\alpha\in(0,1)$,
\[\hat\Phi_{\rig,\check\omega,N,\hbar}-\Phi_{\rig}\leq -(1-\sqrt{1-\alpha})\Phi_\rig\]
in ${\rm supp}\,\chi'_{{\check \omega}}$. Now, \eqref{eq:est.WKB'} becomes  a consequence of \eqref{eq:est.WKB''-bis} and Proposition~\ref{prop.WKB} thanks to support considerations.

\subsection{Proof of \eqref{eq:app.WKB1}}
Let us apply Proposition~\ref{prop.Agmon} with the following choices:
$u$ as in \eqref{eq:choice.u}, $z=\mu_\rig(\hbar)$ and $\Phi=\hat\Phi_{\rig,\check\omega,N,\hbar}$. We have
\begin{multline*}
\|e^{\hat\Phi_{\rig,\check\omega, N,\hbar}/\hbar}u\|_{L^2(\widehat{\mathcal V}_{\rig})}+
\|\partial_\sigma e^{\hat\Phi_{\rig,\check\omega, N,\hbar}/\hbar}u\|_{L^2(\widehat{\mathcal V}_{\rig})}
\\
\leq C\hbar^{-7}\| e^{\hat\Phi_{\rig,\check \omega, N,\hbar}/\hbar}(\widehat{\mathcal L}_\rig -\mu_\rig(\hbar)) u\|_{L^2(\widehat{\mathcal V}_{\rig})}+C\hbar^{-1}\|u\|_{L^2(\widehat{\mathcal V}_{\rig})}\,.
\end{multline*}
In light of \eqref{eq:est.WKB} and \eqref{eq:est.WKB'}, we deduce that,
\begin{equation}\label{eq:est.WKB3}
\|e^{\hat\Phi_{\rig,\check \omega, N,\hbar}/\hbar}u\|_{L^2(\widehat{\mathcal V}_{\rig})}+
\|\partial_\sigma e^{\hat\Phi_{\rig,\check \omega, N,\hbar}/\hbar}u\|_{L^2(\widehat{\mathcal V}_{\rig})}
=\mathcal O(\hbar^\infty)\,.
\end{equation}
By Proposition~\ref{prop.Agmon.choice}, we have
$$\hat\Phi_{\rig,\check \omega,N,\hbar}=\tilde\Phi_{\rig,N,\hbar}{~\rm in~}K\,,\quad{\rm and}\quad e^{(\Phi_{\rig}-\tilde\Phi_{\rig,N,\hbar})/\hbar}=\mathcal O(\hbar^{-N}){\rm ~in~}L^\infty(K)\,.$$
In that way, we get the following estimate,
\begin{equation}\label{eq:est.WKB3'}
\|e^{\Phi_{\rig}/\hbar}u\|_{L^2(\hat K)}+
\|\partial_\sigma e^{\Phi_{\rig}/\hbar}u\|_{L^2(\hat K)}
=\mathcal O(\hbar^\infty)\,,\end{equation}
where
$$\hat K=K\times (0,T  )\,.$$
We may rewrite this estimate in the form,
\begin{equation}\label{eq. exp-u-approx}
\|e^{\Phi_{\rig}/\hbar}u\|_{L^2(K;L^2(0,T  ))}+
\|\partial_\sigma e^{\Phi_{\rig}/\hbar}u\|_{L^2(K;L^2(0,T  ))}
=\mathcal O(\hbar^\infty)\,,
\end{equation}
which in turn yields \eqref{eq:app.WKB1} in $\mathcal{C}(K; L^2(0,T  ))$ (cf. \cite[Thm.~2; p.~302]{Ev}).

\subsection{Proof of \eqref{eq:app.WKB2}}~\\
Let (cf. \eqref{eq:choice.u})
) $$v:= \partial_\tau u= \partial_\tau(\psi_{\hbar,{\check\omega}}-\Pi_{\rig}\psi_{\hbar,{\check \omega}})$$ and
$$w:=\partial_\tau v=\partial_\tau^2 u\,.$$

We apply Proposition~\ref{prop.Agmon} to obtain,
\begin{equation}\label{eq:est.WKB4}
\|e^{\hat\Phi_{\rig,\check \omega, N,\hbar}/\hbar}v\|\leq C\hbar^{-3}\|e^{\hat\Phi_{\rig,\check \omega, N,\hbar}/\hbar}\big(\mathcal L_{\rig}-\mu_\rig(\hbar)\big)v\|+C\|v\|\end{equation}
 and
 \begin{equation}
\|e^{\hat\Phi_{\rig,\check \omega, N,\hbar}/\hbar}w\|\leq C\hbar^{-3}\|e^{\hat\Phi_{\rig,\check \omega, N,\hbar}/\hbar}\big(\mathcal L_{\rig}-\mu_\rig(\hbar)\big)w\|+C\|w\|\,.
\end{equation}
In light of the two identities
\begin{equation*}
\big(\mathcal L_{\rig}-\mu_\rig(\hbar)\big)v=\partial_\tau\big(\mathcal L_{\rig}-\mu_\rig(\hbar)\big)u=\partial_\tau\big(\mathcal L_{\rig}-\mu_\rig(\hbar)\big)\psi_{\hbar,{\check \omega}}
\end{equation*}
and
\begin{equation*}
\big(\mathcal L_{\rig}-\mu_\rig(\hbar)\big)v=\partial_\tau^2\big(\mathcal L_{\rig}-\mu_\rig(\hbar)\big)\psi_{\hbar,{\check \omega}}\,,
\end{equation*}
we get by Proposition~\ref{prop.WKB},
\begin{equation}\label{eq:est**}
e^{\Phi_\rig/\hbar}\big(\mathcal L_{\rig}-\mu_\rig(\hbar)\big)v=\mathcal O(\hbar^\infty)\quad {\rm and}
\quad e^{\Phi_\rig/\hbar}\big(\mathcal L_{\rig}-\mu_\rig(\hbar)\big)w=\mathcal O(\hbar^\infty)\,,\end{equation}
in $L^2(\widehat{\mathcal V}_{\rig})$.\\
By Step~1, Proposition~\ref{prop.WKB} and Remark \ref{cor.WKB},
$$\big(\widehat{\mathcal L}_\rig -\mu_\rig(\hbar)\big)u=\mathcal O(\hbar^\infty)\quad{\rm in}~L^2(\widehat{\mathcal V}_{\rig}) \,.$$
By \eqref{eq:est.WKB}, we get further
$$\widehat{\mathcal L}_\rig u=\mathcal O(\hbar^\infty)\quad{\rm in}~L^2(\widehat{\mathcal V}_{\rig}) \,.$$
Multiplying by $u$ and integrating by parts yields,
\[\int_{\widehat{\mathcal V}_{\rig}} \big(|\partial_\tau u|^2+\hbar^4\widehat a^{-2}|\partial_\sigma u|^2\big)\widehat a\,d\tau d\sigma-\int_{\omega} |u(\sigma,0)|^2\,d\sigma=\mathcal O(\hbar^\infty)\,.\]
Note that the lowest eigenvalue of the operator $-\partial_\tau^2$ in $L^2(\R_+)$ with boundary condition \break  $w' (0)=-2w(0)$ is equal to $-4$. Then we get
$$\widehat a\geq \frac12\quad{\rm and}\quad \int_{\widehat{\mathcal V}_{\rig}} |\partial_\tau u|^2d\tau d\sigma-2\int_{\omega} |u(\sigma,0)|^2\,d\sigma\geq -4\int_{\widehat{\mathcal V}_{\rig}} |u|^2d\tau d\sigma\,,$$
and we deduce
$$
\frac12\int_{\widehat{\mathcal V}_{\rig}} |\partial_\tau u|^2 \widehat{a}\,d\tau d\sigma\leq \mathcal O(\hbar^\infty)+4\|u\|^2_{L^2(\widehat{\mathcal V}_{\rig})}=\mathcal O(\hbar^\infty)\,.
$$
In a similar way, using $\big(\mathcal L_{\rig}-\mu_\rig(\hbar)\big)v=\mathcal O(h^\infty)$ (cf. \eqref{eq:est**}), we get,
$$
\frac12\int_{\widehat{\mathcal V}_{\rig}} |\partial_\tau v|^2\,  \widehat{a}\,d\tau d\sigma\leq \mathcal O(\hbar^\infty)+4\|v\|_{L^2(\widehat{\mathcal V}_{\rig})}^2\,.
$$
Thus, we have the following two important estimates:
$$\|v\|=\mathcal O(\hbar^\infty)\quad{\rm and}\quad \|w\|=\mathcal O(\hbar^\infty)\,.$$
This and the estimates in \eqref{eq:est**} allow us to repeat the argument in Step~2 to obtain
$$\|e^{\hat\Phi_{\rig,\check \omega, N,\hbar}/\hbar}\big(\mathcal L_{\rig}-\mu_\rig(\hbar)\big)v\|=\mathcal O(\hbar^\infty)\quad{\rm and}\quad \|e^{\hat\Phi_{\rig,\check \omega, N,\hbar}/\hbar}\big(\mathcal L_{\rig}-\mu_\rig(\hbar)\big)w\|=\mathcal O(\hbar^\infty)\,.$$
Now,  \eqref{eq:est.WKB4} yields
$$
\|e^{\hat\Phi_{\rig,\check \omega, N,\hbar}/\hbar}\partial_\tau u\| =\mathcal O(\hbar^\infty) \quad{\rm and}\quad \|e^{\hat\Phi_{\rig,\check \omega, N,\hbar}/\hbar}\partial_\tau^2 u\| =\mathcal O(\hbar^\infty)\,.
$$
As done in Step~3, the properties of the function $\hat\Phi_{\rig,\hat \omega,N,\hbar}$ yield
\begin{equation}\label{eq:est.WKB5}
\|e^{\Phi_{\rig}/\hbar}\partial_\tau u\| =\mathcal O(\hbar^\infty) \quad{\rm and}\quad \|e^{\Phi_{\rig}/\hbar}\partial_\tau^2 u\| =\mathcal O(\hbar^\infty)\quad{\rm in ~}L^2(\widehat K)\,.
\end{equation}
Since $|\Phi'_{ \omega}|$ is bounded independently of $\hbar$, then we infer from \eqref{eq:est.WKB3'}
\begin{equation}\label{eq:est.WKB3''}
e^{\Phi_{\rig}/\hbar}\partial_\sigma u=\mathcal O(\hbar^\infty)\quad{\rm in~}L^2(\widehat K)\,.
\end{equation}
We recall that we have
\[e^{\Phi_{\rig}/\hbar}\big(\mathcal L_{\rig}-\mu_\rig(\hbar)\big)u=\mathcal O(\hbar^\infty)\,,\quad{\rm in~}L^2(\widehat K)\,.\]
Then, we use the estimates \eqref{eq:est.WKB5}, \eqref{eq:est.WKB3''} and \ref{eq. exp-u-approx} to deduce
\[e^{\Phi_{\rig}/\hbar}\partial_\sigma^2u=\mathcal O(\hbar^\infty)\quad{\rm in~}L^2(\widehat K)\,.\]
Since $|\Phi'_{\omega}|=\mathcal O(1)$, we get further
$$\partial_\sigma\Big(e^{\Phi_{\rig}/\hbar}\partial_\sigma u\Big)=\mathcal O(\hbar^\infty)\quad{\rm in~}L^2(\widehat K)\,.$$
This estimate and \eqref{eq:est.WKB3'} yield the estimate in \eqref{eq:app.WKB2}.

\section{Double mini-wells and interaction matrix}\label{sec.doublewell}
In this section, we come back to the study of the double mini-wells operator $\widehat{\mathcal{L}}_{\hbar}$.

\subsection{Right and left operators}
We introduce the operators corresponding to the left and right wells. Recall that we identify the boundary $\Gamma=\Gamma$ with the interval $(-L,L]$ and the orientation is chosen counter-clockwise. By Assumption~\ref{hyp.main3}, we know that
\[\{\sigma\in(-L,L)~:~\kappa(\sigma)=\kappa_{\max}\}=\{-s_0,s_0\}\]
where $s_0\in(0,L)$. We introduce the two (mini-)wells
\begin{equation}\label{eq:s-lr}
s_{\mathsf{\ell}}=s_0\quad{\rm and}\quad s_{\mathsf{r}}=-s_0\,,
\end{equation}
and this is consistent with the counter-clockwise orientation of the boundary.

Let us consider $ \eta$ such that
\[0<\eta<\frac12\min\Big(s_{\mathsf{\ell}},L-s_{\mathsf{\ell}}\Big)=\frac12\min\Big(-s_{\mathsf{r}},-L+s_{\mathsf{r}}\Big)\,.\]
We introduce the two intervals (in $\Gamma$)
\begin{equation}\label{eq:lr-intervals}
\omega_\mathsf{\ell}=\{\sigma\in(-L,L]~:~|\sigma-s_{\mathsf{r}}|>\eta\}\quad{\rm and}\quad
\omega_\mathsf{r}=\{\sigma\in(-L,L]~:~|\sigma-s_{\mathsf{\ell}}|>\eta\}\,.
\end{equation}
We will apply the results in Section \ref{sec.miniwell} with the interval $\omega$  being  $\omega_\mathsf{\ell}$ or $\omega_\mathsf{r}$. The assumption on $\eta$ ensures that the  left  and right intervals $\omega_{\mathsf{\ell}}$ and $\omega_{\mathsf{r}}$  have the same length $2L-2\eta$ and are related by the simple transformation $\sigma\mapsto-\sigma$.

Let us introduce the two sets
\[\hat {\omega}_\mathsf{\ell}=\omega_\mathsf{\ell}\times(0,T)\,,\quad \hat{\omega}_\mathsf{r}=\omega_\mathsf{r}\times(0,T)\,,\]
and the unitary transform $U$ defined by
\[U f (\sigma,\tau)=f(-\sigma,\tau)\,.\]
This transform goes from $L^2(\hat {\omega}_\mathsf{\ell})$ to $L^2(\hat{\omega}_\mathsf{r})$. Due to the symmetry assumption (cf. Assumption~\ref{hyp.main3}), we notice that
\[\widehat{\mathcal{L}}_{\lef}=U\widehat{\mathcal{L}}_{\righ} U^{-1}\,.\]
Thus these operators have the same spectrum and we may denote by $\mu(\hbar)$ their common lowest eigenvalue, i.e.
\begin{equation}\label{eq:ev.lef=rig}
\mu(\hbar)=\mu_\lef(\hbar)=\mu_\righ(\hbar)\,.
\end{equation}
The eigenfunctions of $\widehat{\mathcal{L}}_{\lef}$ may  be deduced from the ones of $\widehat{\mathcal{L}}_{\righ}$. In particular,
\begin{equation}\label{eq:gs.lef=rig}
\phi_{\hbar,\lef}=U\phi_{\hbar,\righ}\,.
\end{equation}
In Remark~\ref{cor.WKB}, we choose $\check \omega$ as
$$\check \omega_\mathsf{\ell}=\{\sigma~:~|\sigma-s_{\righ}|>2\eta\}\quad{\rm or}\quad \check \omega_\mathsf{r}=\{\sigma~:~|\sigma-s_\lef|>2\eta\}\,.$$
As a consequence, we get the two cut-off functions
$$\chi_{\lef}=\chi_{{\check \omega}_\lef}\quad{\rm and}\quad \chi_{\righ}=\chi_{{\check \omega}_\righ}$$
that are equal to $1$ in $\check \omega_\mathsf{\ell}$ and $\check\omega_\mathsf{\righ}$ respectively.

Proposition~\ref{prop:WKB=gs} yields, for every compact set $K\subset \check \omega_{\lef}$,
\begin{align}
e^{\Phi_{\lef}/\hbar}(\Psi^\natural_{\hbar,\lef}-\Pi_{\lef}\Psi^\natural_{\hbar,\lef})=\,&\mathcal{O}(\hbar^\infty)\,,\label{eq:app.WKB1-lef}\\
e^{\Phi_{\lef}/\hbar}\partial_{\sigma}(\Psi^\natural_{\hbar,\lef}-\Pi_{\lef}\Psi^\natural_{\hbar,\lef})=\,& \mathcal{O}(\hbar^\infty)\,,\label{eq:app.WKB2-lef}
\end{align}
in $\mathcal{C}(K;L^2(0,T  ))$. Here
\[
\Pi_\lef\psi=\langle\psi,\phi_{\hbar,\lef}\rangle\phi_{\hbar,\lef}\quad{\rm and}\quad \Phi_\lef(\sigma)=\Phi_\righ(-\sigma)=\int_{[s_{\lef},\sigma]}\sqrt{\kappa_{\max}-\kappa(\tilde\sigma)}\,d\tilde\sigma\,.
\]

\subsection{Estimates of Agmon}
We introduce the global weight
\[\Phi=\min\left(\Phi_{\righ},\Phi_{\lef}\right)\,,\]
where
\[\Phi_{\righ}(\sigma)=\int_{[s_{\righ},\sigma]} \sqrt{\mathfrak{v}(\tilde\sigma)} \, d\tilde\sigma\,,\quad\forall\sigma\in\omega_{\righ}\,,\qquad \Phi_{\lef}(\sigma)=\int_{[s_{\lef},\sigma]} \sqrt{\mathfrak{v}(\tilde\sigma)} \, d\tilde\sigma\,,\quad\forall\sigma\in\omega_{\lef}\,.\]
We stress one more time that the integration over the segment $[\sigma_1,\sigma_2]$ means the line integral along the boundary $\Gamma$ from the point $\sigma_1$ to the point $\sigma_2$ in the counterclockwise direction, see Figure~2.
\begin{figure}\label{fig2}
\begin{center}
\includegraphics[width=10cm]{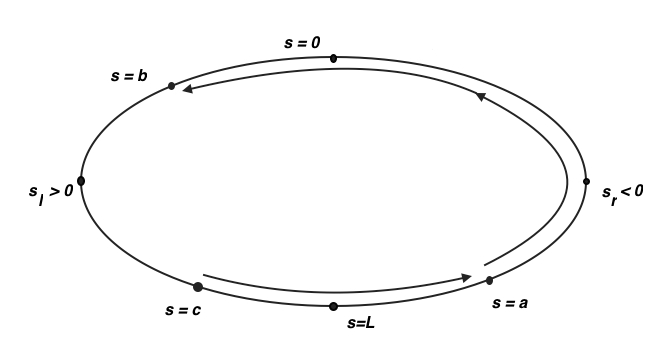} 
\end{center}
\caption{Illustration of the integral over a segment. In this case, we have $\int_{[a,b]}=\int_a^b$ and $\int_{[c,a]}=\int_c^L+\int_{-L}^a$.}
\end{figure}

In particular, we have
$$ |\Phi'(\sigma)|^2=\mathfrak v(\sigma) \,.$$
 Let us define
\begin{equation}\label{eq:def.S}
\mathsf{S}_{\mathsf{u}}=\Phi_\righ(s_\lef)\,,\quad  \mathsf{S}_{\mathsf{d}}=\Phi_\lef(s_\righ)\quad{\rm and}\quad \mathsf{S}=\min(\mathsf{S}_{\mathsf{u}},\mathsf{S}_{\mathsf{d}})\,.
\end{equation}
Note that, on the \enquote{upper part}, $\Phi_{\righ}+\Phi_{\lef}=\mathsf{S}_{\mathsf{u}}$ and on the \enquote{lower part} $\Phi_{\righ}+\Phi_{\lef}=\mathsf{S}_{\mathsf{d}}$. In particular, we have
\begin{equation}
\Phi_{\righ}+\Phi_{\lef}\geq\mathsf{S}\,.
\end{equation}

The following proposition may be established by using the same estimates as in the proof of Proposition \ref{prop.Agmon}.
\begin{proposition}
For all $\alpha\in(0,1)$, for all $C_{0}>0$, there exist positive constants $\hbar_{0}$, $A$, $c$, $C$ such that, for all $\hbar\in(0,\hbar_{0})$, $z\in[-1-\kappa_{\max}\hbar^2, -1-\kappa_{\max}\hbar^2+C_{0}\hbar^3]$, $u\in\widehat{\mathcal{D}}$,
\[c\hbar^3\|e^{\sqrt{1-\alpha}\Phi/\hbar}u\|_{L^2}\leq \|e^{\sqrt{1-\alpha}\Phi/\hbar}\left(\widehat{\mathcal{L}}_{\hbar}-z\right)u\|_{L^2}+C\hbar^3\|u\|_{L^2(\widehat{\mathcal{B}}(A\hbar^{\frac{1}{2}}))}\,,\]
and
\[\hbar^4\|\partial_{\sigma}(e^{\sqrt{1-\alpha}\Phi/\hbar}u)\|^2_{L^2}\leq C\hbar^{-3}\|e^{\sqrt{1-\alpha}\Phi/\hbar}\left(\widehat{\mathcal{L}}_{\hbar}-z\right)u\|^2_{L^2}+C\hbar^3\|u\|^2_{L^2(\widehat{\mathcal{B}}(A\hbar^{\frac{1}{2}}))}\,,\]
where $\widehat{\mathcal{B}}(\varrho)=\widehat{\mathcal{B}}_{\righ}(\varrho)\cup\widehat{\mathcal{B}}_{\lef}(\varrho)$, where $\widehat{\mathcal{B}}_r := \widehat{\mathcal{B}}_{\omega_r}$, resp.  $\widehat{\mathcal{B}}_\ell  := \widehat{\mathcal{B}}_{\omega_\ell}$ (cf Notation \ref{not.Bhat}).
\end{proposition}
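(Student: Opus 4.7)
The plan is to mimic the proof of Proposition \ref{prop.Agmon} essentially line-for-line, using now the global subsolution $\Psi=\sqrt{1-\alpha}\,\Phi$ with $\Phi=\min(\Phi_\righ,\Phi_\lef)$. The function $\Phi$ is Lipschitz on $(-L,L]$ (smooth except at the two points where $\Phi_\righ=\Phi_\lef$, i.e.\ the \enquote{top} and \enquote{bottom} cusp points of the two geodesics joining the wells) and satisfies the eikonal inequality $|\Phi'(\sigma)|^2\leq\mathfrak v(\sigma)$ almost everywhere. This is all the regularity needed to use it as an Agmon weight.

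First I would write the usual Agmon identity
\[
\langle(\widehat{\mathcal L}_\hbar-z)u,e^{2\Psi/\hbar}u\rangle=\widehat{\mathcal Q}_\hbar(e^{\Psi/\hbar}u)-\hbar^2\!\int\widehat a^{-2}|\Psi'|^2 e^{2\Psi/\hbar}|u|^2\,\widehat a\,d\sigma d\tau-z\,\|e^{\Psi/\hbar}u\|^2,
\]
and combine it with the Born--Oppenheimer lower bound of Lemma \ref{lem:lb-qf}, together with $|\widehat a^{-2}-1|=\mathcal{O}(\hbar)$ (which follows from the choice $\rho=\tfrac14$). Using $z\in[-1-\kappa_{\max}\hbar^2,-1-\kappa_{\max}\hbar^2+C_0\hbar^3]$, this yields
\[
\langle(\widehat{\mathcal L}_\hbar-z)u,e^{2\Psi/\hbar}u\rangle\geq\int\hbar^4\widehat a^{-2}|\partial_\sigma(e^{\Psi/\hbar}u)|^2\,\widehat a\,d\sigma d\tau+\int\bigl(\hbar^2(\mathfrak v-(1-\alpha)|\Phi'|^2)-\tilde C\hbar^3\bigr)|e^{\Psi/\hbar}u|^2\,\widehat a\,d\sigma d\tau,
\]
with $\tilde C>0$ independent of $\alpha$ and $A$. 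Since $\mathfrak v-(1-\alpha)|\Phi'|^2\geq\alpha\,\mathfrak v$ almost everywhere and both wells are non-degenerate, there is $c_0>0$ with $\mathfrak v(\sigma)\geq c_0\min(|\sigma-s_\righ|^2,|\sigma-s_\lef|^2)$, hence $\alpha\,\mathfrak v(\sigma)\geq c_0\,\alpha\,A^2\hbar$ on the complement of $\widehat{\mathcal B}(A\hbar^{1/2})$. Choosing $A$ large enough that $c_0\,\alpha\,A^2\geq 2\tilde C+1$ and isolating the contribution of $\widehat{\mathcal B}(A\hbar^{1/2})$ in the potential integral, the Cauchy--Schwarz inequality on the left-hand side produces at once both claimed estimates, exactly in the same way as \eqref{eq.estimate-weight1}--\eqref{eq.estimate-weight2} were derived.

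The only points requiring any care are notational: the near-well set $\widehat{\mathcal B}(A\hbar^{1/2})$ now has two connected components, one around each of $s_\righ,s_\lef$, and $\Phi$ is merely Lipschitz at the two cusp points where the minimum switches. Neither is an analytic obstacle, since the Agmon identity uses only that $\Psi\in W^{1,\infty}$ together with the pointwise inequality $|\Phi'|^2\leq\mathfrak v$ which holds almost everywhere; the double-well geometry enters only through the bookkeeping of the error term on the near-well region. This is the step where the choice $\Phi=\min(\Phi_\righ,\Phi_\lef)$ is crucial: it is the largest Lipschitz subsolution of the eikonal equation vanishing simultaneously at both wells, and it is precisely this choice that will later produce the exponential weight $e^{-\mathsf S/\hbar}$ governing the tunneling splitting through the identity $\Phi_\righ+\Phi_\lef=\mathsf S_\mathsf{u}$ (resp.\ $\mathsf S_\mathsf{d}$) on the upper (resp.\ lower) arc.
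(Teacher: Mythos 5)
Your proposal is correct and follows exactly the route the paper intends: the paper gives no separate proof of this proposition, stating only that it ``may be established by using the same estimates as in the proof of Proposition~\ref{prop.Agmon}'', and your argument carries that out with the global weight $\sqrt{1-\alpha}\,\Phi$, $\Phi=\min(\Phi_{\righ},\Phi_{\lef})$, which is precisely choice~(\ref{poids1}) of Proposition~\ref{prop.Agmon.choice} adapted to the two-well geometry. The points you single out (Lipschitz regularity of $\Phi$ at the switching points, the two-component near-well set, and the bound $\Phi=\mathcal{O}(A^2\hbar)$ on $\widehat{\mathcal{B}}(A\hbar^{1/2})$ needed to remove the exponential weight there) are indeed the only ones requiring comment, and you handle them correctly.
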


\subsection{Interaction matrix}
\subsubsection{Preliminary considerations}

\begin{definition}
Let us introduce the two quasimodes
\[f_{\hbar,\righ}=\chi_{\righ}\phi_{\hbar,\righ}\,,\qquad f_{\hbar,\lef}=\chi_{\lef}\phi_{\hbar,\lef}\,,\]
that clearly belong to the domain of $\widehat{\mathcal{L}}_{h}$.
\end{definition}
We use the following convenient notation.
\begin{notation}
For $M>0$, the notation $\tilde{\mathcal O}(e^{-M/h})$ (introduced by Helffer-Sj\"ostrand in \cite{HSj}) stands for a quantity $r(h,\eta)$ defined on a set of the form $(0,h_0)\times (0,\eta_0)$ and satisfying the following: There exists a function $\gamma:(0,\infty)\to\R$ such that $\lim_{\eta\to0}\gamma(\eta)=0$, and for all $\varepsilon>0$ and $\eta>0$,
$r(h,\eta)=\mathcal O(e^{(\varepsilon+\gamma(\eta)-M)/h})$. The parameter $\eta$ will measure the distance between $\complement\omega_{\alpha}$ and $\check \omega_{\alpha}$, for $\alpha\in\{\righ,\lef\}$.
\end{notation}
The following lemma is the consequence of Agmon's estimates  and considerations on the supports.
\begin{lemma}\label{lem.prelim}
For $\alpha\in\{\righ,\lef\}$, we let
\[r_{\hbar, \alpha}=(\widehat{\mathcal{L}}_{\hbar}-\mu(\hbar))f_{\hbar,\alpha}=[\widehat{\mathcal{L}}_{\righ},\chi_{\alpha}]\phi_{\hbar,\alpha}\,.\]
Then, we have
\begin{enumerate}[\rm (i)]
\item $r_{\hbar, \alpha}=\tilde{\mathcal{O}}(e^{-\mathsf{S}/\hbar})$,
\item $\langle r_{\hbar,\alpha},f_{\hbar, \alpha}\rangle=\tilde{\mathcal{O}}(e^{-2\mathsf{S}/\hbar})$ and $\langle r_{\hbar,\alpha},f_{\hbar, \beta}\rangle=\tilde{\mathcal{O}}(e^{-\mathsf{S}/\hbar})$ for $\alpha\neq\beta$,
\item $\langle f_{\hbar,\alpha}, f_{\hbar,\alpha}\rangle=1+\tilde{\mathcal{O}}(e^{-2\mathsf{S}/\hbar})$ and $\langle f_{\hbar,\alpha}, f_{\hbar,\beta}\rangle=\tilde{\mathcal{O}}(e^{-\mathsf{S}/\hbar})$ for $\alpha\neq\beta$,
\item If $\mathcal{F}=\mathsf{ span }\{f_{\hbar,\righ}\,,f_{\hbar,\lef}\}$, then $\dim\mathcal{F}=2$.
\end{enumerate}
\end{lemma}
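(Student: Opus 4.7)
The plan relies on three ingredients: (a) a commutator identity showing that $r_{\hbar,\alpha}$ is supported on $\supp\chi_\alpha'$, which lies in a $2\eta$-neighborhood of the opposite well $s_\beta$; (b) the fact that $\Phi_\alpha(\sigma)\ge \mathsf S-\gamma(\eta)$ on this support, with $\gamma(\eta)\to 0$, since as $\sigma\to s_\beta$ the Agmon distance $\Phi_\alpha(\sigma)$ tends to $\mathsf S_\mathsf u$ or $\mathsf S_\mathsf d$, both of which are $\ge\mathsf S$; and (c) the weighted Agmon estimates of Section~\ref{sec.WKBapprox} (Proposition~\ref{prop:WKB=gs} together with \eqref{eq. exp-u-approx}), which bound $\phi_{\hbar,\alpha}$ and $\partial_\sigma\phi_{\hbar,\alpha}$ in the weight $e^{\Phi_\alpha/\hbar}$.

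For (i), since on $\hat\omega_\alpha$ the operator $\widehat{\mathcal L}_\hbar$ coincides with $\widehat{\mathcal L}_\alpha$ as a differential expression, one obtains $r_{\hbar,\alpha}=[\widehat{\mathcal L}_\righ,\chi_\alpha]\phi_{\hbar,\alpha}$, a first-order operator in $\partial_\sigma$ with coefficients of size $\hbar^4$ supported in $\supp\chi_\alpha'$. Multiplying by $e^{\Phi_\alpha/\hbar}$ and using the Agmon bounds on $\phi_{\hbar,\alpha}$ and $\partial_\sigma\phi_{\hbar,\alpha}$, combined with $\Phi_\alpha\ge\mathsf S-\gamma(\eta)$ on $\supp\chi_\alpha'$, yields $\|r_{\hbar,\alpha}\|=\tilde{\mathcal O}(e^{-\mathsf S/\hbar})$.

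For (ii), the diagonal $\langle r_{\hbar,\alpha},f_{\hbar,\alpha}\rangle$ doubles the Agmon decay because $f_{\hbar,\alpha}=\chi_\alpha\phi_{\hbar,\alpha}$ itself carries the factor $e^{-\Phi_\alpha/\hbar}$ on $\supp\chi_\alpha'$; the off-diagonal $\langle r_{\hbar,\alpha},f_{\hbar,\beta}\rangle$ only benefits from the decay of $r_{\hbar,\alpha}$, since $f_{\hbar,\beta}$ is of order one on $\supp\chi_\alpha'$ (which is near $s_\beta$, the very well of $f_{\hbar,\beta}$). For (iii), write $\|f_{\hbar,\alpha}\|^2=1-\int(1-\chi_\alpha^2)|\phi_{\hbar,\alpha}|^2$; since $1-\chi_\alpha^2$ is supported where $\Phi_\alpha\ge\mathsf S-\gamma(\eta)$, Agmon yields $1+\tilde{\mathcal O}(e^{-2\mathsf S/\hbar})$. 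For the cross term $\langle f_{\hbar,\righ},f_{\hbar,\lef}\rangle$, the crucial input is the pointwise inequality $\Phi_\righ(\sigma)+\Phi_\lef(\sigma)\ge\mathsf S$ on $\omega_\righ\cap\omega_\lef$ (following from splitting the boundary into upper and lower arcs, as noted just after \eqref{eq:def.S}), combined with Agmon applied independently to each eigenfunction.

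Finally (iv) is immediate: by (iii) the Gram matrix of $(f_{\hbar,\righ},f_{\hbar,\lef})$ equals $I+\tilde{\mathcal O}(e^{-\mathsf S/\hbar})$, hence is invertible for $\hbar$ small (with $\eta$ fixed small), so the two quasimodes are linearly independent. The only real technicality, entirely bookkeeping, is to ensure that the $\eta$-dependent loss $\gamma(\eta)$ in every Agmon weight on $\supp\chi_\alpha'$ is absorbed into the $\tilde{\mathcal O}$ class, which is exactly the point of keeping $\eta$ as a free parameter in its definition.
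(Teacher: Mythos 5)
Your argument is correct and is precisely the ``Agmon estimates plus support considerations'' proof that the paper leaves implicit: localization of $[\widehat{\mathcal L}_{\hbar},\chi_{\alpha}]$ on $\supp\chi_\alpha'$ near the opposite well, the lower bound $\Phi_\alpha\geq \mathsf S-\gamma(\eta)$ there, the doubling of the decay for the diagonal terms, and $\Phi_{\righ}+\Phi_{\lef}\geq\mathsf S$ for the cross terms. The only minor adjustment is a citation: the weighted $L^2$ bounds on $\phi_{\hbar,\alpha}$ and $\partial_\sigma\phi_{\hbar,\alpha}$ are most directly obtained from Proposition~\ref{prop.Agmon} applied to the eigenpair (cf.\ the remark following it) with the weights of Proposition~\ref{prop.Agmon.choice}, rather than from Proposition~\ref{prop:WKB=gs}.
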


The following lemma states that the first two eigenvalues of $\widehat{\mathcal{L}}_{\hbar}$ are close to $\mu(\hbar)$ (the common first eigenvalue of the two mini-well operators) modulo $\tilde{\mathcal{O}}(e^{-\mathsf{S}/\hbar})$. The proof is standard (see \cite{DS99} or the presentation in \cite{BHR-c}).

\begin{lemma}\label{lem.expSh}
Let us define $\mathcal{G}=\mathsf{range}\,\mathds{1}_{I_{\hbar}}(\widehat{\mathcal{L}}_{\hbar})$ where $I_{\hbar}=(-\infty, -1-\kappa_{\max}\hbar^2+2\gamma\hbar^3)$. Then we have
\begin{enumerate}[\rm (i)]
\item $\mathrm{dist} (\mathrm{sp}(\widehat{\mathcal{L}}_{\hbar}),\mu(\hbar))=\tilde{\mathcal{O}}(e^{-\mathsf{S}/\hbar})$,
\item $\langle (\widehat{\mathcal{L}}_{\hbar}-\mu(\hbar)) u, u\rangle\geq \gamma\,\hbar^3\|u\|^2\,, \forall u\in\mathcal{G}^\perp$,
\item $\dim\mathcal{G}=2$,
\item $\mathrm{sp}(\widehat{\mathcal{L}}_{\hbar})\cap I_{\hbar}\subset [\mu(\hbar)-\tilde{\mathcal{O}}(e^{-\mathsf{S}/\hbar}),\mu(\hbar)+\tilde{\mathcal{O}}(e^{-\mathsf{S}/\hbar})]$.
\end{enumerate}
\end{lemma}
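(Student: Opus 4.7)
The plan is to follow the classical Helffer-Sj\"ostrand double-well strategy, combining the quasimodes $f_{\hbar,\righ},f_{\hbar,\lef}$ of Lemma~\ref{lem.prelim} with the simple-mini-well spectral gap \eqref{eq:sp-gap.1well}. Item (i) follows from the spectral theorem:
\[
\mathrm{dist}\bigl(\mu(\hbar),\mathrm{sp}(\widehat{\mathcal L}_\hbar)\bigr)\,\|f_{\hbar,\alpha}\|\le\|(\widehat{\mathcal L}_\hbar-\mu(\hbar))f_{\hbar,\alpha}\|=\tilde{\mathcal O}(e^{-\mathsf S/\hbar})
\]
by Lemma~\ref{lem.prelim}(i), together with $\|f_{\hbar,\alpha}\|\to 1$ from Lemma~\ref{lem.prelim}(iii). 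For the lower bound $\dim\mathcal G\ge 2$, I set $\Pi=\mathds{1}_{I_\hbar}(\widehat{\mathcal L}_\hbar)$. The asymptotics $\mu(\hbar)=-1-\kappa_{\max}\hbar^2+\gamma\hbar^3+\mathcal O(\hbar^4)$ from \eqref{eq:Tay-mu} show that the distance $d_\hbar$ from $\mu(\hbar)$ to $\mathrm{sp}(\widehat{\mathcal L}_\hbar)\setminus I_\hbar$ is $\gtrsim\gamma\hbar^3$, so the spectral theorem gives $\|(1-\Pi)f_{\hbar,\alpha}\|\le d_\hbar^{-1}\|(\widehat{\mathcal L}_\hbar-\mu(\hbar))f_{\hbar,\alpha}\|=\tilde{\mathcal O}(e^{-\mathsf S/\hbar})$. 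Combined with the near-orthonormality from Lemma~\ref{lem.prelim}(iii), $\Pi f_{\hbar,\righ}$ and $\Pi f_{\hbar,\lef}$ remain linearly independent.

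The core of the proof is the upper bound $\dim\mathcal G\le 2$, which also yields (ii). I would use min-max with the test space $\mathcal F=\mathsf{span}\{f_{\hbar,\righ},f_{\hbar,\lef}\}$: introduce an IMS partition $\chi_\righ^2+\chi_\lef^2+\chi_0^2=1$ on the $\sigma$-variable, where $\chi_\alpha$ coincides with the cut-off used to construct $f_{\hbar,\alpha}$ and $\chi_0$ is supported where $\mathfrak v\ge c>0$. Since the $\chi_j$ depend only on $\sigma$, a direct computation gives the IMS inequality
\[
\widehat{\mathcal Q}_\hbar(u)\ge\widehat{\mathcal Q}_\righ(\chi_\righ u)+\widehat{\mathcal Q}_\lef(\chi_\lef u)+\widehat{\mathcal Q}_\hbar(\chi_0 u)-C\hbar^4\|u\|^2.
\]
For $u\perp\mathcal F$ the crucial orthogonality $\langle\chi_\alpha u,\phi_{\hbar,\alpha}\rangle=\langle u,f_{\hbar,\alpha}\rangle=0$ holds, so min-max in the single-well operator together with \eqref{eq:sp-gap.1well} yields $\widehat{\mathcal Q}_\alpha(\chi_\alpha u)\ge\mu_{2,\alpha}(\hbar)\|\chi_\alpha u\|^2\ge(\mu(\hbar)+\tfrac32\gamma\hbar^3)\|\chi_\alpha u\|^2$ for small $\hbar$, while Lemma~\ref{lem:lb-qf} on $\supp\chi_0$ gives $\widehat{\mathcal Q}_\hbar(\chi_0 u)\ge(\mu(\hbar)+c'\hbar^2)\|\chi_0 u\|^2$. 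Summing and absorbing the $\mathcal O(\hbar^4)$ error yields $\widehat{\mathcal Q}_\hbar(u)\ge(\mu(\hbar)+\tfrac32\gamma\hbar^3)\|u\|^2$. Min-max then gives $\mu_3(\widehat{\mathcal L}_\hbar)\ge\mu(\hbar)+\tfrac32\gamma\hbar^3>\sup I_\hbar$, so $\dim\mathcal G=2$. Any $u\in\mathcal G^\perp$ is then orthogonal to the first two eigenvectors, whence $\widehat{\mathcal Q}_\hbar(u)\ge\mu_3(\widehat{\mathcal L}_\hbar)\|u\|^2\ge(\mu(\hbar)+\gamma\hbar^3)\|u\|^2$, which is (ii).

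Finally, (iv) is obtained by expanding $\Pi f_{\hbar,\alpha}=a_\alpha^1\psi_1+a_\alpha^2\psi_2$ in an orthonormal basis of eigenvectors of $\widehat{\mathcal L}_\hbar\mid_{\mathcal G}$ with eigenvalues $\lambda_1,\lambda_2\in I_\hbar$. The identity
\[
\sum_k|a_\alpha^k|^2(\lambda_k-\mu(\hbar))^2=\|(\widehat{\mathcal L}_\hbar-\mu(\hbar))\Pi f_{\hbar,\alpha}\|^2,
\]
combined with the estimate $\|(\widehat{\mathcal L}_\hbar-\mu(\hbar))\Pi f_{\hbar,\alpha}\|=\tilde{\mathcal O}(e^{-\mathsf S/\hbar})$ (which follows by splitting $f_{\hbar,\alpha}=\Pi f_{\hbar,\alpha}+(1-\Pi)f_{\hbar,\alpha}$ and using the bounds of paragraph~1), gives $\sum_k|a_\alpha^k|^2(\lambda_k-\mu(\hbar))^2=\tilde{\mathcal O}(e^{-2\mathsf S/\hbar})$ for each $\alpha$. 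Since $(a_\alpha^k)_{\alpha,k}$ is the matrix whose rows are the coordinates of the near-orthonormal family $\{\Pi f_{\hbar,\righ},\Pi f_{\hbar,\lef}\}$, it is invertible with bounded inverse, so both column norms satisfy $\sum_\alpha|a_\alpha^k|^2\ge c>0$, forcing $|\lambda_k-\mu(\hbar)|=\tilde{\mathcal O}(e^{-\mathsf S/\hbar})$ for $k=1,2$.

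The main obstacle I anticipate is the IMS step in the second paragraph: the partition must simultaneously ensure that $\chi_\alpha u$ lies in the form domain of $\widehat{\mathcal L}_\alpha$ (forcing $\supp\chi_\alpha\subset\omega_\alpha$ compatibly with the cut-offs used to define $f_{\hbar,\alpha}$), that $\mathfrak v$ is uniformly positive on $\supp\chi_0$ (requiring a careful choice of the width $\eta$), and that the orthogonality $u\perp\mathcal F$ transfers cleanly into $\langle\chi_\alpha u,\phi_{\hbar,\alpha}\rangle=0$---which works precisely because $f_{\hbar,\alpha}=\chi_\alpha\phi_{\hbar,\alpha}$ is built from the very same cut-off.
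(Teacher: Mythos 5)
Your route is the standard Helffer--Sj\"ostrand double-well argument (the one the paper points to by citing \cite{DS99} and \cite{BHR-c}): quasimodes $f_{\hbar,\alpha}$ plus the spectral theorem give (i), (iv), and $\dim\mathcal G\geq 2$, while an IMS decomposition combined with the single-well gap \eqref{eq:sp-gap.1well} is meant to give (ii) and $\dim\mathcal G\leq 2$. The first and third paragraphs are correct. The problem is the IMS step in the second paragraph: you posit $\chi_\righ^2+\chi_\lef^2+\chi_0^2=1$ with $\chi_\alpha$ being the \emph{same} cut-offs $\chi_{\check\omega_\alpha}$ used to build $f_{\hbar,\alpha}$. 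But $\chi_\righ=1$ on $\check\omega_\righ=\{|\sigma-s_\lef|>2\eta\}$ and $\chi_\lef=1$ on $\check\omega_\lef=\{|\sigma-s_\righ|>2\eta\}$, and these two sets overlap on the whole circle minus the two $2\eta$-balls around the wells. There $\chi_\righ^2+\chi_\lef^2=2$, so no nonnegative $\chi_0^2$ can complete it to a quadratic partition of unity. The overlap is precisely the region where you want $\chi_0$ to live (where $\mathfrak v\geq c>0$), which makes the problem worse, not better. Thus the orthogonality $\langle\chi_\alpha u,\phi_{\hbar,\alpha}\rangle=\langle u,f_{\hbar,\alpha}\rangle=0$, which you single out as the crucial ingredient, rests on a partition that does not exist.

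The standard repair is to introduce a genuine quadratic partition $\xi_\righ^2+\xi_\lef^2=1$, with $\xi_\alpha$ supported in $\omega_\alpha$ (so $\xi_\alpha u\in\widehat V_\alpha$) and $\xi_\alpha=1$ on a fixed neighborhood of $s_\alpha$; no third piece is needed since $\omega_\righ\cup\omega_\lef$ covers the circle. Then $\langle\xi_\alpha u,\phi_{\hbar,\alpha}\rangle$ is no longer exactly zero for $u\perp\mathcal F$, but equals $\langle u,(\xi_\alpha-\chi_\alpha)\phi_{\hbar,\alpha}\rangle$, and $\xi_\alpha-\chi_\alpha$ is supported away from $s_\alpha$, where $\Phi_\alpha$ is bounded below by a positive fraction of $\mathsf S$. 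By the tangential Agmon estimates (Proposition~\ref{prop.Agmon}), this inner product is $\tilde{\mathcal O}(e^{-c\mathsf S/\hbar})\|u\|$ for some $c>0$, so the min-max application of \eqref{eq:sp-gap.1well} only picks up an exponentially small correction, negligible against the $\hbar^3$ gap. With that modification your second paragraph goes through and closes (ii), (iii).
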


\subsubsection{Interaction matrix}
We want a more accurate description of the splitting between the first two eigenvalues of $\widehat{\mathcal{L}}_{\hbar}$. For that purpose, we will consider the restriction of $\widehat{\mathcal{L}}_{\hbar}$ to the space  $\mathcal{G}$ generated by the first two eigenfunctions and we will exhibit an orthonormal basis of this space that allows us to compute asymptotically the eigenvalues of the corresponding $2\times 2$ matrix.

Let us introduce $\Pi$ the orthogonal projection on $\mathcal{G}$ and $g_{\hbar,\alpha}=\Pi f_{\hbar, \alpha}$. As a consequence of Lemma \ref{lem.expSh} and of the spectral theorem, we get the following lemma.
\begin{lemma}
We have in $H^1$,
\[f_{\hbar,\alpha}-g_{\hbar,\alpha}=\tilde{\mathcal{O}}(e^{-\mathsf{S}/\hbar})\,.\]
\end{lemma}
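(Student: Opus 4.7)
The strategy is the classical spectral-theoretic estimate (in the Helffer--Sj\"ostrand spirit) saying that a quasimode with small residual, after being projected onto the spectral subspace of eigenvalues close to its test energy, changes only by the residual divided by the gap.

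First I would exploit that $\mathcal{G}=\mathrm{range}\,\mathds{1}_{I_{\hbar}}(\widehat{\mathcal{L}}_{\hbar})$ is a spectral subspace, so $\Pi$ commutes with $\widehat{\mathcal{L}}_{\hbar}$. Applying $I-\Pi$ to the identity
\[
(\widehat{\mathcal{L}}_{\hbar}-\mu(\hbar))f_{\hbar,\alpha}=r_{\hbar,\alpha},
\]
I obtain
\[
(\widehat{\mathcal{L}}_{\hbar}-\mu(\hbar))(I-\Pi)f_{\hbar,\alpha}=(I-\Pi)r_{\hbar,\alpha}.
\]
By Lemma~\ref{lem.expSh}(ii), on $\mathcal{G}^\perp$ the operator $\widehat{\mathcal{L}}_{\hbar}-\mu(\hbar)$ is bounded below by $\gamma\hbar^3$. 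Pairing the last identity with $(I-\Pi)f_{\hbar,\alpha}$ and using Cauchy--Schwarz yields
\[
\|(I-\Pi)f_{\hbar,\alpha}\|_{L^2}\leq \frac{1}{\gamma\hbar^3}\|(I-\Pi)r_{\hbar,\alpha}\|_{L^2}\leq \frac{1}{\gamma\hbar^3}\|r_{\hbar,\alpha}\|_{L^2}.
\]
By Lemma~\ref{lem.prelim}(i), $\|r_{\hbar,\alpha}\|_{L^2}=\tilde{\mathcal O}(e^{-\mathsf S/\hbar})$; since the prefactor $\hbar^{-3}$ is polynomial, it is absorbed into the $\tilde{\mathcal O}$ symbol (any $\hbar^{-N}$ is dominated by $e^{\varepsilon/\hbar}$), giving the desired $L^2$ bound
\[
\|f_{\hbar,\alpha}-g_{\hbar,\alpha}\|_{L^2}=\tilde{\mathcal O}(e^{-\mathsf S/\hbar}).
\]

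To upgrade to $H^1$, I would test the last displayed equation against $(I-\Pi)f_{\hbar,\alpha}$ itself to get
\[
\widehat{\mathcal{Q}}_{\hbar}\bigl((I-\Pi)f_{\hbar,\alpha}\bigr)=\mu(\hbar)\,\|(I-\Pi)f_{\hbar,\alpha}\|_{L^2}^{2}+\bigl\langle(I-\Pi)r_{\hbar,\alpha},(I-\Pi)f_{\hbar,\alpha}\bigr\rangle.
\]
Both terms on the right are $\tilde{\mathcal O}(e^{-2\mathsf S/\hbar})$ by the previous step and the uniform boundedness of $\mu(\hbar)$. Using the coercivity of the form $\widehat{\mathcal{Q}}_{\hbar}$ modulo a bounded $L^2$ term (which follows from the 1D lower bound of Lemma~\ref{lem:lb-qf}: the trace term is controlled by the normal part, so $\widehat{\mathcal{Q}}_{\hbar}(u)+C\|u\|_{L^2}^{2}$ dominates $\int(|\partial_\tau u|^2+\hbar^4|\partial_\sigma u|^2)\widehat a\,d\sigma d\tau$), I recover
\[
\|\partial_\tau(f_{\hbar,\alpha}-g_{\hbar,\alpha})\|_{L^2}+\hbar^{2}\|\partial_\sigma(f_{\hbar,\alpha}-g_{\hbar,\alpha})\|_{L^2}=\tilde{\mathcal O}(e^{-\mathsf S/\hbar}),
\]
and once more the $\hbar^{-2}$ loss in the $\sigma$-derivative is polynomial and hence absorbed by $\tilde{\mathcal O}$, yielding the full $H^1$ estimate.

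The only mildly delicate point is the very last step: making sure that the $H^1$-seminorm is indeed controlled by $\widehat{\mathcal{Q}}_{\hbar}+C\|\cdot\|_{L^2}^{2}$ with constants that are at worst polynomial in $\hbar$. This is where the scaling of the rescaled operator $\widehat{\mathcal L}_\hbar$ costs a factor $\hbar^{-2}$ for the tangential derivative; it is benign because the $\tilde{\mathcal O}$ notation eats any polynomial prefactor via its parameter $\varepsilon$. Apart from that, the argument is the standard contour/spectral-gap trick and contains no new technical difficulty beyond what is already assembled in Lemmas~\ref{lem.prelim} and \ref{lem.expSh}.
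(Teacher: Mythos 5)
Your argument is correct and coincides with the paper's (implicit) proof: the paper simply invokes Lemma~\ref{lem.expSh} and the spectral theorem, and your unpacking via the spectral-gap estimate on $\mathcal G^\perp$, Cauchy--Schwarz, and absorption of polynomial prefactors into the $\tilde{\mathcal O}$ symbol is exactly the standard Helffer--Sj\"ostrand route. The only small imprecision is your appeal to Lemma~\ref{lem:lb-qf} for coercivity of the form---that lemma in fact discards the $\partial_\tau$-term in its lower bound---while the bound you actually need is the elementary trace estimate $|u(\sigma,0)|^2\leq\epsilon\|\partial_\tau u(\sigma,\cdot)\|^2+\epsilon^{-1}\|u(\sigma,\cdot)\|^2$, which does give the claimed domination of the $H^1$-seminorm by $\widehat{\mathcal{Q}}_{\hbar}+C\|\cdot\|_{L^2}^2$ without any issue.
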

From this lemma and Lemma \ref{lem.prelim}, we deduce the following.
\begin{lemma}
Let us define the $2\times 2$ matrix $\mathsf{T}$ by $\mathsf{T}_{\alpha,\beta}=\langle f_{\hbar,\alpha}, f_{\hbar, \beta}\rangle$ for $\alpha\neq\beta$ and $0$ otherwise. Then, we have
\begin{enumerate}[\rm (i)]
\item $\mathsf{T}=\tilde{\mathcal{O}}(e^{-\mathsf{S}/\hbar})$,
\item $(\langle f_{\hbar,\alpha}, f_{\hbar,\beta}\rangle)_{\alpha,\beta}=\mathsf{Id}+\mathsf{T}+\tilde{\mathcal{O}}(e^{-2\mathsf{S}/\hbar})$,
\item $\langle g_{\hbar,\alpha}, g_{\hbar,\beta}\rangle=\langle f_{\hbar,\alpha}, f_{\hbar,\beta}\rangle+\tilde{\mathcal{O}}(e^{-2\mathsf{S}/\hbar})$,
\item $(\langle g_{\hbar,\alpha}, g_{\hbar,\beta}\rangle)_{\alpha,\beta}=\mathsf{Id}+\mathsf{T}+\tilde{\mathcal{O}}(e^{-2\mathsf{S}/\hbar})$.
\end{enumerate}
\end{lemma}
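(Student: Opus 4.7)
The four assertions can be stripped down to one genuine computation: the preservation of the Gram matrix under the orthogonal projection $\Pi$. Everything else is bookkeeping built on top of the preceding lemma.

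First, item (i) is just the off-diagonal part of Lemma \ref{lem.prelim} (iii): by construction $\mathsf{T}$ has zero diagonal, and the off-diagonal entries are exactly $\langle f_{\hbar,\alpha}, f_{\hbar,\beta}\rangle$ with $\alpha\neq\beta$, which are $\tilde{\mathcal{O}}(e^{-\mathsf{S}/\hbar})$. Item (ii) then follows by writing the full Gram matrix of the $f_{\hbar,\alpha}$ as the sum of its diagonal and off-diagonal parts: Lemma \ref{lem.prelim} (iii) gives the diagonal entries as $1+\tilde{\mathcal{O}}(e^{-2\mathsf{S}/\hbar})$, which absorb into $\mathsf{Id}+\tilde{\mathcal{O}}(e^{-2\mathsf{S}/\hbar})$, while the off-diagonal part is exactly $\mathsf{T}$.

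The substantive step is (iii). I would decompose
\begin{equation*}
f_{\hbar,\alpha}=g_{\hbar,\alpha}+r_{\hbar,\alpha}\,,\qquad r_{\hbar,\alpha}:=(I-\Pi)f_{\hbar,\alpha}\,,
\end{equation*}
and use two facts: first, the previous lemma gives $\|r_{\hbar,\alpha}\|_{L^2}\leq\|r_{\hbar,\alpha}\|_{H^1}=\tilde{\mathcal{O}}(e^{-\mathsf{S}/\hbar})$; second, since $\Pi$ is the orthogonal projector on $\mathcal{G}$ and $g_{\hbar,\beta}\in\mathcal{G}$, the cross products $\langle g_{\hbar,\alpha}, r_{\hbar,\beta}\rangle$ and $\langle r_{\hbar,\alpha}, g_{\hbar,\beta}\rangle$ vanish identically. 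Expanding
\begin{equation*}
\langle f_{\hbar,\alpha}, f_{\hbar,\beta}\rangle
=\langle g_{\hbar,\alpha}, g_{\hbar,\beta}\rangle
+\langle g_{\hbar,\alpha}, r_{\hbar,\beta}\rangle
+\langle r_{\hbar,\alpha}, g_{\hbar,\beta}\rangle
+\langle r_{\hbar,\alpha}, r_{\hbar,\beta}\rangle\,,
\end{equation*}
the two middle terms disappear, and Cauchy-Schwarz applied to the last term yields $|\langle r_{\hbar,\alpha}, r_{\hbar,\beta}\rangle|\leq\|r_{\hbar,\alpha}\|\,\|r_{\hbar,\beta}\|=\tilde{\mathcal{O}}(e^{-2\mathsf{S}/\hbar})$, which is exactly (iii).

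Finally, (iv) is an immediate consequence of (ii) and (iii): substituting $\langle g_{\hbar,\alpha}, g_{\hbar,\beta}\rangle=\langle f_{\hbar,\alpha}, f_{\hbar,\beta}\rangle+\tilde{\mathcal{O}}(e^{-2\mathsf{S}/\hbar})$ into the matrix identity of (ii) leaves the $\mathsf{Id}+\mathsf{T}$ structure intact while keeping the error at $\tilde{\mathcal{O}}(e^{-2\mathsf{S}/\hbar})$. I do not foresee any real obstacle here; the only subtlety is that the decay of $r_{\hbar,\alpha}$ was only asserted in $H^1$ in the previous lemma, so one should invoke the continuous embedding $H^1\hookrightarrow L^2$ (and restriction to a bounded set, which is harmless) before applying Cauchy-Schwarz.
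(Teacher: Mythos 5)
Your proof is correct and follows the same (standard Helffer--Sj\"ostrand) route the paper implicitly relies on: item (i)--(ii) are direct bookkeeping from the preliminary lemma, and the key point for (iii) is exactly the orthogonality $\langle \Pi f_{\hbar,\alpha},(I-\Pi)f_{\hbar,\beta}\rangle=0$ combined with Cauchy--Schwarz on the residual term. Only cosmetic caveat: rename your residual $(I-\Pi)f_{\hbar,\alpha}$, since the symbol $r_{\hbar,\alpha}$ is already reserved in the paper for $(\widehat{\mathcal{L}}_{\hbar}-\mu(\hbar))f_{\hbar,\alpha}$.
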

Let us now examine the so-called interaction matrix. The family $(g_{\hbar,\alpha})$ generates $\mathcal{G}$ but is a priori not orthonormal. Thus we use the Gram-Schmidt matrix $\mathsf{G}=(\langle g_{\hbar,\alpha}, g_{\hbar,\beta}\rangle)_{\alpha,\beta}$ and we let $\mathsf{g}=g\mathsf{G}^{-\frac{1}{2}}$ where $g$ is the row vector $(g_{\hbar,\lef}, g_{\hbar, \righ})$. The family $\mathsf{g}$ is now an orthonormal basis of $\mathcal{G}$. Let $\mathsf{M}$ be the (interaction) matrix of (the restriction of) $\widehat{\mathcal{L}}_{\hbar}$ in the basis $\mathsf{g}$.
\begin{proposition}\label{prop.splittingLhat}
We have
\[\mathsf{M}=\mu(\hbar)\mathsf{Id}+\mathsf{W}+\tilde{\mathcal{O}}(e^{-2\mathsf{S}/\hbar})\,,\]
where $\mathsf{W}$ is defined by $w_{\alpha,\beta}=\langle r_{\hbar,\alpha}, f_{\hbar, \beta}\rangle$ if $\alpha\neq\beta$ and $0$ otherwise. Moreover $\mathsf{W}$ is symmetric.
In particular, the splitting between the first two eigenvalues of $\widehat{\mathcal{L}}_{\hbar}$ is given by
\[\hat\lambda_{2}(\hbar)-\hat\lambda_{1}(\hbar)=2|w_{\lef,\righ}(\hbar)|+\tilde{\mathcal{O}}(e^{-2\mathsf{S}/\hbar})\,.\]
\end{proposition}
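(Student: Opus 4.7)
The strategy is the standard Helffer--Sjöstrand computation of the interaction matrix, adapted to the geometry of the two mini-wells. First I would express the action of $\widehat{\mathcal L}_\hbar$ on the projected quasimodes: since $\mathcal G$ is invariant under $\widehat{\mathcal L}_\hbar$ and $\Pi$ commutes with $\widehat{\mathcal L}_\hbar$ on $\mathcal G$, one has
\[
\widehat{\mathcal L}_\hbar g_{\hbar,\alpha}=\Pi \widehat{\mathcal L}_\hbar f_{\hbar,\alpha}=\mu(\hbar)g_{\hbar,\alpha}+\Pi r_{\hbar,\alpha}\,.
\]
Taking scalar products with $g_{\hbar,\beta}$ and using that $\Pi g_{\hbar,\beta}=g_{\hbar,\beta}$, followed by $g_{\hbar,\beta}-f_{\hbar,\beta}=\tilde{\mathcal O}(e^{-\mathsf S/\hbar})$ together with $r_{\hbar,\alpha}=\tilde{\mathcal O}(e^{-\mathsf S/\hbar})$ from Lemma~\ref{lem.prelim}, one obtains
\[
\langle \widehat{\mathcal L}_\hbar g_{\hbar,\alpha},g_{\hbar,\beta}\rangle=\mu(\hbar)\langle g_{\hbar,\alpha},g_{\hbar,\beta}\rangle+\langle r_{\hbar,\alpha},f_{\hbar,\beta}\rangle+\tilde{\mathcal O}(e^{-2\mathsf S/\hbar})\,.
\]

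Next I would pass to the orthonormal basis $\mathsf g=g\,\mathsf G^{-1/2}$. Writing in matrix form, $\mathsf M=\mathsf G^{-1/2}\mathsf M_g\mathsf G^{-1/2}$ with $\mathsf M_g=\mu(\hbar)\mathsf G+\mathsf R+\tilde{\mathcal O}(e^{-2\mathsf S/\hbar})$, where $\mathsf R_{\alpha,\beta}=\langle r_{\hbar,\alpha},f_{\hbar,\beta}\rangle$. The diagonal of $\mathsf R$ is $\tilde{\mathcal O}(e^{-2\mathsf S/\hbar})$ by Lemma~\ref{lem.prelim}(ii), so $\mathsf R=\mathsf W+\tilde{\mathcal O}(e^{-2\mathsf S/\hbar})$. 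Since $\mathsf G=\mathsf{Id}+\mathsf T+\tilde{\mathcal O}(e^{-2\mathsf S/\hbar})$ with $\mathsf T=\tilde{\mathcal O}(e^{-\mathsf S/\hbar})$, a Neumann expansion gives $\mathsf G^{-1/2}=\mathsf{Id}-\tfrac12\mathsf T+\tilde{\mathcal O}(e^{-2\mathsf S/\hbar})$. The two cross terms $\mathsf T\mathsf W$ and $\mathsf W\mathsf T$ are each of order $\tilde{\mathcal O}(e^{-2\mathsf S/\hbar})$ (product of $\tilde{\mathcal O}(e^{-\mathsf S/\hbar})$ terms), so after careful bookkeeping,
\[
\mathsf M=\mu(\hbar)\mathsf{Id}+\mathsf W+\tilde{\mathcal O}(e^{-2\mathsf S/\hbar})\,,
\]
which is the asserted expansion.

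For the symmetry of $\mathsf W$, I would invoke the self-adjointness of $\widehat{\mathcal L}_\hbar$: from $\langle \widehat{\mathcal L}_\hbar f_{\hbar,\alpha},f_{\hbar,\beta}\rangle=\langle f_{\hbar,\alpha},\widehat{\mathcal L}_\hbar f_{\hbar,\beta}\rangle$, and the identity $\widehat{\mathcal L}_\hbar f_{\hbar,\alpha}=\mu(\hbar)f_{\hbar,\alpha}+r_{\hbar,\alpha}$, one gets $\langle r_{\hbar,\alpha},f_{\hbar,\beta}\rangle=\langle f_{\hbar,\alpha},r_{\hbar,\beta}\rangle$, and since all quantities are real, $w_{\alpha,\beta}=w_{\beta,\alpha}$.

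Finally, the splitting is read off from the $2\times 2$ matrix. Since $\mathsf M$ is symmetric with equal diagonal entries (both equal to $\mu(\hbar)$ modulo $\tilde{\mathcal O}(e^{-2\mathsf S/\hbar})$) and off-diagonal entry $w_{\lef,\righ}+\tilde{\mathcal O}(e^{-2\mathsf S/\hbar})$, its two eigenvalues are $\mu(\hbar)\pm|w_{\lef,\righ}|+\tilde{\mathcal O}(e^{-2\mathsf S/\hbar})$, whence $\hat\lambda_2(\hbar)-\hat\lambda_1(\hbar)=2|w_{\lef,\righ}(\hbar)|+\tilde{\mathcal O}(e^{-2\mathsf S/\hbar})$ by invoking Lemma~\ref{lem.expSh} to identify the two eigenvalues of $\mathsf M$ with the first two eigenvalues of $\widehat{\mathcal L}_\hbar$. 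The main bookkeeping obstacle is to make sure that no cross term of size $\tilde{\mathcal O}(e^{-2\mathsf S/\hbar})$ is accidentally promoted to $\tilde{\mathcal O}(e^{-\mathsf S/\hbar})$ when expanding $\mathsf G^{-1/2}\mathsf R\mathsf G^{-1/2}$; this is where keeping track of which objects are off-diagonal (and therefore gain an extra exponential factor when multiplied) becomes crucial.
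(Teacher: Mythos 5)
Your proof is correct and follows exactly the standard Helffer--Sj\"ostrand interaction-matrix computation that the paper itself relies on (the paper does not write out this proposition's proof, but builds it on Lemmas~\ref{lem.prelim}--\ref{lem.expSh} and refers to \cite{DS99} and \cite{BHR-c} for the standard presentation, which is what you have reproduced). The key steps --- $\widehat{\mathcal L}_\hbar g_{\hbar,\alpha}=\mu(\hbar)g_{\hbar,\alpha}+\Pi r_{\hbar,\alpha}$, the replacement of $g_{\hbar,\beta}$ by $f_{\hbar,\beta}$ at cost $\tilde{\mathcal O}(e^{-2\mathsf S/\hbar})$, the Neumann expansion of $\mathsf G^{-1/2}$ with the crucial observation that $\mathsf T\mathsf W$ and $\mathsf W\mathsf T$ gain an extra exponential factor, the symmetry of $\mathsf W$ via self-adjointness, and the eigenvalue formula for a real symmetric $2\times 2$ matrix with nearly equal diagonal --- are all present and correctly justified.
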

\subsection{Computation of the interaction}
Now the problem is to estimate the interaction term $w_{\lef,\righ}(\hbar)$ given by
\[w_{\lef,\righ}(\hbar)=\langle(\widehat{\mathcal{L}}_{\hbar}-\mu(\hbar))f_{\hbar,\lef},f_{\hbar,\righ} \rangle=\langle[\widehat{\mathcal{L}}_{\hbar},\chi_{\lef}]\phi_{\hbar,\lef},\chi_{\righ}\phi_{\hbar,\righ} \rangle\,.\]
Let us recall that
\[\widehat{\mathcal L}_{\righ}=-\hbar^4\,\widehat a^{-1} (\partial_\sigma\widehat a^{-1})\partial_\sigma-\widehat a^{-1}(\partial_\tau\widehat a)\partial_\tau\,.\]
Since $\chi_{\lef}$ does not depend on $\tau$, we get
\[w_{\lef,\righ}(\hbar)=-\hbar^4\langle[\widehat a^{-1} (\partial_\sigma\widehat a^{-1})\partial_\sigma,\chi_{\lef}]\phi_{\hbar,\lef},\chi_{\righ}\phi_{\hbar,\righ} \rangle\,,\]
where
$$\chi_\lef(\sigma)=\chi_\righ(-\sigma)\,.$$
After the computation of the commutator and an integration by parts (with respect to $\sigma$) to eliminate $\chi''_{\lef}$, we get
$$
w_{\lef,\righ}(\hbar)=\hbar^4\int_{\widehat{\mathcal V}_{\rho}} \widehat{a}^{-1}\chi_{\righ}\, \chi'_{\lef}\left((\partial_{\sigma}\phi_{\hbar, \righ})\phi_{\hbar, \lef}-\phi_{\hbar, \righ}(\partial_{\sigma}\phi_{\hbar, \lef})\right)d\sigma d\tau\,.
$$
Since $\chi_{\righ}=1$ in the support of $\chi'_\lef$, we get,
\[ w_{\lef,\righ}(\hbar)=\hbar^4\int_{\widehat{\mathcal V}_{\rho}} \widehat{a}^{-1}\chi_{\lef}'\left((\partial_{\sigma}\phi_{\hbar, \righ})\phi_{\hbar, \lef}-\phi_{\hbar, \righ}(\partial_{\sigma}\phi_{\hbar, \lef})\right)d\sigma d\tau\,.\]
Then, we integrate by parts and use the fact that $\phi_{\hbar,\alpha}$ is an eigenfunction of $\widehat{\mathcal{L}}_{\alpha}$ to get
\[w_{\lef,\righ}(\hbar)=w^{\mathsf{u}}_{\lef,\righ}(\hbar)+w^{\mathsf{d}}_{\lef,\righ}(\hbar)\,,\]
where
\begin{align*}
w^{\mathsf{u}}_{\lef,\righ}(\hbar)&=\hbar^4\int_{0}^{T  } \widehat{a}^{-1}\left\{\phi_{\hbar, \lef}(\partial_{\sigma}\phi_{\hbar, \righ})-\phi_{\hbar, \righ}(\partial_{\sigma}\phi_{\hbar, \lef})\right\}(0, \tau)d\tau\,,\\
w^{\mathsf{d}}_{\lef,\righ}(\hbar)&=-\hbar^4\int_{0}^{T  } \widehat{a}^{-1}\left\{\phi_{\hbar, \lef}(\partial_{\sigma}\phi_{\hbar, \righ})-\phi_{\hbar, \righ}(\partial_{\sigma}\phi_{\hbar, \lef})\right\}(-L, \tau)d\tau\,.
\end{align*}
Using Propositions~\ref{prop.WKB} and \ref{prop:WKB=gs}, and the fact that $\phi_{\hbar,\lef}(\sigma,\tau)=\phi_{\hbar,\righ}(-\sigma,\tau)$, we write,
\[w^{\mathsf{u}}_{\lef,\righ}(\hbar)=
\Big(2\hbar^{5/2}|\xi_{0,\lef}(0)|^2\Phi_{\lef}'(0)+\mathcal O(\hbar^{7/2})\Big)\exp\left(\frac{-\mathsf{S}_{\mathsf{u}}}{\hbar}\right)\,.\]
In the same way, we find,
\[w^{\mathsf{d}}_{\lef,\righ}(\hbar)=
\Big(-\hbar^{5/2}\xi_{0,\lef}(-L)\xi_{0,\lef}(L)\big(\Phi'_{\lef}(L)+\Phi'_{\lef}(-L)\big)+\mathcal O(\hbar^{7/2})\Big)\exp\left(\frac{-\mathsf{S}_{\mathsf{d}}}{\hbar}\right)\,.\]
The  computation of $w^{\mathsf{d}}_{\lef,\righ}$ is easy by using the expressions of $\xi_{0,\lef}$ and $\Phi_{\lef}$ in Proposition~\ref{prop.WKB}. In this way, we get,
$$w^{\mathsf{d}}_{\lef,\righ}=\left[-2\hbar^{5/2}\left(\frac{\gamma}{\pi}\right)^{1/2}\sqrt{\mathfrak{v}(L)}\,\exp\left(-\int_{s_{\lef}}^L\frac{(\sqrt{\mathfrak v})'-\gamma}{\sqrt{\mathfrak v}}\,d\sigma\right)+\mathcal O(\hbar^{7/2})\right]\exp\left(\frac{-\mathsf{S}_{\mathsf d}}{\hbar}\right)\,.$$
Note that $\mathfrak v(-L)=\mathfrak v(L)$ by periodicity. In other words, we are saying that $s=\pm L$ defines the same point on the boundary, see Figure~1.

To compute $w^{\mathsf{d}}_{\lef,\righ}$ we use the two symmetry properties $\xi_{0,\lef}(\sigma)=\xi_{0,\righ}(-\sigma)$ and $\Phi_{\lef}(\sigma)=\Phi_{\righ}(-\sigma)$ and the expresions of $\xi_{0,\righ}$ and $\Phi_{\righ}$ in Proposition~\ref{prop.WKB}. We obtain,
$$w^{\mathsf{u}}_{\lef,\righ}=\left[-2\hbar^{5/2}\left(\frac{\gamma}{\pi}\right)^{1/2}\sqrt{\mathfrak{v}(0)}\,\exp\left(-\int_{s_{\righ}}^0\frac{(\sqrt{\mathfrak v})'+\gamma}{\sqrt{\mathfrak v}}\,d\sigma \right)+\mathcal O(\hbar^{7/2})\right]\exp\left(\frac{-\mathsf{S}_{\mathsf u}}{\hbar}\right)\,.$$
By adding the expressions of $w^{\mathsf{u}}$ and $w^{\mathsf{d}}$, we get an expression consistent with the one in \eqref{formspl1}. Recalling that $\hbar=h^{1/4}$, we finish the proof of \eqref{theo.main} by using Propositions \ref{prop:red-bnd} and \ref{prop.splittingLhat}.

\section{A Weyl formula}\label{subsec.BO}
This section is devoted to the proof of the following theorem (see \cite{Fr, Ka-rmp} for similar results for the Schr\"odinger operator with magnetic fields).
\subsection{Main result}
For $\lambda\in\R$, we denote by
\[\mathsf{N}\left(\mathcal{L}_{h},\lambda\right)={\rm Tr}\Big(\mathcal L_h\,\mathbf 1_{(-\infty,\lambda]}(\mathcal L_h)\Big)\,,\]
the number of eigenvalues $\mu_n(h)$ of $\mathcal L_h$ below the energy level $\lambda$.

\begin{theorem}\label{prop.weyl}
Under Assumption \ref{hyp.main1}, we have
\begin{enumerate}[\rm i.]
\item  the Weyl estimate of the semiclassically negative eigenvalues: 
\begin{equation}\label{eq.Weyl*}
\forall \Lambda\in(0,1)\,,\, \mathsf{N}\left(\mathcal{L}_{h},-\Lambda h\right)\underset{h\to 0}=
\frac{|\Gamma|\sqrt{1-\Lambda}}{\pi h^{\frac12}}+\mathcal O(1)\,;
\end{equation}
\item  the Weyl estimate of the low lying eigenvalues: 
\begin{equation}\label{eq.Weyl}
\forall E \in \mathbb R\,,\, \mathsf{N}\left(\mathcal{L}_{h},-h+Eh^{\frac{3}{2}}\right)\underset{h\to 0}{\sim}\frac{1}{\pi h^{\frac{1}{4}}}\int_{\Gamma}\sqrt{(E+\kappa)_{+}}\,\, ds(x)\,.
\end{equation}
\end{enumerate}
 \end{theorem}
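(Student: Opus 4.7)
The plan is to deduce both parts of Theorem~\ref{prop.weyl} from the classical one-dimensional semiclassical Weyl formula applied to the effective Hamiltonian $\mathcal{M}^{\mathsf{eff}}_h$ introduced in~\eqref{hameff}. The core step is a Born--Oppenheimer reduction that identifies $\mathsf N(\mathcal L_h, \lambda)$ with $\mathsf N(\mathcal{M}^{\mathsf{eff}}_h, \lambda)$ modulo a bounded error, uniformly for $\lambda \le -\epsilon_0 h$ with some fixed $\epsilon_0 \in (0,1)$.

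First I would combine Theorem~\ref{thm:dec} with Proposition~\ref{prop:red-bnd} to replace $\mathcal L_h$ by the tubular operator $\widetilde{\mathcal L}_h^{\{\delta\}}$ on $\widetilde{\mathcal V}_\delta$, with $\delta=Dh^{1/4}$: every eigenvalue counted in either~\eqref{eq.Weyl*} or~\eqref{eq.Weyl} lies below $-\epsilon_0 h$, so~\eqref{complem} introduces only an exponentially small correction per eigenvalue, harmless at the scales $h^{-1/2}$ and $h^{-1/4}$ considered. After rescaling to $\widehat{\mathcal L}_\hbar$ (cf.~\eqref{eq.Lh-hat}), the transverse fiber operator at each fixed $\sigma$ is exactly the model $\mathcal H_B^{\{T\}}$ of Section~\ref{section4}, with $B=h^{1/2}\kappa(\sigma)$ and $T=Dh^{-1/4}$. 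Proposition~\ref{lem:H0b;l} and Lemma~\ref{lem:H0b} then yield, uniformly in $\sigma$, a ground-state energy $\lambda_1(\mathcal H_B^{\{T\}})=-1-h^{1/2}\kappa(\sigma)+\mathcal O(h)$ and a transverse spectral gap of size $1+o(1)$.

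Projecting each quadratic form on the transverse ground state $u_B^{\{T\}}$ and using the min-max principle in the spirit of~\cite[Ch.~13]{Ray}, a standard Feshbach-type argument gives
\[
\mathsf N(\mathcal L_h,\lambda)=\mathsf N(\mathcal{M}^{\mathsf{eff}}_h,\lambda)+\mathcal O(1)\,,
\]
uniformly for $\lambda\leq -\epsilon_0 h$. The orthogonal complement of the ground-state channel lies above $-Ch^{1/4}$, so it contributes only finitely many eigenvalues in the relevant window; the potential induced on the ground-state channel is $\mathcal{M}^{\mathsf{eff}}_h$ up to remainders of order $h^{7/4}$ which are absorbed in the $\mathcal O(1)$.

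It remains to apply the classical one-dimensional Weyl formula on the circle of length $|\Gamma|$ to $\mathcal{M}^{\mathsf{eff}}_h=h^2 D_s^2 + V(s)$ with $V(s)=-h-h^{3/2}\kappa(s)$:
\[
\mathsf N(\mathcal{M}^{\mathsf{eff}}_h,\lambda)=\frac{1}{\pi h}\int_\Gamma \sqrt{(\lambda-V(s))_+}\,ds + \mathcal O(1)\,.
\]
For $\lambda=-\Lambda h$ with $\Lambda\in(0,1)$ the integrand equals $\sqrt{h}\,\sqrt{(1-\Lambda)+h^{1/2}\kappa(s)}$, yielding~\eqref{eq.Weyl*} after expanding the square root. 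For $\lambda=-h+Eh^{3/2}$ the integrand collapses to $h^{3/4}\sqrt{(E+\kappa(s))_+}$, which gives~\eqref{eq.Weyl} since the $\mathcal O(1)$ remainder is negligible against the $h^{-1/4}$ leading order. The main obstacle is the third paragraph: the Born--Oppenheimer reduction is known for isolated low-lying eigenvalues (Theorem~\ref{thm:old1}), but extending it to a uniform control of the counting function over a macroscopic range of energies requires a quadratic-form bracketing that carefully exploits the transverse spectral gap from Section~\ref{section4}; once this bracketing is in place, the application of the 1D Weyl law is classical.
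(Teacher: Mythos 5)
Your overall strategy is the same as the paper's: localize near the boundary (Theorem~\ref{thm:dec}, Proposition~\ref{prop:red-bnd}), rescale, project onto the transverse ground state of the fiber operator $\mathcal H^{\{T\}}_B$ with $B=h^{1/2}\kappa(\sigma)$, bracket $\mathcal L_h$ by effective one-dimensional Hamiltonians via min-max, and invoke the classical 1D Weyl law. You also correctly flag that the hard part is making the Born--Oppenheimer reduction uniform across a macroscopic energy window rather than for a fixed number of eigenvalues. However, there is a concrete gap hidden behind your phrase \enquote{a standard Feshbach-type argument}: you never identify, let alone estimate, the Born--Oppenheimer correction
\[
R_\hbar(\sigma)=\big\|\partial_\sigma v_{\kappa(\sigma),\hbar}\big\|^2_{L^2((0,T),\widehat a\,d\tau)}\,,
\]
which enters the projected quadratic form as an extra effective potential $\hbar^4 R_\hbar(\sigma)$. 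In the original scaling this is a term of size $h\,R_\hbar(\sigma)$. If one only knows $R_\hbar=\mathcal{O}(1)$ or even $\mathcal{O}(\hbar^2)$, this contributes $\mathcal{O}(h)$ or $\mathcal{O}(h^{3/2})$ to the effective potential, which is comparable to or larger than the curvature term $-\kappa(\sigma)h^{3/2}$ and would destroy both \eqref{eq.Weyl*} and \eqref{eq.Weyl}. The paper therefore devotes Lemma~\ref{lem.h12} (a Feynman--Hellmann computation combined with the $T$-uniform Sobolev embedding and the Agmon bound of Proposition~\ref{prop.AgmonuBT}, after the gauge change of Lemma~\ref{lem.analyticB}) to proving $\|\partial_B\tilde u_B^{\{T\}}\|_{L^2}\leq C$ uniformly in $T\geq T_0$ and $B$, which with $B=\hbar^2\kappa(\sigma)$ yields $R_\hbar(\sigma)=\mathcal{O}(\hbar^4)$; this is exactly what makes the effective operators $\mathcal L^{\mathsf{eff},\pm}_h$ of Proposition~\ref{prop.normalform} tight to order $h^2$. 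Without this estimate your bracketing has no content.

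Two further, smaller inaccuracies: your stated remainder \enquote{of order $h^{7/4}$} is not what one actually gets; the relevant remainders sit at order $h^2$ in the effective potential and as a relative $\mathcal{O}(h^{1/2})$ modification of the kinetic coefficient, precisely as encoded in $\mathcal L^{\mathsf{eff},\pm}_h$. And the paper does not directly assert $\mathsf N(\mathcal L_h,\lambda)=\mathsf N(\mathcal M^{\mathsf{eff}}_h,\lambda)+\mathcal{O}(1)$: it brackets eigenvalue-by-eigenvalue between $\mathcal{L}^{\mathsf{eff},-}_h$ and $\mathcal{L}^{\mathsf{eff},+}_h$, and applies the 1D Weyl law to each of those; the comparison with $\mathcal M^{\mathsf{eff}}_h$, and the absorption of the coefficient perturbations into the error, is then done at the level of the 1D phase-space integrals when proving each of \eqref{eq.Weyl*} and \eqref{eq.Weyl} separately. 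This is slightly weaker than your claim but is what is actually established, and it suffices. Finally, one should also note the technical step of replacing $\widehat a^{-2}$ by $1$ (Lemma~\ref{lem.ahat1}), which again consumes one power of $\hbar$ and is absorbed only because $R_\hbar=\mathcal{O}(\hbar^4)$: this is another place where the explicit control of $R_\hbar$ is indispensable and your sketch silently assumes it.
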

 
The proof of Theorem~\ref{prop.weyl} relies on a comparison with an effective Hamiltonian (see Proposition~\ref{prop.normalform} below). 
\begin{remark}
The counting of eigenvalues for the Robin problem appears (at least in the case of the disk) in the thesis of A. Stern \cite{St} in 1925 but note that the author (who refers to the book by Pockels \cite{Po91} (written at the end of the nineteen-th century) is only counting the total number of negative eigenvalues. In this case, this is directly related to the counting function for the Dirichlet-to-Neumann operator. We refer to \cite{GiPo} for a recent survey on these questions.
\end{remark}

\subsection{More about the Robin 1D-Laplacian}
This subsection contains one key element in the proof of Theorem~\ref{prop.weyl} obtained through an additional analysis of the weighted operator in \eqref{eq:H0b} and its groundstate. Note that the analysis of this operator  is equivalent to that of the operator $\widetilde{\mathcal{H}}_{B}^{\{T\}}$ defined in \eqref{eq.tildeH}. Recall that the operator $\widetilde{\mathcal{H}}_{B}^{\{T\}}$ is defined in the interval  $(0,T)$ and that its ground state $\tilde u_{B}^{\{T\}}$ is given by the relation
$$\tilde u_{B}^{\{T\}}=(1-B\tau)^{\frac{1}{2}}u_{B}^{\{T\}}$$
where $ u_{B}^{\{T\}}$ is the groundstate of the operator in \eqref{eq:H0b}. Since $ \tilde u_{B}^{\{T\}}$ is normalized in $L^2(0,T)$,
\begin{equation}\label{remorth}
 \int _0^T \partial_{B}  \tilde u_{B}^{\{T\}}\, \tilde u_{B}^{\{T\}}\,d\tau =0\,.
 \end{equation}

\noindent For further use, we would like to estimate $\|\partial_{B} \tilde u_{B}^{\{T\}}\|_{L^2((0,T), d\tau)}$, uniformly with respect to $B$ and $T$.

\begin{lemma}\label{lem.h12}
There exist $C>0$  and $T_0 >0$ such that for all $T\geq T_0$ and $B\in(-1/(3T), 1/(3T))$,
\begin{align}
\left|\partial_{B}\lambda_{1}\left(\widetilde{\mathcal{H}}_{B}^{\{T\}}\right)\right|&\leq C\label{FHb}\,,\\
\|\partial_{B} \tilde u_{B}^{\{T\}}\|_{L^2((0,T),d\tau)}&\leq C\label{FHc}\,.
\end{align}
\end{lemma}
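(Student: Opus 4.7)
The plan is to prove the two bounds \eqref{FHb} and \eqref{FHc} in sequence, exploiting the variational characterization of $\lambda_1(\widetilde{\mathcal{H}}_B^{\{T\}})$ together with the uniform spectral gap of the operator $\widetilde{\mathcal{H}}_B^{\{T\}}$.

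For \eqref{FHb}, I would apply a Feynman--Hellmann (envelope) argument to the analytic family from Lemma~\ref{lem.analyticB}. Since $\tilde u_B^{\{T\}}$ minimizes $\tilde q_B^{\{T\}}$ under the $L^2(0,T)$ normalization, only the explicit $B$-dependence of the coefficients of the form \eqref{eq.qtilde} contributes, giving
\[
\partial_B\lambda_1\!\left(\widetilde{\mathcal H}_B^{\{T\}}\right)=-\int_0^T\partial_B\!\left(\frac{B^2}{4(1-B\tau)^2}\right)|\tilde u_B^{\{T\}}|^2\,d\tau-\tfrac12|\tilde u_B^{\{T\}}(0)|^2.
\]
Since $|B\tau|\le 1/3$, the integrand is $O(|B|)$ and, using $|B|\le 1/(3T)$ and $\|\tilde u_B^{\{T\}}\|_{L^2}=1$, the first term is $O(|B|)$. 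A uniform bound on the boundary term reduces to bounding $|\tilde u_B^{\{T\}}(0)|$: combine the trace inequality $|\psi(0)|^2\le\eta\|\partial_\tau\psi\|_{L^2}^2+C\eta^{-1}\|\psi\|_{L^2}^2$ with the energy identity $\tilde q_B^{\{T\}}(\tilde u_B^{\{T\}})=\lambda_1(B)=-1+O(|B|)$ from Proposition~\ref{lem:H0b;l}; choosing $\eta$ small absorbs the $H^1$-seminorm and yields $|\tilde u_B^{\{T\}}(0)|^2\le C$ uniformly.

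For \eqref{FHc}, let $v_B:=\partial_B\tilde u_B^{\{T\}}$. Differentiating $\widetilde{\mathcal H}_B^{\{T\}}\tilde u_B^{\{T\}}=\lambda_1(B)\tilde u_B^{\{T\}}$ in $B$ gives
\[
\left(\widetilde{\mathcal H}_B^{\{T\}}-\lambda_1(B)\right)v_B=\left[\lambda_1'(B)+\partial_B\!\left(\tfrac{B^2}{4(1-B\tau)^2}\right)\right]\tilde u_B^{\{T\}}=:f_B,
\]
with inhomogeneous Robin datum $v_B'(0)=-\tfrac12\tilde u_B^{\{T\}}(0)+(-1-B/2)v_B(0)$ at $\tau=0$ and $v_B(T)=0$. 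By \eqref{FHb}, $\|f_B\|_{L^2}\le C$. To restore a homogeneous boundary condition, fix $\chi\in C_c^\infty([0,\infty))$ with $\chi(0)=1$, $\chi'(0)=0$, $\supp\chi\subset[0,1]$, and set $\phi:=\alpha\chi$ with $\alpha=-\tilde u_B^{\{T\}}(0)/(2(1+B/2))$, so that $w_B:=v_B-\phi$ satisfies the same Robin-Dirichlet conditions as $\tilde u_B^{\{T\}}$ and solves $(\widetilde{\mathcal H}_B^{\{T\}}-\lambda_1(B))w_B=f_B-(\widetilde{\mathcal H}_B^{\{T\}}-\lambda_1(B))\phi=:g_B$ with $\|g_B\|_{L^2}\le C$.

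Decompose $w_B=c_1\tilde u_B^{\{T\}}+w_B^\perp$ with $w_B^\perp\perp\tilde u_B^{\{T\}}$ in $L^2$. Pairing the equation with $w_B^\perp$ and using that, by Lemmas~\ref{lem:1DL} and~\ref{lem:H0b}, the gap $\lambda_2(\widetilde{\mathcal H}_B^{\{T\}})-\lambda_1(\widetilde{\mathcal H}_B^{\{T\}})$ is bounded below by some $c_0>0$ uniformly in $T\ge T_0$ and $B\in(-1/(3T),1/(3T))$, I obtain $\|w_B^\perp\|_{L^2}\le c_0^{-1}\|g_B\|_{L^2}\le C$. The coefficient $c_1=\langle w_B,\tilde u_B^{\{T\}}\rangle=-\langle\phi,\tilde u_B^{\{T\}}\rangle$ uses \eqref{remorth}, i.e.\ the orthogonality $\langle v_B,\tilde u_B^{\{T\}}\rangle=0$ coming from the $L^2$-normalization; since $\|\phi\|_{L^2}$ is uniformly bounded, so is $|c_1|$. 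Adding the bounds gives $\|v_B\|_{L^2}\le C$.

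The main obstacle is the $B$-dependence of the boundary condition, which prevents a direct application of Feynman--Hellmann calculus or of the reduced resolvent on the orthogonal complement of $\tilde u_B^{\{T\}}$. The remedy is the $\phi$-correction above, which costs only a bounded $L^2$-term because $|\tilde u_B^{\{T\}}(0)|$ is controlled. A secondary point requiring care is the uniform-in-$T$ spectral gap: although $\lambda_2(\mathcal H_0^{\{T\}})\to 0^+$ as $T\to\infty$ while $\lambda_1(\mathcal H_0^{\{T\}})\to -1$, Lemma~\ref{lem:H0b} shows the perturbation from $B$ is $O(|B|T)=O(1)$ with a small constant, so the gap remains bounded below.
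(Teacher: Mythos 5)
Your proof is correct, and the two estimates split naturally into ``same as the paper'' and ``genuinely different''. For \eqref{FHb} you arrive at exactly the paper's Feynman--Hellmann identity \eqref{eq.FH+boundary}; your control of the integral term by the pointwise bound $\partial_B\bigl(B^2/(4(1-B\tau)^2)\bigr)=\mathcal O(|B|)$ (valid uniformly on $(0,T)$ since $|B\tau|\le 1/3$) is in fact slightly more economical than the paper, which invokes the Agmon decay of Proposition~\ref{prop.AgmonuBT}; the trace bound on $|\tilde u_B^{\{T\}}(0)|$ via the $\eta$-trace inequality and the energy identity is the same mechanism as the paper's uniform Sobolev embedding plus Proposition~\ref{lem:H0b;l}. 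For \eqref{FHc} the routes diverge: the paper stays entirely at the level of the differentiated weak identity, tests it against $\partial_B\tilde u_B^{\{T\}}$, and then runs a small bootstrap ($L^2$ bound in terms of $|\partial_B\tilde u_B^{\{T\}}(0)|$, then an $H^1$ bound, then Sobolev to close the loop), precisely because $\partial_B\tilde u_B^{\{T\}}$ satisfies an \emph{inhomogeneous} Robin condition and so is not in the operator domain. You instead differentiate the strong eigenvalue equation, remove the inhomogeneous Robin datum by subtracting the explicit compactly supported lift $\phi=\alpha\chi$, and invert $\widetilde{\mathcal H}_B^{\{T\}}-\lambda_1$ on $\{\tilde u_B^{\{T\}}\}^{\perp}$ via the spectral gap, using \eqref{remorth} to control the remaining coefficient. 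This buys you a cleaner conclusion with no boundary-term bootstrap, at the cost of needing $\partial_B\tilde u_B^{\{T\}}\in H^2$ and the differentiated equation in the strong sense -- which does follow from the analyticity in Lemma~\ref{lem.analyticB} together with elliptic regularity, and is implicitly what the paper's footnote on the boundary condition of $\partial_B\tilde u_B^{\{T\}}$ records. Both arguments rest on the same uniform spectral gap deduced from Lemmas~\ref{lem:1DL} and~\ref{lem:H0b}; you are right to flag that this gap is the delicate point (the perturbation is only $\mathcal O(|B|T)$ with an unspecified constant), but your treatment of it is no less rigorous than the paper's own.
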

\begin{proof}
We recall that $\tilde q^{\{T\}}_{B}$ is defined in \eqref{eq.qtilde} (the associated bilinear form is denoted in the same way). From the eigenvalue equation we get, for all $\varphi\in H^1(0,T)$,
\begin{equation}\label{eq.eigen-var}
\tilde q^{\{T\}}_{B}(\tilde u_{B}^{\{T\}},\varphi)-\lambda_{1}\langle\tilde u_{B}^{\{T\}},\varphi\rangle=0\,.
\end{equation}
Then, we take the derivative with respect to $B$ and we get
\begin{equation}\label{eq.derivB}
\tilde q^{\{T\}}_{B}(\partial_{B}\tilde u_{B}^{\{T\}},\varphi)-\lambda_{1}\langle\partial_{B}\tilde u_{B}^{\{T\}},\varphi\rangle=\partial_{B}\lambda_{1}\langle\tilde u_{B}^{\{T\}},\varphi\rangle-\partial_{B}\tilde q^{\{T\}}_{B}(\tilde u_{B}^{\{T\}},\varphi)\,.
\end{equation}
We take $\varphi=\tilde u_{B}^{\{T\}}$ and the l.h.s. in \eqref{eq.derivB} vanishes (use $\varphi=\partial_{B}\tilde u_{B}^{\{T\}}$ in \eqref{eq.eigen-var}\footnote{Note that $\partial_{B}  \tilde u_{B}^{\{T\}}$ belongs to the form domain, but not to the domain of the operator. We have indeed
\begin{equation*}\label{eq.reste-IBP}
\left(\partial_B\tilde u_{B}^{\{T\}}\right)'(0) =\partial_B\left(\tilde u_{B}^{\{T\}}\right)'(0)=\left(-1-\frac{B}{2}\right)\partial_{B}\tilde u_{B}^{\{T\}}(0)-\frac{\tilde u_{B}^{\{T\}}(0)}{2}\,.
\end{equation*}}). Then, the r.h.s. vanishes and we deduce the Feynman-Hellmann formula
\begin{equation}\label{eq.FH+boundary}
\partial_{B}\lambda_{1}\left(\widetilde{\mathcal{H}}_{B}^{\{T\}}\right)=-\int_{0}^{T}\partial_{B}\left(\frac{B^2}{4(1-B\tau)^2}\right)  \tilde u_{B}^{\{T\}}\,\tilde u_{B}^{\{T\}}d\tau-\frac{1}{2}|\tilde u^{\{T\}}_{B}(0)|^2\,.
\end{equation}
A $T$-uniform continuous Sobolev embedding (for $T\geq 1$) and Proposition \ref{lem:H0b;l} give
\begin{equation}\label{eq.boundu(0)}
|\tilde u^{\{T\}}_{B}(0)|\leq C\, \|\tilde u^{\{T\}}_{B}\|_{H^1(0,T) }\leq\tilde C\,.
\end{equation}
Therefore \eqref{FHb} holds thanks to \eqref{eq.FH+boundary}, Proposition \ref{prop.AgmonuBT} and \eqref{eq.boundu(0)}.

Now we take $\varphi=\partial_{B}  \tilde u_{B}^{\{T\}}$ in \eqref{eq.derivB} and we find with the same considerations (and \eqref{remorth}):
\begin{align*}
\tilde q^{\{T\}}_{B}(\partial_B\tilde u_{B}^{\{T\}})-\lambda_{1}\left(\widetilde{\mathcal{H}}_{B}^{\{T\}}\right)\|\partial_{B}\tilde u_{B}^{\{T\}}\|^2_{L^2}&\leq C(\|\partial_{B}\tilde u_{B}^{\{T\}}\|_{L^2}+|\tilde u_{B}^{\{T\}}(0)||\partial_{B}\tilde u_{B}^{\{T\}}(0)|)\\
&\leq  C\|\partial_{B}\tilde u_{B}^{\{T\}}\|_{L^2}+\tilde C|\partial_{B}\tilde u_{B}^{\{T\}}(0)|\,.
\end{align*}
With the spectral gap (see Lemmas \ref{lem:1DL} and
\ref{lem:H0b}) together with   \eqref{remorth}, we get,
\begin{equation}\label{eq.gap'}
\|\partial_{B}\tilde u_{B}^{\{T\}}\|^2_{L^2}\leq C+C\, |\partial_{B}\tilde u_{B}^{\{T\}}(0)|\,,
\end{equation}
and thus
\[\tilde q^{\{T\}}_{B}(\partial_B\tilde u_{B}^{\{T\}})\leq  C+C\, |\partial_{B}\tilde u_{B}^{\{T\}}(0)|\,.\]
From this  we deduce
\[\|\partial_{B}\tilde u_{B}^{\{T\}}\|_{H^1(0,T)}\leq C+C\, \|\partial_{B}\tilde u_{B}^{\{T\}}\|_{L^2(0,T)}+C\, |\partial_{B}\tilde u_{B}^{\{T\}}(0)|^{\frac{1}{2}} \]
and,  by Sobolev embeddings, 
\[|\partial_{B}\tilde u_{B}^{\{T\}}(0)|\leq \widehat C+ \widehat C\, \|\partial_{B}\tilde u_{B}^{\{T\}}\|_{L^2(0,T)}\,.\]
The estimate \eqref{FHc} follows from \eqref{eq.gap'}.
\end{proof}

\subsection{Proof of the Weyl formulas}
Thanks to the min-max principle  and the usual Weyl formula in dimension one for the operator on the circle $h^\frac 12 D_\sigma^2- \kappa(\sigma)$   (use a direct comparison with the case with constant potential for \rm{(i)} and use for example  \cite{Ro87} for the case \rm{(ii)}\,), Theorem~\ref{prop.weyl} is a consequence of the following proposition which permits to localize the eigenvalues $\mu_n$ of $\mathcal L_h$ by comparison with effective Hamiltonians.

\begin{proposition}\label{prop.normalform}
Under Assumption \ref{hyp.main1},  for $\epsilon_{0}\in(0,1)$, $h>0$, we let\[\mathcal{N}_{\epsilon_{0}, h}=\{n\in\mathbb{N}^* : \mu_{n}(h)\leq-\epsilon_{0}h\}\,.\]
There exist positive constants $h_{0}, C_{+}, C_{-}$ such that, for all $h\in(0,h_{0})$ and $n\in\mathcal{N}_{\epsilon_{0}, h}$,
 \begin{equation}\label{eq:App.eh}
\mu^-_{n}(h)\leq\mu_{n}(h)\leq \mu^{+}_{n}(h)\,,\end{equation}
where $\mu^{\pm}_{n}(h)$ is the $n$-th eigenvalue of $\mathcal{L}^{\mathsf{eff}, \pm}_{h}$ defined by
 \[\mathcal{L}^{\mathsf{eff},+}_{h}=-h+(1+C_{+}h^{\frac{1}{2}})h^2D_{\sigma}^2-\kappa(\sigma)h^{\frac{3}{2}}+C_{+}h^2\,,\]
and
 \[\mathcal{L}^{\mathsf{eff},-}_{h}=-h+(1-C_{-}h^{\frac{1}{2}})h^2 D_{\sigma}^2-\kappa(\sigma)h^{\frac{3}{2}}-C_{-}h^{2}\,.\]
 \end{proposition}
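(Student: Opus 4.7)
The plan is to perform a Born--Oppenheimer reduction to the boundary, using the fiber ground state of $\mathcal H^{\{T\}}_{B}$ at each point $\sigma\in\Gamma$ with $B=h^{1/2}\kappa(\sigma)$. First, by the Agmon estimates (Theorem \ref{thm:dec}) and Proposition \ref{prop:red-bnd}, we may replace $\mathcal L_h$ by the tubular operator $\widetilde{\mathcal L}_h^{\{\delta\}}$ with $\delta = D h^\rho$ ($\rho<1/2$), at the cost of an exponentially small error that is harmless for eigenvalues at the scale $-\epsilon_0 h$. After rescaling $t = h^{1/2}\tau$, the quadratic form in the fiber variable $\tau\in(0,T)$, $T=h^{\rho-1/2}$, coincides (up to the factor $h$) with that of $\mathcal H^{\{T\}}_{B}$; in particular, by Proposition \ref{lem:H0b;l},
\[
h\,\lambda_1(\mathcal H^{\{T\}}_{h^{1/2}\kappa(\sigma)}) = -h-\kappa(\sigma) h^{3/2} + \mathcal{O}(h^2)
\]
uniformly on $\Gamma$. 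This is the source of the potential $-\kappa(\sigma)h^{3/2}$ in $\mathcal L^{\mathsf{eff},\pm}_h$.

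\textbf{Upper bound.} For the bound $\mu_n(h)\le\mu_n^+(h)$ I will use min--max with the test space
\[
\mathcal F_n = \mathrm{span}\bigl\{\,\sigma\mapsto f_j(\sigma)\, u_{B(\sigma)}^{\{T\}}(h^{-1/2}t)\,:\,j=1,\dots,n\bigr\},
\]
where $f_1,\dots,f_n$ are the first $n$ eigenfunctions of $\mathcal L^{\mathsf{eff},+}_h$ on $\mathbb R/(2L)\mathbb Z$, multiplied by a smooth cut-off in $t$ (or equivalently $\tau$) supported in $(0,T)$. After expanding $\widetilde{\mathcal Q}_h^{\{\delta\}}$ on such a product, the $t$-integration yields the fiber eigenvalue $h\lambda_1(\mathcal H^{\{T\}}_B)$ with the weight $\widehat a$; the tangential part produces $h^2\int|\partial_\sigma f|^2\,d\sigma$ up to multiplicative corrections $1+\mathcal O(h^{1/2})$ coming from $\widehat a^{-2}-1=\mathcal O(h^{1/2})$, plus cross terms involving $\partial_\sigma u_{B(\sigma)}^{\{T\}} = \kappa'(\sigma)h^{1/2}\partial_B u_B^{\{T\}}$. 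By Lemma \ref{lem.h12} these cross terms are $\mathcal O(h^{1/2})$ in operator norm relative to the tangential kinetic form, and the $O(h^2)$ corrections from the two-term expansion of $\lambda_1(\mathcal H^{\{T\}}_B)$ absorb into the constant $C_+ h^2$. This gives, for some $C_+>0$ and every $u\in\mathcal F_n$,
\[
\widetilde{\mathcal Q}_h^{\{\delta\}}(u) \le \langle \mathcal L^{\mathsf{eff},+}_h f,f\rangle_{L^2(\Gamma)}\,\|u\|^2,
\]
whence the min-max principle yields $\mu_n(h)\le \mu_n^+(h)$.

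\textbf{Lower bound.} For $\mu_n(h)\ge \mu_n^-(h)$ on the indices $n\in\mathcal N_{\epsilon_0,h}$, I will use the fiberwise orthogonal decomposition
\[
u(\sigma,\tau) = f(\sigma)\,u_{B(\sigma)}^{\{T\}}(\tau) + r(\sigma,\tau),\qquad f(\sigma) = \int_0^T u(\sigma,\tau)\,u_{B(\sigma)}^{\{T\}}(\tau)\,\widehat a\,d\tau,
\]
so that $r(\sigma,\cdot)\perp u_{B(\sigma)}^{\{T\}}$ in $L^2((0,T);\widehat a\,d\tau)$ for every $\sigma$. Applied inside the rescaled quadratic form $\widehat{\mathcal Q}_\hbar$ (cf. \eqref{eq:dom-Lh-hat}), this decomposition splits as follows: the fiber quadratic form contributes
\[
\int\!\!\int\bigl(|\partial_\tau u|^2-|u(\sigma,0)|^2\bigr)\,\widehat a\,d\sigma d\tau \,\ge\, \int \lambda_1(\mathcal H^{\{T\}}_B)\,|f(\sigma)|^2 d\sigma + (\lambda_2-\lambda_1)\,\|r\|^2,
\]
where the spectral gap is bounded below by a positive constant thanks to Lemmas \ref{lem:1DL}--\ref{lem:H0b}. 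The tangential form $\hbar^4\int\widehat a^{-2}|\partial_\sigma u|^2\widehat a$, after writing $\partial_\sigma(f u_B^{\{T\}})=f'u_B^{\{T\}} + f\,\kappa'(\sigma)h^{1/2}\partial_B u_B^{\{T\}}$, produces $h^2\int|f'|^2 d\sigma$ modulated by $1-C_- h^{1/2}$ (from $\widehat a^{-2}$ and the cross terms controlled by \eqref{FHc}); the cross terms between $fu_B^{\{T\}}$ and $r$ in the tangential form are handled via Cauchy--Schwarz, using the positive gap on $r$ to absorb them. The remaining $\mathcal O(h^2)$ errors, uniform in $\sigma$, are collected in the additive constant $-C_- h^2$. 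Returning to the unscaled $h$, we obtain
\[
\widetilde{\mathcal Q}_h^{\{\delta\}}(u) \ge \langle \mathcal L^{\mathsf{eff},-}_h f,f\rangle_{L^2(\Gamma)} + c\,h\|r\|^2,
\]
and the min--max principle together with the dimension-$n$ spectral subspace associated with eigenvalues $\le -\epsilon_0 h$ yields the lower bound.

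\textbf{Main obstacle.} The delicate point is the control of the $\sigma$-derivatives of the slowly varying fiber ground state $u_{B(\sigma)}^{\{T\}}$, which are needed both in the test-function construction for the upper bound and in the cross-term estimate for the lower bound. This is precisely what Lemma \ref{lem.h12} provides: the bound $\|\partial_B\tilde u_B^{\{T\}}\|_{L^2}\le C$ uniform in $B$ and $T$ (in the relevant range) ensures that these derivatives contribute only an $\mathcal O(h^{1/2})$ perturbation of the tangential kinetic form and an $\mathcal O(h^2)$ perturbation of the pointwise fiber eigenvalue, which is exactly what is needed to close the two-sided estimate and then, via the usual one-dimensional Weyl formula applied to $\mathcal L^{\mathsf{eff},\pm}_h$, to derive both asymptotics \eqref{eq.Weyl*} and \eqref{eq.Weyl} of Theorem \ref{prop.weyl}.
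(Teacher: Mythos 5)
Your proposal follows essentially the same route as the paper: a Born--Oppenheimer reduction via the fiberwise projection onto the ground state of $\mathcal H^{\{T\}}_{B(\sigma)}$, the two-term expansion of $\lambda_1(\mathcal H^{\{T\}}_{B})$ from Proposition \ref{lem:H0b;l}, the spectral gap on the orthogonal complement, and Lemma \ref{lem.h12} to control the Born--Oppenheimer correction $\|\partial_\sigma u^{\{T\}}_{B(\sigma)}\|^2$. The only imprecision is the claim $\widehat a^{-2}-1=\mathcal O(h^{1/2})$: since $\tau$ ranges up to $T=D\hbar^{-1}$ this bound is only $\mathcal O(h^{1/4})$ pointwise, and becomes $\mathcal O(h^{1/2})$ after integration against the Agmon-decaying fiber ground state (Proposition \ref{prop.AgmonuBT}), which is exactly how the paper's Lemma \ref{lem.ahat1} splits the error between the projected and orthogonal parts.
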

\subsection{Proof of Proposition  \ref{prop.normalform}}
The proof will be done in three steps.
\subsubsection{Preliminary considerations}
Thanks to the Agmon estimates established in Section \ref{sec.tubular}, it is sufficient to work with $\widehat{\mathcal{L}}_{\hbar}$. As suggested by the proof of Lemma \ref{lem:lb-qf}, the spectral analysis of $\widehat{\mathcal L}_\hbar$ may be done with the Born-Oppenheimer strategy. Let us recall the expression of the quadratic form $\widehat{\mathcal Q}_{\hbar}$, defined in  \eqref{eq:dom-Lh-hat},
\[\widehat{\mathcal Q}_{\hbar}(\psi)=\int_{-L}^L \int_{0}^{T}\widehat a^{-2}\hbar^4|\partial_\sigma \psi|^2\,d\tau d\sigma+\int_{-L}^L \left\{\int_{0}^{T}|\partial_\tau \psi|^2\widehat a \, d\tau- |\psi(\sigma,0)|^2\right\}\,d\sigma\,,\]
 with $T=D\hbar^{-1}$.
We let also 
 $$
 \mathcal{H}_{\kappa(\sigma),\hbar} = \mathcal H^{\{T\}}_{B}\,,
 $$
 with $B=h^{\frac{1}{2}}\kappa(\sigma)=\hbar^2\kappa(\sigma)$.
 
We introduce for $\sigma \in [-L,L)$ the Feshbach projection $\Pi_\sigma$  on the normalized groundstate of  $ \mathcal{H}_{\kappa(\sigma),\hbar}$, denoted by $v_{\kappa(\sigma), \hbar}$,
\[
\Pi_\sigma  \psi=\langle\psi, v_{\kappa(\sigma), \hbar}\rangle_{L^2((0,T),\widehat a d\tau)} v_{\kappa(\sigma), \hbar}\,.
\]

We also let
\[\Pi_\sigma^\perp=\mathsf{Id}-\Pi_\sigma \]
and
\begin{equation}\label{defR}
\qquad R_{\hbar}(\sigma)=\|\partial_{\sigma} v_{\kappa(\sigma), \hbar}\|^2_{L^2((0,T),\,\widehat a d\tau)}\,.
\end{equation}
The quantity $R_{\hbar}$ is sometimes called \enquote{Born-Oppenheimer correction}.

\noindent To be reduced to classical considerations, the main point is to control the effect of replacing $\widehat{a}^{-2}$ by $1$. 
\begin{lemma}\label{lem.ahat1}
We have, for all $\psi\in\Dom(\widehat{\mathcal Q}_{\hbar})$,
\begin{align*}
&\left|\int_{\widehat{\mathcal V}_{T}} \widehat{a}^{-2}|\partial_{\sigma}\psi|^2\widehat{a} \, d\sigma d\tau-\int_{\widehat{\mathcal V}_{T}} |\partial_{\sigma}\psi|^2\widehat{a}\, d\sigma d\tau\right|\\
&\leq  \widetilde C \int_{\widehat{\mathcal V}_{T}} \hbar^{2} |f_\psi '(\sigma)|^2+\hbar R_{\hbar}(\sigma)|f_\psi(\sigma)|^2+ \hbar |\partial_{\sigma}\Pi_\sigma^\perp  \psi|^2 d\sigma d\tau\,.
\end{align*}
with
$$
f_\psi (\sigma):=\langle\psi (\sigma,\cdot) , v_{\kappa(\sigma), \hbar}\rangle_{L^2((0,T),\,\widehat a d\tau)}\,.
$$

\end{lemma}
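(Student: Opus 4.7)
First I would exploit the fact that $T = D\hbar^{-1}$ to obtain a uniform pointwise estimate on $\widehat a^{-2}-1$: since $\widehat a(\sigma,\tau) = 1 - \hbar^2 \tau \kappa(\sigma)$ and $0 \le \tau \le T$, we have $|\widehat a - 1| \le D|\kappa|_\infty \hbar$ on $\widehat{\mathcal V}_T$, so for $\hbar$ small enough $\widehat a$ stays in $[1/2,3/2]$ and $|\widehat a^{-2}-1| \le C_0 \hbar$. The left-hand side of the lemma is therefore directly controlled by
\[
C_0\hbar \int_{\widehat{\mathcal V}_T} |\partial_\sigma \psi|^2\, \widehat a\, d\sigma d\tau,
\]
and the remaining task is to split $\partial_\sigma \psi$ into the three pieces appearing on the right-hand side.

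To do this, I would use the Feshbach decomposition $\psi = \Pi_\sigma \psi + \Pi_\sigma^\perp \psi$. Since $\Pi_\sigma \psi(\sigma,\tau) = f_\psi(\sigma) v_{\kappa(\sigma),\hbar}(\tau)$, the chain rule gives $\partial_\sigma \Pi_\sigma \psi = f_\psi'(\sigma)\, v_{\kappa(\sigma),\hbar} + f_\psi(\sigma)\, \partial_\sigma v_{\kappa(\sigma),\hbar}$. Applying $|a+b|^2 \le 2|a|^2 + 2|b|^2$ twice, integrating in $\tau$ against $\widehat a\, d\tau$, and invoking both the normalization $\|v_{\kappa(\sigma),\hbar}\|^2_{L^2((0,T),\widehat a d\tau)}=1$ and the definition \eqref{defR} of $R_\hbar(\sigma)$, I obtain the pointwise-in-$\sigma$ inequality
\[
\int_0^T |\partial_\sigma \psi|^2\, \widehat a\, d\tau \le 4|f_\psi'(\sigma)|^2 + 4 R_\hbar(\sigma) |f_\psi(\sigma)|^2 + 2\int_0^T |\partial_\sigma \Pi_\sigma^\perp \psi|^2\, \widehat a\, d\tau.
\]
Integrating in $\sigma$ and using the trivial identity $\int_{-L}^L (\cdot)\, d\sigma = T^{-1} \int_{\widehat{\mathcal V}_T}(\cdot)\, d\sigma d\tau = (\hbar/D) \int_{\widehat{\mathcal V}_T}(\cdot)\, d\sigma d\tau$ valid for integrands independent of $\tau$, the prefactor rearranges as $C_0\hbar \cdot |f_\psi'|^2 \to (C_0/D)\, \hbar^2 |f_\psi'|^2$ and $C_0\hbar \cdot R_\hbar |f_\psi|^2 \to (C_0/D)\, \hbar \cdot \hbar R_\hbar |f_\psi|^2$, matching exactly the stated weights, with $\widetilde C = \max(C_0/D, 2C_0)$.

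The whole argument is bookkeeping once the Feshbach decomposition is in place. The only genuine subtlety is the cross term $2\,\mathrm{Re}\bigl(f_\psi'(\sigma) \overline{f_\psi(\sigma)} \langle v_{\kappa(\sigma),\hbar}, \partial_\sigma v_{\kappa(\sigma),\hbar}\rangle_{L^2(\widehat a d\tau)}\bigr)$ hidden in $|\partial_\sigma \Pi_\sigma \psi|^2$: differentiating the normalization identity in $\sigma$ would show that this scalar product is of size $\hbar^2$, but for the one-sided bound claimed here we may simply discard it via Young's inequality. The mild asymmetry between the $\hbar^2$-weight on $|f_\psi'|^2$ and the $\hbar$-weight on $R_\hbar |f_\psi|^2$ merely reflects the fact that the normal interval has length $T = D\hbar^{-1}$, so that the extra factor of $\hbar$ is absorbed into the Lebesgue measure $d\tau$ in one case but not in the other.
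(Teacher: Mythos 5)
Your proof is correct, but it takes a genuinely different route from the paper's. The paper keeps the linear-in-$\tau$ factor in the estimate $|\widehat a^{-2}-1|\le C\hbar^2\tau$ and then, for the $|f_\psi'|^2|v_{\kappa(\sigma),\hbar}|^2$ contribution, invokes the Agmon-type decay estimate $\int_0^T\tau\,|v_{\kappa(\sigma),\hbar}|^2\,d\tau\le C$ (a consequence of Proposition~\ref{prop.AgmonuBT}) to produce the $\hbar^2$-weight on $|f_\psi'|^2$; the remaining two terms are handled with the crude $\hbar^2\tau\le D\hbar$. You instead use the crude bound $|\widehat a^{-2}-1|\le C_0\hbar$ from the outset, rely only on the normalization $\|v_{\kappa(\sigma),\hbar}\|_{L^2(\widehat a\,d\tau)}^2=1$ rather than on any decay of $v$, and recover the $\hbar^2$-weight on $|f_\psi'|^2$ in the final form from the observation that integrating a $\tau$-independent quantity over $\widehat{\mathcal V}_T$ picks up a Jacobian factor $T=D\hbar^{-1}$. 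Both routes land at the stated bound; the paper's intermediate, pointwise-in-$\sigma$ estimate is actually sharper by a factor of $\hbar$ on the $|f_\psi'|^2$ term (namely $C\hbar^2|f_\psi'(\sigma)|^2$ versus your $D\hbar|f_\psi'(\sigma)|^2$ after taking out the $C_0\hbar$ prefactor), but that extra sharpness is discarded the moment the right-hand side is expressed as an integral over $\widehat{\mathcal V}_T$, so nothing is lost for the lemma as stated. Your variant has the minor advantage of not requiring the Agmon estimate \eqref{Agma} at all. One small bookkeeping slip: with the four Young factors you introduce and with $\widehat a\le 3/2$, your constant should be more like $\widetilde C=\max(4C_0/D,\,3C_0)$ rather than $\max(C_0/D,\,2C_0)$; this is inconsequential. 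Your parenthetical remark about the cross term is also accurate and matches what the paper proves in Lemma~\ref{lem.normali}, and you are right that it is not needed for the one-sided estimate here.
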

\begin{proof}
We write
\begin{align*}
&\left|\int_{\widehat{\mathcal V}_{T}} \widehat{a}^{-2}|\partial_{\sigma}\psi|^2 \, d\sigma d\tau-\int_{\widehat{\mathcal V}_{T}} |\partial_{\sigma}\psi|^2\, d\sigma d\tau\right|\\
&\leq C\int_{\widehat{\mathcal V}_{T}} \hbar^{2}\tau|\partial_{\sigma}\psi|^2 d\sigma d\tau\\
&\leq 2 C \int_{\widehat{\mathcal V}_{T}} \hbar^{2}\tau\left(|\partial_{\sigma}\Pi_\sigma \psi|^2+|\partial_{\sigma}\Pi_\sigma^\perp  \psi|^2\right) d\sigma d\tau\\
&\leq  \widetilde C \int_{\widehat{\mathcal V}_{T}} \hbar^{2} |f'_\psi(\sigma)|^2+\hbar R_{\hbar}(\sigma)|f_\psi(\sigma)|^2+\hbar|\partial_{\sigma}\Pi_\sigma^\perp  \psi|^2 d\sigma d\tau\,,
\end{align*}
  where we used that
\begin{equation}\label{Agma}
\int_{0}^{T}\tau |v_{\kappa(\sigma), \hbar}|^2 d\tau\leq C
\end{equation}
 (that is a consequence of Proposition \ref{prop.AgmonuBT}) and that $\tau \hbar^2$ may be estimated by $T \hbar^2=D \hbar$.
\end{proof}

\begin{lemma}\label{lem.normali}
We have
\[\int_{0}^{T} v_{\kappa(\sigma), \hbar}\partial_{\sigma} v_{\kappa(\sigma), \hbar}\, \widehat{a}\, d\tau=\mathcal{O}(\hbar^2)\,.\]
\end{lemma}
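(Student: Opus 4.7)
The plan is to differentiate the normalization identity with respect to $\sigma$ and use that the $\sigma$-dependence of the measure $\widehat a\, d\tau$ is itself of order $\hbar^2$.

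Recall that $v_{\kappa(\sigma),\hbar}$ is normalized in $L^2((0,T),\widehat a\, d\tau)$, i.e.
\[
\int_0^T |v_{\kappa(\sigma),\hbar}|^2\, \widehat a(\sigma,\tau)\, d\tau = 1,
\]
and that $\widehat a(\sigma,\tau)=1-\hbar^2 \tau\kappa(\sigma)$, so $\partial_\sigma \widehat a(\sigma,\tau)=-\hbar^2\tau\kappa'(\sigma)$. From the analyticity statement in Lemma~\ref{lem.analyticB} (applied with $B=\hbar^2\kappa(\sigma)$) the map $\sigma\mapsto v_{\kappa(\sigma),\hbar}$ is smooth into $L^2$, so differentiating the normalization with respect to $\sigma$ yields
\[
2\int_0^T v_{\kappa(\sigma),\hbar}\, \partial_\sigma v_{\kappa(\sigma),\hbar}\, \widehat a\, d\tau = -\int_0^T |v_{\kappa(\sigma),\hbar}|^2\, \partial_\sigma \widehat a\, d\tau = \hbar^2 \kappa'(\sigma)\int_0^T \tau |v_{\kappa(\sigma),\hbar}|^2\, d\tau.
\]

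It only remains to bound the last integral by a constant uniform in $\sigma$ and $\hbar$. This is precisely what the decay estimate recalled in \eqref{Agma} provides (a direct consequence of Proposition~\ref{prop.AgmonuBT} applied with $B=\hbar^2\kappa(\sigma)$, noting that $|B|T=\mathcal{O}(\hbar)\ll 1/3$ for $\hbar$ small, so the Agmon bound holds uniformly in $\sigma$ and $\hbar$). Since $\widehat a\geq 1/2$ for $\hbar$ small, we can pass freely between the measures $d\tau$ and $\widehat a\, d\tau$. Since $\kappa'$ is bounded on the compact $\Gamma$, combining these estimates gives
\[
\int_0^T v_{\kappa(\sigma),\hbar}\,\partial_\sigma v_{\kappa(\sigma),\hbar}\,\widehat a\, d\tau = \mathcal{O}(\hbar^2),
\]
uniformly in $\sigma\in[-L,L)$, which is the claim.

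There is no real obstacle here: the lemma is essentially an observation that the Feshbach projector rotates within its one-dimensional range at a rate dictated by the $\hbar^2$-small variation of the reference measure, and the only quantitative ingredient is the tangential-uniform decay estimate $\int_0^T \tau |v_{\kappa(\sigma),\hbar}|^2 d\tau \le C$ which we have already established.
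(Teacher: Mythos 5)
Your proposal is correct and follows essentially the same argument as the paper: differentiate the normalization identity $\int_0^T|v_{\kappa(\sigma),\hbar}|^2\widehat a\,d\tau=1$ in $\sigma$, observe $\partial_\sigma\widehat a=-\hbar^2\tau\kappa'(\sigma)$, and conclude via the uniform decay bound $\int_0^T\tau|v_{\kappa(\sigma),\hbar}|^2\,d\tau\leq C$ from Proposition~\ref{prop.AgmonuBT}. (You also fix a sign in the intermediate identity that the paper has slightly off, which does not affect the $\mathcal O(\hbar^2)$ conclusion.)
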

\begin{proof}
We notice from the normalization of  $ v_{\kappa(\sigma), \hbar}$ that
\[\partial_{\sigma}\int_{0}^T v_{\kappa(\sigma),\hbar} v_{\kappa(\sigma), \hbar}\widehat{a}\,d \tau=0\,,\]
so that
\[2\int_{0}^{T} v_{\kappa(\sigma), \hbar}\partial_{\sigma} v_{\kappa(\sigma), \hbar}\, \widehat{a}\, d\tau=\int_{0}^T v_{\kappa(\sigma),\hbar} v_{\kappa(\sigma), \hbar}(\partial_{\sigma}\widehat{a})\,d \tau\,,\quad\mbox{ with }\partial_{\sigma}\widehat{a}=-\tau \hbar^2\kappa'(\sigma)\,.\]
The conclusion follows from \eqref{Agma}.
\end{proof}

\subsubsection{Upper and lower bounds}
Keeping these preliminaries in mind, the results below are consequences of almost the same computations as in \cite[Chapter 13]{Ray} (see also \cite{MT05}, \cite{DR14} where a similar strategy is used). The first follows from a computation using Lemmas \ref{lem.ahat1} and \ref{lem.normali}.
\begin{lemma}\label{prop.ub}
There exist $C>0$, $\hbar_{0}>0$ such that, for all $\psi\in\widehat{\mathcal{D}}_{T}$ and $\hbar\in(0,\hbar_{0})$, we have
\[\widehat{\mathcal Q}_{\hbar}(\Pi_\sigma \psi)\leq \int_{-L}^L \hbar^4(1+C\hbar^2)|f_\psi'(\sigma)|^2+\left(\hbar^4(1+C\hbar) R_{\hbar}(\sigma)+\lambda_{1}(\mathcal{H}_{\kappa(\sigma),\hbar})+C\hbar^6\right)|f_\psi(\sigma)|^2 d\sigma\,.\]
\end{lemma}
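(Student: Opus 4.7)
The plan is a direct computation on $\Pi_\sigma \psi = f_\psi(\sigma)\, v_{\kappa(\sigma),\hbar}(\tau)$. First I would split $\widehat{\mathcal Q}_{\hbar}(\Pi_\sigma\psi)$ into its $\tau$-part and its $\sigma$-part, and treat them separately. For the $\tau$-part, the factorization and the fact that $v_{\kappa(\sigma),\hbar}$ is the $L^2((0,T),\widehat a\,d\tau)$-normalized groundstate of $\mathcal H_{\kappa(\sigma),\hbar}$ give
\[
\int_{-L}^L |f_\psi(\sigma)|^2\left(\int_0^T |\partial_\tau v_{\kappa(\sigma),\hbar}|^2\,\widehat a\,d\tau - |v_{\kappa(\sigma),\hbar}(0)|^2\right)d\sigma=\int_{-L}^L \lambda_{1}(\mathcal H_{\kappa(\sigma),\hbar})\,|f_\psi(\sigma)|^2\,d\sigma\,,
\]
which supplies exactly the $\lambda_{1}|f_\psi|^2$ term in the target bound.

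For the $\sigma$-part, the first step is to apply Lemma~\ref{lem.ahat1} with $\psi$ replaced by $\Pi_\sigma\psi$. Because $\Pi_\sigma$ is a pointwise (in $\sigma$) projection, $\Pi_\sigma^\perp(\Pi_\sigma\psi)=0$, so the $|\partial_\sigma\Pi_\sigma^\perp\psi|^2$ contribution drops and I can replace $\widehat a^{-2}\widehat a\,d\sigma d\tau$ by $\widehat a\,d\sigma d\tau$ up to an error bounded by $\widetilde C\int(\hbar^{2}|f'_\psi|^2+\hbar R_\hbar|f_\psi|^2)\,d\sigma$. Then I expand
\[
\partial_\sigma(\Pi_\sigma\psi) = f_\psi'(\sigma)\,v_{\kappa(\sigma),\hbar}(\tau) + f_\psi(\sigma)\,\partial_\sigma v_{\kappa(\sigma),\hbar}(\tau)
\]
and integrate $|\partial_\sigma(\Pi_\sigma\psi)|^2$ against $\widehat a\,d\tau$: the pure squared terms produce $|f'_\psi|^2$ (by normalization of $v_{\kappa(\sigma),\hbar}$) and $|f_\psi|^2 R_\hbar(\sigma)$ (by the definition \eqref{defR}), while Lemma~\ref{lem.normali} controls the cross term by $\mathcal O(\hbar^2)\,f'_\psi\bar f_\psi$.

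A simple Cauchy--Schwarz step bounds the cross term by $C\hbar^2|f'_\psi|^2+C\hbar^2|f_\psi|^2$. Multiplying by $\hbar^4$ and re-grouping, the $|f'_\psi|^2$ contributions combine into $\hbar^4(1+C\hbar^2)|f'_\psi|^2$, the $|f_\psi|^2 R_\hbar$ contributions combine into $\hbar^4(1+C\hbar)R_\hbar(\sigma)|f_\psi|^2$, and the leftover $|f_\psi|^2$ terms are absorbed into $C\hbar^6|f_\psi|^2$. Adding the $\tau$-part completes the inequality. The only potential obstacle is the bookkeeping of several small error contributions and verifying that all constants are uniform in $\sigma$ and $\hbar$; the genuine analytic inputs (Lemmas~\ref{lem.ahat1} and \ref{lem.normali}, and the uniform Agmon bound \eqref{Agma} coming from Proposition~\ref{prop.AgmonuBT}) have all been established in advance, so the argument reduces to organised algebra.
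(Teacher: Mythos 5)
Your proposal is correct and follows exactly the route the paper intends: the paper's proof of Lemma~\ref{prop.ub} is precisely ``a computation using Lemmas~\ref{lem.ahat1} and \ref{lem.normali}'', and you carry out that computation — the $\tau$-part reducing to $\lambda_1(\mathcal H_{\kappa(\sigma),\hbar})|f_\psi|^2$ by the groundstate property, the $\sigma$-part via $\partial_\sigma(\Pi_\sigma\psi)=f_\psi' v+f_\psi\partial_\sigma v$ with the cross term controlled by Lemma~\ref{lem.normali} — with the error bookkeeping matching the stated powers of $\hbar$.
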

The next lemma is slightly more delicate. 
\begin{lemma}\label{prop.lb}
There exist $C>0$, $\hbar_{0}>0$ such that, for all $\psi\in\widehat{\mathcal{D}}_{T}$, $\epsilon\in\left(0,\frac{1}{2}\right)$ and $\hbar\in(0,\hbar_{0})$, we have
\begin{multline*}
\widehat{\mathcal{Q}}_{\hbar}(\psi)\geq\!\! \int_{-L}^L\!\! (1-\epsilon)(1-C\hbar^2)\hbar^4|f_\psi '(\sigma)|^2+\left\{\lambda_{1}(\mathcal{H}_{\kappa(\sigma),\hbar})-C(\epsilon^{-1} \hbar^4 R_{\hbar}(\sigma)+\epsilon^{-1}\hbar^8+\hbar^6)\right\}|f_\psi(\sigma)|^2 d \sigma\\
+\int_{-L}^L\!\! (1-\epsilon)(1-C\hbar)\hbar^4\Vert\partial_{\sigma}\Pi_\sigma ^\perp\psi\Vert_{L^2(\widehat a d\tau)}^2+\left\{\lambda_{2}(\mathcal{H}_{\kappa(\sigma),\hbar})-C\epsilon^{-1} \hbar^4 R_{\hbar}(\sigma)-C\epsilon^{-1}\hbar^8\right\}\Vert\Pi_\sigma ^\perp\psi\Vert_{L^2(\widehat a d\tau)}^2d\sigma\,.
\end{multline*}
\end{lemma}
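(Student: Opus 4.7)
The approach is a Born-Oppenheimer/Feshbach decomposition: write $\psi = \Pi_\sigma \psi + \Pi_\sigma^\perp \psi = f_\psi(\sigma)\, v_{\kappa(\sigma),\hbar}(\tau) + \psi_2$ and estimate the $\tau$- and $\sigma$-derivative pieces of $\widehat{\mathcal Q}_\hbar(\psi)$ separately.

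For the $\tau$-derivative term, since $v_{\kappa(\sigma),\hbar}$ is the ground state of $\mathcal{H}_{\kappa(\sigma),\hbar}$, the corresponding transverse quadratic form splits pointwise in $\sigma$: the cross term vanishes because $\mathcal{H}_{\kappa(\sigma),\hbar}$ preserves $\mathrm{span}(v_{\kappa(\sigma),\hbar}) \oplus \mathrm{span}(v_{\kappa(\sigma),\hbar})^\perp$, and the $\Pi_\sigma^\perp$ part satisfies the min-max bound by $\lambda_2(\mathcal{H}_{\kappa(\sigma),\hbar})\|\psi_2\|^2_{L^2(\widehat a\,d\tau)}$. This yields exactly $\lambda_1|f_\psi|^2 + \lambda_2\|\psi_2\|^2$ pointwise in $\sigma$ with no error.

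For the $\sigma$-derivative term $\hbar^4\int \widehat a^{-1}|\partial_\sigma \psi|^2\,d\tau$ (using $\widehat a^{-2}\cdot\widehat a = \widehat a^{-1}$), I expand $\partial_\sigma \psi = f'_\psi v + f_\psi\partial_\sigma v + \partial_\sigma \psi_2$ and square. The three diagonal terms give: $(1-C\hbar^2)|f'_\psi|^2$, using $\widehat a^{-1}-\widehat a = O(\hbar^2\tau)$, the normalization $\int|v|^2\widehat a\,d\tau=1$, and the weighted bound $\int\tau^2|v|^2\,d\tau = O(1)$ from Proposition~\ref{prop.AgmonuBT}; a nonnegative $|f_\psi|^2$ contribution from $|\partial_\sigma v|^2$ that I discard; and $(1-C\hbar)\|\partial_\sigma \psi_2\|^2_{L^2(\widehat a\,d\tau)}$, using $\widehat a^{-1}\geq 1-C\hbar$ (valid since $\hbar^2\tau\leq D\hbar$). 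The three cross terms are treated as follows. (i) Differentiating $\int|v|^2\widehat a\,d\tau=1$ in $\sigma$ yields $\int v\,\partial_\sigma v\,\widehat a\,d\tau = O(\hbar^2)$ (cf.\ Lemma~\ref{lem.normali}), whence $|\int\widehat a^{-1}v\,\partial_\sigma v\,d\tau| = O(\hbar^2)$; Young's inequality then contributes at most $-C\hbar^2|f'_\psi|^2 - C\hbar^2 R_\hbar|f_\psi|^2$. (ii) Differentiating the pointwise orthogonality $\langle \psi_2,v\rangle_{L^2(\widehat a\,d\tau)}=0$ in $\sigma$ gives $|\int v\,\overline{\partial_\sigma \psi_2}\,\widehat a\,d\tau| \leq R_\hbar^{1/2}\|\psi_2\| + C\hbar^2\|\psi_2\|$, with a further $O(\hbar^2)\|\partial_\sigma \psi_2\|_{L^2(\widehat a\,d\tau)}$ correction from switching weight from $\widehat a$ to $\widehat a^{-1}$; Young with parameter $\epsilon$ contributes $-\epsilon|f'_\psi|^2 - C\epsilon^{-1}(R_\hbar+\hbar^4)\|\psi_2\|^2 - C\epsilon^{-1}\hbar^4\|\partial_\sigma \psi_2\|^2$. (iii) Direct Cauchy--Schwarz on $2\operatorname{Re}(f_\psi)\int\widehat a^{-1}\partial_\sigma v\,\overline{\partial_\sigma \psi_2}\,d\tau$, combined with $\|\partial_\sigma v\|^2_{L^2(\widehat a\,d\tau)} = R_\hbar$, yields $-\epsilon\|\partial_\sigma \psi_2\|^2 - C\epsilon^{-1}R_\hbar|f_\psi|^2$.

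Collecting all contributions, multiplying the $\sigma$-derivative part by $\hbar^4$, adding the $\tau$-derivative part, and integrating in $\sigma$, the coefficients reorganize into precisely those of the stated inequality: the $(1-\epsilon)$ factors come from the Young parameters in (ii) and (iii), the $(1-C\hbar^2)$ and $(1-C\hbar)$ factors from the diagonal weight corrections, and the $-C\epsilon^{-1}\hbar^4R_\hbar$, $-C\epsilon^{-1}\hbar^8$, $-C\hbar^6$ error terms from the cross-term contributions (after multiplication by $\hbar^4$; the residual $\epsilon^{-1}\hbar^8\|\partial_\sigma \psi_2\|^2$ from step (ii) is absorbed into $(1-\epsilon)(1-C\hbar)\hbar^4\|\partial_\sigma \psi_2\|^2$ for $\hbar$ small). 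The main obstacle is the careful bookkeeping of the $\widehat a$-weighted versus unweighted inner products, together with the fact that $\partial_\sigma v$ is \emph{not} orthogonal to $v$ in $L^2(\widehat a\,d\tau)$ (only up to $O(\hbar^2)$ by Lemma~\ref{lem.normali}), which forces the systematic use of differentiating the normalization and orthogonality relations in $\sigma$ rather than invoking naive orthogonality.
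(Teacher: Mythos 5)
Your proposal follows essentially the same Born--Oppenheimer/Feshbach strategy as the paper: min-max in $\tau$ for the transverse part (the cross term vanishing because $v_{\kappa(\sigma),\hbar}$ is an eigenfunction), and control of the tangential cross terms through Lemma~\ref{lem.normali}, the differentiated orthogonality relation, the Agmon bound $\int\tau^j|v_{\kappa(\sigma),\hbar}|^2\,d\tau=\mathcal O(1)$, and the quantity $R_\hbar$. The only organizational difference is that the paper first decomposes $\partial_\sigma\psi=\Pi_\sigma\partial_\sigma\psi+\Pi_\sigma^\perp\partial_\sigma\psi$ (exactly orthogonal, hence no cross term) and then commutes $\partial_\sigma$ past $\Pi_\sigma$, paying only $\|[\partial_\sigma,\Pi_\sigma]\psi\|\leq (2R_\hbar^{1/2}+C\hbar^2)\|\psi\|$, whereas you expand $\partial_\sigma(f_\psi v+\psi_2)$ directly and estimate three cross terms; the paper also first trades the weight $\widehat a^{-1}$ for $\widehat a$ via Lemma~\ref{lem.ahat1} before decomposing.

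One step in your bookkeeping does not close as written. In cross term (ii) the weight-switching correction produces, after Young with parameter $\epsilon$ and multiplication by $\hbar^4$, a term $-C\epsilon^{-1}\hbar^8\|\partial_\sigma\psi_2\|^2_{L^2(\widehat a\,d\tau)}$, which you propose to absorb into $(1-\epsilon)(1-C\hbar)\hbar^4\|\partial_\sigma\psi_2\|^2$ ``for $\hbar$ small''. This absorption is not uniform in $\epsilon$: the lemma is asserted for all $\epsilon\in(0,\tfrac12)$ with $\hbar_0$ independent of $\epsilon$, and for $\epsilon\ll\hbar^4$ the term $\epsilon^{-1}\hbar^8\|\partial_\sigma\psi_2\|^2$ dominates $\hbar^4\|\partial_\sigma\psi_2\|^2$. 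The fix is one line: estimate that particular piece as $2|f'_\psi|\,C\hbar^2\|\partial_\sigma\psi_2\|\leq C\hbar^2|f'_\psi|^2+C\hbar^2\|\partial_\sigma\psi_2\|^2$, whose two contributions are absorbed into the $(1-C\hbar^2)$ and $(1-C\hbar)$ factors respectively after multiplication by $\hbar^4$. (The commutator organization of the paper avoids this issue altogether, since its $\epsilon^{-1}$ error involves only $\|\psi\|^2$ and not $\|\partial_\sigma\Pi_\sigma^\perp\psi\|^2$.) With that correction your argument is complete.
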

\begin{proof}
First, we use Lemma \ref{lem.ahat1} to get that
\begin{equation}\label{eq.Qapp}
\widehat{\mathcal{Q}}_{\hbar}(\psi)\geq \widehat{\mathcal{Q}}^\mathsf{app}_{\hbar}(\psi)- \widetilde C\hbar^4 \int_{\widehat{\mathcal V}_{T}} \hbar^{2} |f_\psi '(\sigma)|^2+\hbar R_{\hbar}(\sigma)|f_\psi(\sigma)|^2+\hbar |\partial_{\sigma}\Pi_\sigma^\perp  \psi|^2 d\sigma d\tau\,,
\end{equation}
with
\[ \widehat{\mathcal{Q}}^\mathsf{app}_{\hbar}(\psi)=\int_{-L}^L \int_{0}^{T}\hbar^4|\partial_\sigma \psi|^2\widehat a \,d\tau d\sigma+\int_{-L}^L \left\{\int_{0}^{T}|\partial_\tau \psi|^2\widehat a \, d\tau- |\psi(\sigma,0)|^2\right\}\,d\sigma\,.\]
Then, we have the orthogonal decomposition
\begin{equation}\label{eq.ortho}
\int_{0}^T |\partial_{\sigma}\psi|^2\,\widehat{a}\,d\tau=\int_{0}^T |\Pi_\sigma \partial_{\sigma}\psi|^2\,\widehat{a}\,d\tau+\int_{0}^T |\Pi_\sigma ^\perp\partial_{\sigma}\psi|^2\,\widehat{a}\,d\tau\,.
\end{equation}
We have also the commutator identity
\begin{multline*}
[\partial_{\sigma}, \Pi_\sigma ]\psi=\langle\psi,\partial_{\sigma} v_{\kappa(\sigma), \hbar} \rangle_{L^2((0,T),\widehat a d\tau)} v_{\kappa(\sigma), \hbar} +\langle\psi,  v_{\kappa(\sigma), \hbar} \rangle_{L^2((0,T),\widehat a d\tau)}  \partial_{\sigma}v_{\kappa(\sigma), \hbar}\\
-\kappa'(\sigma)\hbar^{2}\left(\int_{0}^T \psi (\cdot,\tau) \, \tau v_{\kappa(\sigma), h}(\tau) \, d\tau\right) v_{\kappa(\sigma), \hbar}\,,
\end{multline*}
so that we get, by the Cauchy-Schwarz inequality, the estimate
\begin{equation}\label{eq.commutator}
\|[\partial_{\sigma}, \Pi_\sigma]\psi\|_{L^2((0,T),\widehat a d\tau)}\leq 2R_{\hbar}(\sigma)^{\frac{1}{2}}\|\psi\|_{L^2((0,T),\widehat a d\tau)}+C\hbar^2\|\psi\|_{L^2((0,T),\widehat a d\tau)}\,.
\end{equation}
For all $\epsilon\in(0,1)$, we get, by using the classical inequality $|a-b|^2\geq (1-\epsilon)a^2-\epsilon^{-1}b^2$ and \eqref{eq.ortho}, 
\[\int_{0}^T |\partial_{\sigma}\psi|^2\,\widehat{a}\,d\tau\geq (1-\epsilon)\left\{ \int_{0}^T |\partial_{\sigma}\Pi_\sigma \psi|^2\,\widehat{a}\,d\tau+\int_{0}^T |\partial_{\sigma}\Pi_\sigma ^\perp\psi|^2\,\widehat{a}\,d\tau\right\} -2\epsilon^{-1}\int_{0}^T |[\partial_{\sigma},\Pi_\sigma ]\psi|^2\,\widehat{a}\,d\tau\,.\]
With \eqref{eq.commutator}, we get
\begin{multline}\label{eq.1}
\int_{0}^T \hbar^4|\partial_{\sigma}\psi|^2\,\widehat{a}\,d\tau\geq (1-\epsilon)\hbar^4\left\{ \int_{0}^T |\partial_{\sigma}\Pi_\sigma \psi|^2\,\widehat{a}\,d\tau+\int_{0}^T |\partial_{\sigma}\Pi_\sigma ^\perp\psi|^2\,\widehat{a}\,d\tau\right\} \\
-C\epsilon^{-1}(\hbar^4 R_{\hbar}(\sigma)+\hbar^8)\|\psi\|^2_{L^2((0,T),\widehat a d\tau)}\,.
\end{multline}
By computing and using Lemma \ref{lem.normali} to deal with the double product, we have
\begin{equation}\label{eq.2}
\int_{0}^T |\partial_{\sigma}\Pi_\sigma \psi|^2\,\widehat{a}\,d\tau\geq(1-C\hbar^2)|f_\psi '(\sigma)|^2+(R_{\hbar}(\sigma)-C\hbar^2)|f_\psi(\sigma)|^2\,.
\end{equation}
Moreover we have, by an orthogonal decomposition and the min-max principle,
\begin{equation}\label{eq.3}
\int_{0}^{T}|\partial_\tau \psi|^2\widehat a \, d\tau- |\psi(\sigma,0)|^2\geq \lambda_{1} (\mathcal{H}_{\kappa(\sigma),\hbar}) |f_\psi(\sigma)|^2+\lambda_{2} (\mathcal{H}_{\kappa(\sigma),\hbar})\|\Pi_\sigma ^\perp\psi\|^2_{L^2((0,T),\widehat a d\tau)}
\end{equation}
The conclusion follows from \eqref{eq.Qapp}, \eqref{eq.1}, \eqref{eq.2}, \eqref{eq.3} and by integrating with respect to $\sigma$.
\end{proof}
\subsubsection{End of the proof of Proposition \ref{prop.normalform}}

We apply Lemmas \ref{prop.ub} and \ref{prop.lb} with $\epsilon=\hbar^{2}$. Then, we use Lemmas \ref{lem:1DL} and \ref{lem:H0b} and Proposition \ref{lem:H0b;l} to deduce that
\[\lambda_{1}(\mathcal{H}_{\kappa(\sigma),\hbar})=-1-\hbar^2\kappa(\sigma)+\mathcal{O}(\hbar^4)\,,\]
and that there exist $h_{0}>0$ and $C>0$ such that, for all $h\in(0,h_{0})$,
\[\lambda_{2}(\mathcal{H}_{\kappa(\sigma),\hbar})\geq -C\hbar>-\frac{\epsilon_{0}}{2}\,.\]
Then we notice that $R_{\hbar}(\sigma)$ (introduced in \eqref{defR}) satisfies $R_\hbar(\sigma)=\mathcal{O}(\hbar^4)$ thanks to Lemma~\ref{lem.h12} and the relation $B=\kappa(\sigma) \hbar^2$. The conclusion comes from the min-max principle (see \cite[Chapter 13]{Ray}): the lower bounds in Theorem \ref{prop.weyl} follow from Lemma \ref{prop.ub} and the upper bounds from Lemma \ref{prop.lb}.

\begin{remark}
One can see that Proposition \ref{prop.normalform} only requires that the boundary is $\mathcal{C}^2$ and that its curvature is Lipschitzian (that is an admissible boundary of order at least $3$ in the sense of \cite{PP-eh}). This result matches with the one of \cite{PP-eh}. Moreover, our effective Hamiltonians provide a uniform approximation valid for all the eigenvalues less than the energy level $-\epsilon_{0}h$ and not only for an $h$-independent number of low-lying eigenvalues. The underlying operator reduction follows from the general arguments often used in the Born-Oppenheimer framework. One can reasonably hope to extend the analysis to higher dimensional situations and improve the spectral approximations of \cite{PP-eh} obtained in the case of admissible boundaries of order $3$.
\end{remark}

\subsection*{Acknowledgments}
A.K. is supported by a grant from Lebanese University. This work was
partially supported by the ANR (Agence Nationale de la Recherche),
project {\sc Nosevol}\break  n$^{\rm o}$ ANR-11-BS01-0019 and by the
Henri Lebesgue Center (programme  \enquote{Investissements d'avenir} \break -- n$^{\rm
  o}$ ANR-11-LABX-0020-01). B.H. is grateful to the Erwin Schr\"odinger Institute in Vienna where this paper was achieved.

\end{document}